\title[Extending automorphisms of the genus-$2$ surface over the $3$-sphere]
{Extending automorphisms of the genus-$2$ surface over the $3$-sphere}
\author{Kenta Funayoshi}
\address{
Department of Mathematics \newline
\indent Hiroshima University, 1-3-1 Kagamiyama, Higashi-Hiroshima, 739-8526, Japan}
\email{kfunayoshi@yahoo.co.jp}
\author{Yuya Koda}
\thanks{The second author is supported in part by JSPS KAKENHI Grant
 Numbers 15H03620, 17K05254, 17H06463, and JST CREST Grant Number JPMJCR17J4.}
\address{
Department of Mathematics \newline
\indent Hiroshima University, 1-3-1 Kagamiyama, Higashi-Hiroshima, 739-8526, Japan}
\email{ykoda@hiroshima-u.ac.jp}
\theoremstyle{plain}
\newtheorem*{theorem*}{Theorem}
\newtheorem*{lemma*} {Lemma}
\newtheorem*{corollary*} {Corollary}
\newtheorem*{proposition*}{Proposition}
\newtheorem*{conjecture*}{Conjecture}
\newtheorem{theorem}{Theorem}[section]
\newtheorem{lemma}[theorem]{Lemma}
\newtheorem{corollary}[theorem]{Corollary}
\newtheorem{claim}{Claim}
\newtheorem*{subclaim}{Subclaim}
\theoremstyle{remark}
\newtheorem*{definition}{Definition}
\newtheorem*{remark}{Remark}
\newtheorem*{problem}{Problem}
\theoremstyle{definition}
\newtheoremstyle{citing}
  {}
  {}
  {\itshape}
  {}
  {\bfseries}
  {.}
  {.5em}
  {\thmnote{#3}}
\theoremstyle{citing}
\newtheorem*{citingtheorem}{} 
\newcommand{\Integer}{\mathbb{Z}}
\newcommand{\Real}{\mathbb{R}}
\newcommand{\MCG}{\mathrm{MCG}}
\newcommand{\Homeo}{\mathrm{Homeo}}
\newcommand{\rel}{\mathrm{rel}}
\newcommand{\Nbd}{\operatorname{Nbd}}
\newcommand{\Cl}{\operatorname{Cl}}
\newcommand{\Int}{\operatorname{Int}}
\begin{document}

\maketitle

\begin{abstract}
An automorphism $f$ of a closed orientable surface $\Sigma$ 
is said to be extendable over the 3-sphere $S^3$ 
if $f$ extends to an automorphism of the 
pair $(S^3, \Sigma)$ with respect to some embedding 
$\Sigma \hookrightarrow S^3$. 
We prove that if an automorphism of a genus-2 surface $\Sigma$ 
is extendable over $S^3$, then 
$f$ extends to an automorphism of the 
pair $(S^3, \Sigma)$ with respect to 
an embedding $\Sigma \hookrightarrow S^3$ 
such that $\Sigma$ bounds genus-2 handlebodies on both sides.  
The classification of essential annuli in the exterior of genus-2 
handlebodies embedded in $S^3$ due to Ozawa and the second author 
plays a key role. 
\end{abstract}

\vspace{1em}

\begin{small}
\hspace{2em}  \textbf{2010 Mathematics Subject Classification}: 
57M60; 57S25

%

\hspace{2em} 
\textbf{Keywords}:
mapping class group, handlebody, essential annulus, JSJ-decomposition
\end{small}

\section*{Introduction}

Let $\Sigma_g$ be the closed orientable surface of genus $g$. 
An automorphism $f$ of $\Sigma_g$ is said to be {\it null-cobordant} if 
$f$ extends to an automorphism of a compact orientable 3-manifold $M$ 
with respect to an identification 
$\Sigma_g = \partial M$. 
In Bonahon \cite{Bon83}, it is proved that if $f$ is an orientation-preserving, null-cobordant, periodic automorphism of $\Sigma_g$, 
then the above $M$ can be chosen to be a handlebody. 
It is also showed in the same paper that the same consequence holds for 
an arbitrary irreducible automorphism of $\Sigma_2$.  
These results give partial answers to the problem: 
Given an automorphism $f$ of $\Sigma_g$ that extends to an automorphism 
of a certain 3-manifold $M$ with $\partial M = \Sigma_g$, find 
the ``simplest'' 3-manifold among such $M$ for $f$. 
In fact, a handlebody of genus $g$ is definitely the ``simplest'' 3-manifold with the boundary 
$\Sigma_g$. 

An automorphism $f$ of $\Sigma_g$  is said to be {\it extendable over $S^3$} if 
$f$ extends to an automorphism of the pair $(S^3, \Sigma_g)$ 
with respect to an embedding $\Sigma_g \hookrightarrow S^3$.
In particular, if we can choose the above embedding $\Sigma_g \hookrightarrow S^3$ 
to be {\it standard} 
(i.e. an embedding such that $\Sigma_g$ bounds handlebodies on both sides), 
we say that $f$ is {\it standardly extendable over $S^3$}. 
An automorphism of $\Sigma_g$ extendable over $S^3$ is clearly null-cobordant, but the converse is false. 
Indeed, the Dehn twist along a non-separating simple closed curve on $\Sigma_g$ 
is null-cobordant, but not extendable over $S^3$ (see Section \ref{sec:Automorphisms extendable over S3}).
As an analogy of the above-mentioned problem for null-cobordant automorphisms, 
the following problem naturally arises:  
\begin{problem}
Given an automorphism $f$ of $\Sigma_g$ that 
extends to an automorphism of the pair $(S^3, \Sigma_g)$ 
with respect to an embedding $\Sigma_g \hookrightarrow S^3$, 
find the ``simplest'' such an embedding $\Sigma_g \hookrightarrow S^3$ 
for $f$. 
\end{problem}
The best candidate for the ``simplest'' embedding  $\Sigma_g \hookrightarrow S^3$ 
is definitely a standard one. There is a series of studies by 
Guo-Wang-Wang \cite{WWZ16}, Guo-Wang-Wang-Zhang \cite{GWWZ} and 
Wang-Wang-Zhang-Zimmermann \cite{WWZZ13, WWZZ15, WWZZ18} 
considering finite subgroups of the automorphism group $\Homeo(\Sigma_g)$ 
of $\Sigma_g$ 
that extend to subgroups of $\Homeo (S^3, \Sigma_g )$ 
with respect to some $\Sigma_g \hookrightarrow S^3$.
A direct consequence of their results is that 
a periodic automorphism of $\Sigma_2$ that is extendable over $S^3$ 
is actually standardly extendable over $S^3$.  
In the present paper, we show that we can remove the periodicity condition here. 
In fact, we prove the following: 
\begin{citingtheorem}[Theorem \ref{theo-main}]
Let $f$ be an automorphism of a closed orientable surface of genus two. 
If $f$ is extendable over $S^3$, then $f$ is standardly extendable over $S^3$. 
\end{citingtheorem}
It is easily seen that the same fact is valid for a closed surface of genus less than two as well, 
see Section \ref{sec:Automorphisms extendable over S3}. 
The case of higher genera remains open. 
The classification of essential annuli in the exteriors of 
genus-two handlebodies embedded in $S^3$ 
(see Section \ref{sec:Essential annuli in the exterior of a handlebody of genus two in S3}) given by \cite{KO} 
plays a key role in our proof of Theorem \ref{theo-main}.  
\vspace{1em}

Throughout the paper, we will work in the piecewise linear category. 
Any surfaces in a 3-manifold are always assumed to be properly embedded, and their intersection is transverse and minimal up to isotopy. 
For convenience, we will not distinguish 
surfaces, compression bodies, e.t.c.  
from their isotopy classes in their notation.
$\Nbd(Y)$ will denote a regular neighborhood of $X$, $\Cl(X)$ the closure of $X$,  and 
$\Int (X)$ the interior of $X$ for a subspace $X$ of a space, where the ambient space will always be clear from the context. 
The number of components of $X$ is denoted by $\# X$. 
Let $M$ be a 3-manifold, and let $L \subset M$ be a submanifold, or a graph. 
When $L$ is 1 or 2-dimensional, we write 
$E(L) = \Cl ( M \setminus \Nbd (L))$. 
When $L$ is of 3-dimension, we write 
$E(L) = \Cl ( M \setminus L)$.

\section{Preliminaries}
\label{sec:Preliminaries}

Let $L_1, L_2 , \ldots, L_n, R$ be possibly empty subspaces 
of a compact orientable 3-manifold $M$. 
We will denote by $\Homeo (M, L_1, L_2 , \ldots, L_n ~\rel~ R )$ 
the group of 
automorphisms of $M$ which {map} $L_i$ onto 
$L_i$ for any $i=1 , 2 , \ldots , n$ and 
which {are} {the} identity on $R$. 
The {\it mapping class group}, denoted by 
$\MCG (M , L_1 , L_2 , \ldots, L_n ~\rel~ R )$, 
is defined to be the group of isotopy classes of 
elements of $\Homeo (M , L_1 , L_2 , \ldots, L_n ~\rel~ R )$. 
When $R = \emptyset$, we will drop $\rel~R$. 
The ``plus" subscripts, for instance in 
$\Homeo_+ (M, L_1, L_2 , \ldots, L_n  ~\rel~ R )$ and 
$\MCG_+ (M, L_1, L_2 , \ldots, L_n ~\rel~ R )$, 
indicate the subgroups of 
$\Homeo (M, L_1, L_2 , \ldots, L_n  ~\rel~ R )$ and 
$\MCG (M, L_1, L_2 , \ldots, L_n ~\rel~ R )$, 
respectively, consisting of 
orientation-preserving automorphisms 
(or their classes) of $M$.

\subsection{Handlebodies}
\label{subsec:Handlebodies}

Let $V$ be a handlebody. 
A simple closed curve $c$ on $\partial V$ is 
said to be {\it primitive} if there exists a disk $D$ properly embedded in $V$ 
such that the two loops $c$ and $\partial D$ intersect transversely in a single point. 
Suppose that $V$ is embedded in $S^3$ so that the exterior $W := E(V)$ is also 
a handlebody. 
Then a disk $D$ properly embedded in $V$ is said to be {\it primitive} 
if $\partial D$ is primitive in $W$. 

\begin{lemma}\label{2.2}
Let $V$ be a handlebody of genus two, and $E$ be a separating essential disk in $V$. 
Let $c$ be a simple closed curve on $\partial V$ with $\partial E \cap c = \emptyset$ 
and $[c] \neq 1 \in \pi_1(V)$. 
Then there exists a unique non-separating disk $D$ in $V$ with $\partial D \cap c  =\emptyset$. 
Further, this disk $D$ is disjoint from $E$. 
\end{lemma}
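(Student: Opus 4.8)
The plan is to cut $V$ along $E$ and reduce everything to the geometry of a solid torus. Since $E$ is separating and essential, cutting $V$ along $E$ produces two solid tori $V_1,V_2$, and $\partial E$ cuts $\partial V$ into two once-punctured tori $T_1,T_2$ with $T_i\subset\partial V_i$. As $\pi_1(V)=\pi_1(V_1)*\pi_1(V_2)$, the hypothesis $[c]\neq 1\in\pi_1(V)$ is equivalent to $[c]\neq 1$ in $\pi_1$ of the piece containing $c$; in particular $c$ is essential on $\partial V$ and not parallel to $\partial E$. Since $c\cap\partial E=\emptyset$, I may assume $c\subset T_1$, and then $c$ is an essential simple closed curve on the \emph{torus} $\partial V_1$ that is not a meridian of $V_1$ (a meridian slope would give $[c]=1$ in $\pi_1(V_1)\hookrightarrow\pi_1(V)$).

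For existence I take $D$ to be a meridian disk of the solid torus $V_2$, isotoped so that $\partial D$ lies in the interior of $T_2$. Cutting $V_2$ along $D$ turns it into a ball, so $V$ cut along $D$ is $V_1$ with a ball glued along the disk $E$, which is again a solid torus; hence $D$ is non-separating. Since $\partial D\subset T_2$, it is disjoint from $c\subset T_1$ and, after the isotopy, from $E$. This $D$ is the candidate, and it will turn out to be the only one.

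For uniqueness and disjointness from $E$, let $D$ be an arbitrary non-separating disk with $\partial D\cap c=\emptyset$, and put $D$ and $E$ in minimal position. Closed curves of $D\cap E$ are removed by the usual innermost-disk argument using irreducibility of $V$; these isotopies are supported in balls and never meet $\partial D$ or $c$. The heart of the matter is the claim that \emph{every component $P$ of $D\cap V_1$ is $\partial$-parallel in $V_1$}. Indeed, $P$ is a properly embedded disk in the solid torus $V_1$ whose boundary lies on $\partial V_1=T_1\cup E$ and is therefore disjoint from $c$; were $P$ essential, $\partial P$ would be a meridian of $V_1$, and then $\partial P$ and $c$ would be two disjoint essential curves on the torus $\partial V_1$, hence isotopic there, forcing $c$ to be a meridian and $[c]=1$ — a contradiction. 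Granting the claim, each $\partial P$ bounds a disk $Q\subset\partial V_1$; since $\partial Q$ is disjoint from the essential curve $c$, the disk $Q$ is disjoint from $c$ as well. Whenever $D\cap E\neq\emptyset$ such a component $P$ exists and carries an arc of $D\cap E$, so choosing $Q$ innermost the associated parallelism ball lets me isotope $P$ across $E$ and strictly reduce $\#(D\cap E)$, with the whole isotopy supported in $Q$ and hence disjoint from $c$. Iterating, I reach a disk disjoint from $E$ and lying in $V_2$; being non-separating it is a meridian disk of $V_2$. Since meridian disks of a solid torus are mutually isotopic and a properly embedded disk in a handlebody is determined up to isotopy by its boundary curve, this disk — and therefore the original $D$ — is isotopic to the candidate of the previous paragraph, which also establishes the final assertion $D\cap E=\emptyset$.

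The delicate point throughout is to perform the reductions through isotopies that never push $\partial D$ across $c$, and this is exactly where the hypothesis $[c]\neq 1$ is used: once one knows (via the claim) that the relevant compressing and parallelism disks have boundary disjoint from the \emph{essential} curve $c$ on the torus $\partial V_1$, they must miss $c$ entirely, so the isotopies stay in the required class. I expect this bookkeeping — rather than any single hard step — to be the main obstacle to a clean write-up.
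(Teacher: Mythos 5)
Your proof is correct, and while the existence half coincides with the paper's (a meridian disk of the solid torus on the other side of $E$ from $c$), your uniqueness argument takes a genuinely different route. The paper compares an arbitrary non-separating disk $D'$ directly with the candidate disk $D$: if $D\cap D'=\emptyset$, then $D'$ is a meridian disk of the solid torus obtained by cutting $V$ along $D$, which must meet $c$ since $[c]\neq 1$ injects into the fundamental group of that solid torus; if $D\cap D'\neq\emptyset$, an outermost subdisk of $D'$ is shown to force an intersection with $c$, which is impossible since $D'$ misses $c$. You instead compare the unknown disk with the \emph{separating} disk $E$: you classify the components of $D\cap V_1$ as meridian or boundary-parallel disks in the solid torus $V_1$, rule out the meridian case via the fact that disjoint essential curves on the torus $\partial V_1$ are isotopic (which would make $c$ a meridian), and then push $D$ off $E$ through parallelism balls whose trace disks $Q$ miss $c$, landing on a meridian disk of $V_2$. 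Each approach buys something: the paper's is shorter, while yours produces the disjointness from $E$ as a by-product of the same isotopies and keeps explicit control that every isotopy avoids $c$, which is the form in which the lemma is actually applied later (e.g., to conclude $a_1=a_1'$ in the Type 2-1 uniqueness lemma). One step you elide: after the iteration terminates you assert the disk lies in $V_2$, but a priori it could lie in $V_1$; this is excluded by your own claim, since a boundary-parallel disk in $V_1$ is, inside $V$, either inessential or isotopic to $E$, hence separating, contradicting non-separation of $D$ --- a one-line fix, not a genuine gap.
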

\begin{proof}
The disk $E$ cuts $V$ into two solid tori $X_1$ and $X_2$. 
Without loss of generality, we can assume that 
$c \subset X_2$. 
Then a meridian disk $D$ in $X_1$ with $\partial D \subset \partial V$ is disjoint from 
$c$, and furthermore, disjoint from $E$. 

We show the uniqueness of $D$. 
Assume that there exists a non-separating disk $D'$ in $V$ 
that is disjoint from $c$ and not isotopic to $D$. 
If $D \cap D' = \emptyset$, then $D'$ is a meridian disk of a solid torus $V$ cut off by $D$. 
Since $[c] \neq 1 \in \pi_1(V)$, $D'$ intersects $c$. 
This is a contradiction. 
Suppose that $D \cap D^\prime \neq \emptyset$.
Then any outermost subdisk of $D'$ cut off by $D \cap D'$ must intersect $c$, 
a contradiction again. 
\end{proof}

\begin{corollary}
\label{primitive}
Let $V$ be a handlebody of genus two, and $c$ be a primitive curve on $\partial V$. 
Then there exists a unique non-separating disk $D$ in $V$ with 
$\partial D \cap c  = \emptyset$. 
\end{corollary}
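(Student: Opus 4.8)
The plan is to deduce the statement directly from Lemma \ref{2.2}. To invoke that lemma I must supply, for the given primitive curve $c$, a separating essential disk $E$ in $V$ with $\partial E \cap c = \emptyset$, and I must check that $[c] \neq 1 \in \pi_1(V)$. Once these two ingredients are in place, Lemma \ref{2.2} provides the desired non-separating disk $D$ with $\partial D \cap c = \emptyset$ together with its uniqueness, which is exactly the assertion of the corollary.

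First I would verify that $[c] \neq 1$. By the definition of primitivity there is a disk $D_0$ properly embedded in $V$ meeting $c$ transversely in a single point. The algebraic intersection number of $c$ with $D_0$ is therefore $\pm 1$, so $c$ is not null-homologous in $V$; in particular $[c] \neq 1 \in \pi_1(V)$.

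Next, and this is the crux, I would produce the separating disk $E$. Since $c$ and $\partial D_0$ are simple closed curves on $\partial V$ meeting transversely in exactly one point, a regular neighborhood $T_0$ of $c \cup \partial D_0$ in $\partial V$ is a once-punctured torus, with a single boundary curve $\gamma$. Passing to a three-dimensional neighborhood, I would argue that $T := \Nbd(c \cup D_0)$ is a solid torus: it is obtained from the ball $\Nbd(D_0)$ by attaching the single $1$-handle $\Nbd(\alpha)$, where $\alpha$ is the arc of $c$ lying outside $\Nbd(D_0)$, and $c$ runs exactly once through the meridian $D_0$ of $T$, so $c$ is a core of $T$. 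The frontier $E := \Cl(\partial T \setminus \partial V)$ then caps off the once-punctured torus $T_0$ to form the torus $\partial T$, so $E$ is a disk with $\partial E = \gamma$. Because $\gamma$ separates $\partial V$ into two once-punctured tori it is essential on $\partial V$, and $E$ cuts $V$ into the solid torus $T$ and its complementary solid torus; hence $E$ is a separating essential disk. Finally, since $c$ lies in the interior of $T_0$ we have $\partial E \cap c = \gamma \cap c = \emptyset$.

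With $E$ and the inequality $[c] \neq 1$ established, Lemma \ref{2.2} applies verbatim and yields the unique non-separating disk $D$ in $V$ with $\partial D \cap c = \emptyset$, completing the proof. I expect the only genuine work to lie in the neighborhood analysis of the previous paragraph: namely, confirming that $\Nbd(c \cup D_0)$ is a solid torus and that its frontier in $V$ is a single properly embedded disk, rather than some more complicated surface.
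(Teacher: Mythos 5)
Your proposal is correct and follows exactly the paper's argument: the paper likewise sets $E := \Cl(\partial \Nbd(c \cup D_0) \setminus \partial V)$ and invokes Lemma \ref{2.2}. You merely fill in details the paper leaves implicit (that $[c] \neq 1$ via intersection number with $D_0$, and that the frontier of the solid torus $\Nbd(c \cup D_0)$ is a single essential separating disk), all of which check out.
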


\begin{proof}
Since $c$ is primitive, there exists a disk $D_0$ properly embedded in $V$ 
such that the two loops $c$ and $\partial D_0$ intersect transversely in a single point. 
Then $E := \Cl (\partial \Nbd(c \cup D_0) \setminus \partial V )$ is 
a separating disk in $V$ disjoint from $c$. 
The assertion thus follows from Lemma \ref{2.2}. 
\end{proof}

\begin{lemma}
\label{case2}
Let $V$ be a handlebody of genus two, and $E$ be an essential separating disk in $V$. 
Let $X_1$ and $X_2$ be the solid tori $V$ cut off by $E$, and $c_i$ $(i=1,2)$ be an essential simple closed curve on 
$\partial V \cap \partial X_i$ satisfying 
$[c_i] \neq 1 \in \pi _1(V)$. 
Then $E$ is the unique essential separating disk in $V$ disjoint from $c_1 \cup c_2$. 
\end{lemma}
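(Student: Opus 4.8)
The plan is to show that any essential separating disk $E'$ in $V$ with $E' \cap (c_1 \cup c_2) = \emptyset$ is isotopic to $E$. Write $D_i$ for a meridian disk of the solid torus $X_i$, viewed as a non-separating disk in $V$; since $D_1 \cup D_2$ cuts $V$ into a ball, $D_1$ and $D_2$ are non-isotopic. The curves $c_1, c_2$ are disjoint (being separated by $E$) and each is disjoint from $\partial E$, so Lemma \ref{2.2} applies to each: the unique non-separating disk of $V$ disjoint from $c_1$ is $D_2$, and the unique one disjoint from $c_2$ is $D_1$. The whole argument is then driven by the impossibility of having $D_1$ isotopic to $D_2$.

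First I would isotope $E'$, through isotopies of $V$ preserving $c_1 \cup c_2$, so as to minimize $\#(E \cap E')$. Circle components of $E \cap E'$ are removed in the standard way: an innermost one bounds a disk on each of $E$ and $E'$, these cobound a ball, and that ball is disjoint from $c_1 \cup c_2$, since a component of $c_1 \cup c_2$ lying inside it would be null-homotopic in $V$, contradicting $[c_i] \neq 1 \in \pi_1(V)$; hence $E'$ may be pushed across it. Thus in minimal position $E \cap E'$ consists only of arcs, and the goal becomes to show that this set is empty.

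Assume $E \cap E' \neq \emptyset$ and let $\Delta \subset E'$ be an outermost disk, so $\Delta \cap E$ is a single arc $\alpha$ and $\partial \Delta = \alpha \cup \beta$ with $\beta \subset \partial V$; say $\Delta \subset X_1$, so that $\beta$ lies in the once-punctured torus $\partial V \cap \partial X_1$ and, like all of $E'$, is disjoint from $c_1$. If $\beta$ were boundary-parallel in $\partial V \cap \partial X_1$ it would cut off a disk there which, again because $[c_1] \neq 1$, is disjoint from $c_1$, allowing a further reduction of $\#(E \cap E')$; so $\beta$ is essential, hence non-separating, in the once-punctured torus. Surgering $E$ along $\Delta$ produces two disks whose boundaries are obtained from $\partial E$ by a band move along $\beta$; both are disjoint from $c_1 \cup c_2$, and since $\beta$ is a non-separating arc, at least one of them, say $F$, has boundary non-separating in $\partial V$, so $F$ is a non-separating disk in $V$. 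Now $F$ is disjoint from $c_1$ and from $c_2$, so Lemma \ref{2.2} forces $F$ to be isotopic to $D_2$ and also to $D_1$, whence $D_1 \simeq D_2$, a contradiction. (If instead $\Delta \subset X_2$, the same argument with the roles of $c_1$ and $c_2$ interchanged applies.) Therefore $E \cap E' = \emptyset$, so $E'$ lies in $X_1$ or $X_2$, say $X_1$, with $\partial E'$ on the boundary torus $\partial X_1$. If $\partial E'$ were essential on $\partial X_1$ it would be the meridian and $E'$ would be non-separating in $V$, contradicting that $E'$ separates; hence $\partial E'$ bounds a disk in $\partial X_1$. That disk cannot lie entirely in $\partial V$ (else $E'$ is inessential), so it contains the copy of $E$, i.e. $\partial E'$ is parallel to $\partial E$ in $\partial V$, and $E'$ is isotopic to $E$, as required.

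The step I expect to be the main obstacle is justifying that the band move along $\beta$ yields a non-separating disk: one must verify that an essential arc of the once-punctured torus is non-separating, and that the surgery then produces a boundary curve that is non-separating in $\partial V$ (rather than merely in the punctured torus, which follows since $\partial V \cap \partial X_1$ injects into $H_1(\partial V)$). One must also confirm, at each reducing isotopy and for each surgered disk, that disjointness from $c_1 \cup c_2$ is preserved, which is exactly where the hypotheses $[c_i] \neq 1$ are repeatedly used.
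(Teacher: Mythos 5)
Your proof is correct, and it takes a genuinely different route from the paper's. The paper never examines $E \cap E'$: it fixes the meridian disks $D_1, D_2$ of $X_1, X_2$, cuts $\partial V$ along $\partial D_1 \cup \partial D_2$ into a four-holed sphere $S$ in which each $c_i$ becomes a family of parallel arcs joining $d_i^+$ to $d_i^-$, and reduces the lemma to showing $E' \cap (D_1 \cup D_2) = \emptyset$; an outermost subdisk of $E'$ would produce a separating arc $\alpha \subset S$ with both endpoints on (say) $d_2^+$, forcing the count $\#(\beta \cap d_2^-) + 2 \leq \#(\beta \cap d_2^+)$ for $\beta = \partial E' \cap S$, which is absurd because $\partial E'$ meets the two copies $d_2^{\pm}$ of $\partial D_2$ in equally many points. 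You instead put $E'$ in minimal position with respect to $E$ itself and surger $E$ along an outermost disk of $E'$; the step you flagged as the main obstacle does go through: a separating arc in a once-punctured torus cuts it into pieces each having connected boundary (the single boundary circle is split into two arcs, one per side) with total Euler characteristic $0$, so the pieces are a disk and a once-punctured torus and the arc is boundary-parallel --- hence your essential $\beta$ is non-separating, the dual curve to $\beta$ shows the surgered boundary is non-separating in $\partial V$ (using that $H_1$ of the once-punctured torus injects into $H_1(\partial V)$, as you note), and the uniqueness clause of Lemma \ref{2.2}, applied once to $c_1$ and once to $c_2$, yields the contradiction $D_1 \simeq D_2$. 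Your route buys two things: the curve-counting on $S$ --- the least transparent step of the paper's proof --- is replaced by a second appeal to the already-proved Lemma \ref{2.2}, and you make the endgame explicit ($E \cap E' = \emptyset$ forces $\partial E'$ to bound a disk in $\partial X_1$ containing the copy of $E$, whence $E' \simeq E$), which the paper leaves implicit behind ``it suffices to show''. Conversely, the paper's argument avoids any surgery on $E$ and stays closer in spirit to its proof of Lemma \ref{2.2}. One small remark: your innermost-circle step does not actually need $[c_i] \neq 1$, since the ball bounded by the two subdisks lies in $\Int(V)$ and is automatically disjoint from $c_1 \cup c_2 \subset \partial V$; that hypothesis is genuinely used only in the boundary-parallel-arc reduction and in the appeals to Lemma \ref{2.2}.
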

\begin{proof}
Let $E'$ be an essential separating disk in $V$ disjoint from $c_1 \cup c_2$. 
Let $D_i$ be a meridian disk of $X_i$ satisfying $\partial D \subset \partial V$. 
Let $S$ be the $4$-holed sphere $\partial V$ cut off by $\partial D_1 \cup \partial D_2$.
Denote by $d_i^+$ and $d_i^-$ ($i=1,2$) the boundary circles of $S$ coming from
$\partial D_i$.  
Then $c_i \cap S$ consists of parallel arcs connecting $d_i^+$ and $d_i^-$. 
It suffices to show that 
$E ^ \prime \cap (D_1 \cup D_2) = \emptyset$. 
Assume for a contradiction that 
$E^\prime \cap (D_1 \cup D_2) \neq \emptyset$. 
Let $\delta$ be an outermost subdisk of $E'$ cut off by $E' \cap (D_1 \cup D_2)$. 
Then $\alpha := \partial \delta \cap S$ is a separating arc in $S$. 
Without loss of generality, we can assume that 
the end points of $\alpha$ line in $d_2^+$. 
See Figure \ref{fig:outermost}.  
\begin{center}
\begin{overpic}[width=5cm,clip]{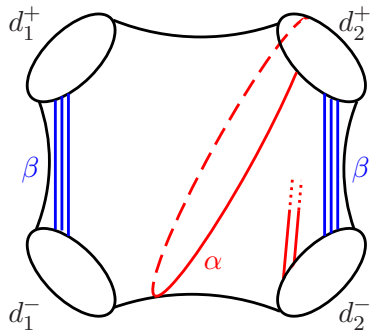}
  \linethickness{3pt}
  \put(74,25){\color{red} $\alpha$}
  \put(5,60){\color{blue} $\beta$}
  \put(130,60){\color{blue} $\beta$}
  \put(0,115){$d_1^+$}
  \put(125,115){$d_2^+$}
  \put(0,5){$d_1^-$}
  \put(125,5){$d_2^-$}
\end{overpic}
\captionof{figure}{The arc $\alpha$ and the arcs $\beta$.}
\label{fig:outermost}
\end{center}
Put $\beta := \partial E' \cap S$. 
We note that $\beta$ consists of essential arcs in $S$, and $\alpha \subset \beta$. 
Then it is easily seen that 
$\#( \beta \cap d_2^-) + 2 \leq \# ( \beta \cap d_2^+) $, 
which is a contradiction.  
\end{proof}

A pair $(V, k)$ of a handlebody $V$ and the union $k = \cup_{i=1}^n k_i$ of mutually disjoint, mutually non-parallel, simple closed curves $k_1, k_2, \ldots , k_n$ on $\partial V$ is called a 
{\it relative handlebody}. 
A relative handlebody $(V, k)$ is said to be {\it boundary-irreducible} 
if $\partial V \setminus k$ is incompressible in $V$. 
A surface $S$ in $V$ is said to be {\it essential} in $(V, k)$ 
if $S$ is an incompressible surface disjoint from $k$, and 
for every disk $D \subset V$ such that 
$D \cap S = \Cl ( \partial D - \partial V)$ 
is connected and $D \cap k = \emptyset$, 
there is a disk $D' \subset S$ with $\Cl (\partial D' - \partial S) = D \cap S$ and 
$D' \cap k = \emptyset$.

Denote by $\mathcal{A} (V, k)$ the subgroup of the mapping class
group $\MCG (V, k)$ generated by all twists along essential annuli in $(V, k)$. 
\begin{lemma}[Johannson \cite{Jo1}, Remark 8.9]
\label{rel-handlebody}
Let $(V, k)$ be a boundary-irreducible relative handlebody. 
Then $\MCG (V, k) / \mathcal{A} (V, k)$ is a finite group. 
\end{lemma}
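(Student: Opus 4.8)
The plan is to deduce the statement from Johannson's theory of the characteristic submanifold (the JSJ decomposition). Regarding $V$ as a Haken manifold equipped with the boundary pattern determined by $k$, the boundary-irreducibility hypothesis guarantees that $\partial V \setminus k$ is incompressible, so that the characteristic submanifold $\Sigma = \Sigma(V,k) \subset V$ — the union, maximal up to isotopy, of the essential $I$-bundle and Seifert-fibered pieces — is well defined and canonical. Since $\pi_1(V)$ is free, $V$ contains no essential torus; hence the frontier $\Cl(\partial \Sigma \setminus \partial V)$ consists entirely of essential annuli, each of which is an essential annulus in $(V,k)$ and thus carries a twist lying in $\mathcal{A}(V,k)$.

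The first main step is the Enclosing (Classification) Theorem: every element of $\MCG(V,k)$ is represented by a homeomorphism preserving $\Sigma$, and two such representatives are isotopic if and only if they are isotopic through $\Sigma$-preserving homeomorphisms. This produces restriction homomorphisms to the mapping class group of the exterior $E(\Sigma) = \Cl(V \setminus \Sigma)$ and to the mapping class groups of the pieces of $\Sigma$, compatible along the frontier annuli. By maximality of $\Sigma$, the exterior $E(\Sigma)$ contains no essential annulus or torus, i.e. it is simple; Johannson's finiteness theorem for simple Haken manifolds (or, via geometrization, the finiteness of the isometry group of a finite-volume hyperbolic manifold) then shows that $\MCG(E(\Sigma))$ is finite. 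Consequently it suffices to bound, modulo $\mathcal{A}(V,k)$, the subgroup acting trivially on $E(\Sigma)$ and preserving each piece of $\Sigma$ with its fibering.

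For such mapping classes I would argue piece by piece. On a Seifert-fibered or $I$-bundle piece $P$ of $\Sigma$ the fiber-preserving mapping class group surjects, with the fiber involution and the vertical twists in the kernel, onto the mapping class group of the base surface (orbifold). The crucial point is that each Dehn-twist generator of the base is induced by a twist along a vertical annulus of $P$, and every vertical annulus is essential in $(V,k)$; hence the entire image of this surjection is realized inside $\mathcal{A}(V,k)$. What remains in each piece modulo $\mathcal{A}(V,k)$ is therefore generated by finitely many classes of finite order (the fiber involution, and permutations of the fibered boundary components and of the exceptional fibers), together with the frontier-annulus twists, which again lie in $\mathcal{A}(V,k)$.

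Assembling these observations, the image of $\MCG(V,k)$ in $\MCG(E(\Sigma))$ is finite, while the kernel of this restriction is, modulo $\mathcal{A}(V,k)$, generated by the finitely many finite-order piece symmetries; hence $\MCG(V,k)/\mathcal{A}(V,k)$ is finite. The step I expect to be the main obstacle is the bookkeeping at the frontier: one must check that the matching conditions forced along the frontier annuli do not produce infinitely many distinct gluings once the vertical-twist ambiguity has been absorbed into $\mathcal{A}(V,k)$, and that the twists used to trivialize the base mapping classes can be chosen coherently across adjacent pieces. Verifying that $E(\Sigma)$ is genuinely simple relative to its boundary pattern, and citing the correct finiteness statement for simple Haken manifolds, is the other point requiring care.
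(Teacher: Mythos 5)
The paper offers no argument for this lemma at all: it is quoted directly from Johannson \cite{Jo1}, Remark 8.9, so the only ``in-paper proof'' is that citation. Your proposal reconstructs exactly the machinery behind Johannson's remark --- the characteristic submanifold $\Sigma$ of $(V,k)$ viewed as a Haken manifold with useful boundary pattern, finiteness of the mapping class group of the simple exterior $E(\Sigma)$, and absorption of vertical twists of the fibered pieces into $\mathcal{A}(V,k)$ --- so in substance you are following the same route as the source the authors lean on, and the outline is sound. One simplification you should make explicit works in your favor: since $\pi_1(V)$ is free, any Seifert fibered piece of $\Sigma$ is a fibered solid torus, so its base is a disk with at most one cone point. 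This is what rescues your claim that base Dehn twists lift to twists along vertical \emph{annuli}; over an interior curve of a general Seifert base the vertical surface is a torus, not an annulus, so without this observation that step of your sketch would be wrong as stated.

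The one genuine logical slip is in your final assembly: ``the kernel is, modulo $\mathcal{A}(V,k)$, generated by finitely many classes of finite order; hence finite'' is a non sequitur, since a group generated by finitely many torsion elements can be infinite (the infinite dihedral group, or $PSL_2(\mathbb{Z})$). What must actually be proved --- and what Johannson proves piece by piece --- is that for each I-bundle or fibered solid torus piece the fiber-preserving mapping class group \emph{modulo vertical twists} is itself a finite group: for an I-bundle this reduces to the finiteness of the mapping class group of the base surface modulo the subgroup generated by Dehn twists, which has finite index also in the non-orientable case by Lickorish's theorem on the twist subgroup. One then checks that the kernel of the restriction to $E(\Sigma)$ injects, modulo $\mathcal{A}(V,k)$, into the product of these finite groups, the coherence along frontier annuli (the issue you rightly flag) being controlled by the uniqueness of $\Sigma$ up to ambient isotopy. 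With that substitution your argument closes, and it is then a faithful rendering of the proof the paper cites.
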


\subsection{Compression bodies}

Let $V$ be a handlebody. 
A (possibly empty, possibly disconnected) subgraph of a spine of $V$ is called 
a {\it subspine} of $V$ if it does not contain a contractible component. 
A {\it compression body} $W$ is the exterior $E(\Gamma)$ 
of a subspine $\Gamma$ of a handlebody $V$. 
The component $\partial_+ W = \partial V$ is called 
the {\it exterior boundary} of
$W$, and $\partial_- W = \partial W \setminus \partial_+ W = \partial \Nbd (\Gamma)$ is called the {\it interior boundary} of $W$. 
We remark that the interior boundary is incompressible in $W$, see Bonahon \cite{Bon83}.

A compression body $W$ with $\partial_+ W$ a closed orientable surface of genus two 
is one of the following (see Figure \ref{fig:ccb}): 
(i) $\partial_- W = \emptyset$;
(ii) $\partial_- W$ is a torus; 
(iii) $\partial_- W$ consists of two tori;  
(iv) $\partial_- W$ is a closed orientable surface of genus two. 
\begin{center}
\begin{overpic}[width=15cm,clip]{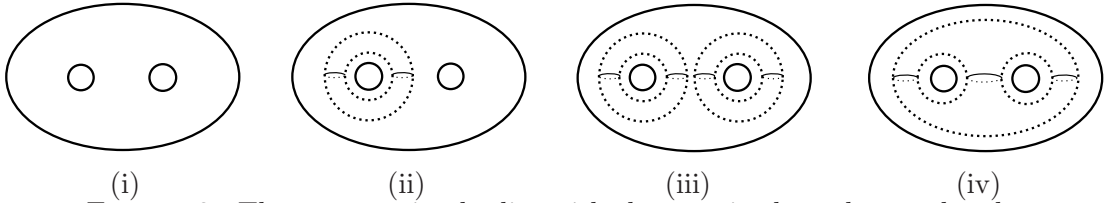}
  \linethickness{3pt}
  \put(45,-10){(i)}
  \put(150,-10){(ii)}
  \put(255,-10){(iii)}
  \put(365,-10){(iv)}
\end{overpic}
\captionof{figure}{The compression bodies with the exterior boundary a closed surface of genus two.}
\label{fig:ccb}
\end{center}

\begin{theorem}[Bonahon \cite{Bon83}, Theorem 2.1]
\label{thm:Bonahon}
For a compact, orientable, irreducible $3$-manifold $M$ with $\partial M$ connected,  
there exists a unique compression body $W$ in $M$ such that 
$\partial_+ W = \partial M$, and that the closure of $M \setminus W$ is boundary irreducible. 
\end{theorem}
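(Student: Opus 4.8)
The plan is to establish the two assertions separately: first the existence of such a compression body $W$ by a maximal-compression construction, and then its uniqueness up to isotopy by an incompressible-surface argument, the latter being where the real work lies.

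For existence, I would proceed by compressing $\partial M$ as far as possible. If $\partial M$ is already incompressible in $M$, then a collar $\Nbd(\partial M) \cong \partial M \times [0,1]$ is itself the desired compression body, with $\Cl(M \setminus W) = M$ boundary-irreducible. In general, fix a maximal collection $\mathcal{D} = D_1 \cup \cdots \cup D_k$ of pairwise disjoint, pairwise non-parallel essential disks properly embedded in $M$; such a maximal collection is finite by Haken finiteness. Setting $W := \Nbd(\partial M \cup \mathcal{D})$ produces a compression body with $\partial_+ W = \partial M$, since $W$ is obtained from the collar $\partial M \times [0,1]$ by attaching $2$-handles along $\mathcal{D}$. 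Using that $M$ is irreducible, any $2$-sphere component of $\partial_- W$ bounds a ball, which I would absorb into $W$. Finally, maximality of $\mathcal{D}$ forces $\Cl(M \setminus W)$ to be boundary-irreducible: a compressing disk for $\partial_- W$ in the complement could be added to $\mathcal{D}$, contradicting maximality.

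For uniqueness, let $W$ and $W'$ be two compression bodies as in the statement, and write $F = \partial_- W$ and $F' = \partial_- W'$. Because $F$ is incompressible in $W$ (the interior boundary of a compression body is always incompressible) and in $\Cl(M \setminus W)$ (which is boundary-irreducible by hypothesis), $F$ is incompressible in $M$; the same holds for $F'$. Since $F$ and $F'$ are closed surfaces lying in $\Int M$, their intersection consists of circles only. I would isotope $F'$ to minimize $\#(F \cap F')$; then an innermost-circle argument, using the incompressibility of $F$ and $F'$ together with the irreducibility of $M$, removes every intersection circle, so that $F \cap F' = \emptyset$ after isotopy. Disjointness together with the connectedness of the two compression bodies then yields a nesting, say $W' \subseteq W$, placing $F'$ as a closed incompressible surface inside the compression body $W$. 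Invoking the classification of incompressible surfaces in a compression body, namely that every closed incompressible surface is parallel to a component of the interior boundary, I conclude that $F'$ is parallel to $F$, so $F$ and $F'$ cobound a product region and $W$ is isotopic to $W'$.

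The main obstacle is the uniqueness half, and within it the passage from ``$F \cap F' = \emptyset$'' to ``$F$ and $F'$ are parallel.'' The circle-removal step is routine because the interior boundaries are closed, so no outermost-arc analysis is needed; but the concluding step genuinely relies on the structural fact that a compression body carries no closed incompressible surface other than parallel copies of its interior boundary. Care is also needed to justify the nesting of $W'$ and $W$ when $\partial_- W$ is disconnected, as in cases (iii) and (iv) above.
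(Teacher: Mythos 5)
The paper offers no proof of this statement: it is quoted verbatim from Bonahon \cite{Bon83} (with a remark that Bonahon proves it in greater generality), so your attempt can only be measured against Bonahon's argument, whose overall shape — maximal compression for existence, incompressible-surface isotopy for uniqueness — you have correctly reproduced. The existence half is acceptable as a sketch, though one elision is worth flagging: a compressing disk $D'$ for $\partial_- W$ in $\Cl(M \setminus W)$ has boundary on $\partial_- W$, not on $\partial M$, so it cannot literally be ``added to $\mathcal{D}$''; you must first isotope $\partial D'$ off the scar disks, push $D'$ through the collar to a disk $D''$ with $\partial D'' \subset \partial M$ disjoint from $\mathcal{D}$, and then rule out both that $\partial D''$ is inessential in $\partial M$ (here irreducibility is needed again) and that $D''$ is parallel to some $D_i$. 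These are standard and repairable.

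The genuine gap is the nesting step in the uniqueness half, precisely at the point you yourself flagged. Disjointness of $F = \partial_- W$ and $F' = \partial_- W'$ together with connectedness of $W$ and $W'$ does \emph{not} yield $W' \subseteq W$: after the innermost-circle isotopy, a component of $F'$ may lie in $C = \Cl(M \setminus W)$, and the only thing known about $C$ is boundary-irreducibility, which in no way prevents $C$ from containing closed incompressible surfaces that are not boundary-parallel. Your structure theorem (closed incompressible surfaces in a compression body are parallel to the interior boundary) applies only once all of $F'$ sits inside $W$, so the argument breaks exactly there — and note the problem already arises when $\partial_- W$ is connected, not only in cases (iii) and (iv). The standard repair, essentially Bonahon's, is to sweep disks rather than surfaces: choose a complete meridian system $\Delta'$ for $W'$ with $\partial \Delta' \subset \partial M$; since $F$ is incompressible in $M$ and $M$ is irreducible, innermost-circle arguments isotope $\Delta'$ off $F$, whence $\Delta' \subset W$; as $F'$ is isotopic to the surface obtained by compressing $\partial M$ along $\Delta'$, this isotopes $F'$ into $\Int W$, and only then does the classification inside $W$ apply. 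Even after that, some bookkeeping remains before concluding $W \simeq W'$ — one must exclude two components of $F'$ being parallel to the same component of $F$, and run the symmetric sweep placing $F$ inside $W'$ — which your outline does not address.
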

The compression body $W$ in Theorem \ref{thm:Bonahon} is called 
the {\it characteristic compression body} of $M$. 

\begin{remark}
In \cite{Bon83}, Theorem $\ref{thm:Bonahon}$ is stated under a more general setting. 
\end{remark}

\subsection{W-decompositions}
\label{subsec:W-decompositions}

Let $M$ be a boundary-irreducible Haken 3-manifold. 
An essential annulus or torus $S$ in $M$ is said to be {\it canonical} 
if any other essential annulus or torus in $M$ can be isotoped to be disjoint from $S$. 
A maximal system $\{ S_1, S_2, \ldots, S_n\}$ of pairwise disjoint, pairwise non-parallel 
canonical surfaces in $M$ is called a {\it W-system}. 

\begin{lemma}[Neumann-Swarup \cite{NS}]
\label{W-system}
The W-system of a boundary-irreducible Haken 3-manifold is unique up to isotopy. 
\end{lemma}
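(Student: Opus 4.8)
The plan is to exploit the fact that the defining property of a canonical surface---that it can be isotoped off every essential annulus and torus---is itself invariant under isotopy, so that ``being canonical'' is a property of an isotopy class rather than of a particular representative. Let $\mathcal{S} = \{S_1, \ldots, S_n\}$ and $\mathcal{S}' = \{S_1', \ldots, S_m'\}$ be two W-systems of the boundary-irreducible Haken manifold $M$. The goal is to produce an ambient isotopy of $M$ carrying $\mathcal{S}'$ onto $\mathcal{S}$; once the two systems agree up to isotopy the lemma follows. The argument splits into a disjointness step and a matching step.

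First I would isotope the two systems into a position minimizing the total number of components of $\mathcal{S} \cap \mathcal{S}'$, and argue that they are then disjoint. The input is that each component of $\mathcal{S}'$ is an essential annulus or torus while each component of $\mathcal{S}$ is canonical, so by definition any single $S_j'$ can be isotoped off any single $S_i$; equivalently, each such pair has geometric intersection number zero. Incompressibility and essentiality of all the surfaces let me first remove, by innermost-disk and boundary-compression isotopies, every component of $\mathcal{S} \cap \mathcal{S}'$ that is inessential on either of the two surfaces containing it, so in minimal position every intersection curve is essential on both sides. If some $S_i$ still met some $S_j'$, then an innermost or outermost arc/curve of the intersection pattern on one of the two surfaces would, because the pair can be made disjoint, support an isotopy strictly reducing the total intersection number, contradicting minimality. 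Hence $\mathcal{S} \cap \mathcal{S}' = \emptyset$.

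Once the two systems are disjoint, each $S_j'$ lies in a single component $N$ of the manifold obtained by cutting $M$ along $\mathcal{S}$ and remains an essential annulus or torus in $M$. I would then invoke maximality: since $\mathcal{S}$ is a \emph{maximal} collection of pairwise disjoint, pairwise non-parallel canonical surfaces, the canonical surface $S_j'$, being disjoint from $\mathcal{S}$, cannot be adjoined to $\mathcal{S}$ without violating either disjointness (already ruled out) or non-parallelism; thus $S_j'$ is parallel to some $S_i$, once one notes that $S_j'$ can be neither $\partial$-parallel nor boundary-parallel by essentiality. Running the symmetric argument with the roles of $\mathcal{S}$ and $\mathcal{S}'$ interchanged, and using that the surfaces within each system are pairwise non-parallel, yields mutually inverse injections and hence a bijection matching each $S_j'$ with a parallel $S_i$. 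Since all the surfaces are disjoint, the product regions realizing these parallelisms can be organized into a single ambient isotopy taking $\mathcal{S}'$ onto $\mathcal{S}$.

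The main obstacle is the disjointness step, specifically converting the \emph{per-pair} statement ``$S_i$ and $S_j'$ can be made disjoint'' supplied by canonicity into \emph{simultaneous} disjointness of the full collections: one must ensure that the intersection-reducing isotopy forced by one pair does not create new intersections with the remaining surfaces, and for this it seems necessary to keep track of the structure of the pieces $N$ cut out by $\mathcal{S}$, where essential annuli and tori are constrained (the pieces being Seifert fibered or simple). The bookkeeping is heaviest for the annuli, where one must control the induced arc patterns and the boundary curves on $\partial M$ and rule out degenerate configurations in which an essential intersection curve appears to be unavoidable despite each pair having geometric intersection number zero; handling this annulus/torus dichotomy is where the bulk of the technical effort lies.
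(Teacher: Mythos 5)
The paper itself offers no proof of this lemma --- it is imported wholesale from Neumann--Swarup \cite{NS} --- so the comparison is necessarily with the source's argument. Your matching step is essentially right and matches the expected structure: once the two W-systems are disjoint, maximality of each system together with pairwise non-parallelism within a system yields a parallelism bijection, and the product regions are innermost (no surface of either system can sit inside one, since an incompressible annulus or torus in a product region disjoint from the ends is parallel to an end), so the parallelisms assemble into a single ambient isotopy.

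The genuine gap is exactly where you locate it, but it is more serious than ``bookkeeping.'' After the inessential curves of $\mathcal{S} \cap \mathcal{S}'$ are removed, every remaining intersection curve is essential in both surfaces; on an annulus or torus an essential curve bounds no disk and cuts off no bigon, so there is \emph{no} innermost or outermost configuration left, and your claim that minimality of $\#(\mathcal{S} \cap \mathcal{S}')$ would then be contradicted has no support --- minimal position of two isotopically-disjoint essential surfaces being actually disjoint is precisely the hard content, not a formal consequence of minimality. Moreover the per-pair statement supplied by canonicity must be upgraded to simultaneous disjointness of the full collections, which is a substantive theorem in its own right. Neumann--Swarup obtain it by least-area techniques in the style of Freedman--Hass--Scott: least-area representatives of incompressible, $\partial$-incompressible isotopy classes realize geometric intersection numbers, so classes that are pairwise disjointable have simultaneously disjoint representatives, handling all pairs at once. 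The alternative combinatorial route (as in standard JSJ uniqueness proofs) analyzes how $\mathcal{S}'$ meets the Seifert-fibered and simple pieces cut out by $\mathcal{S}$ and pushes the resulting annuli out piece by piece. Your proposal names this obstacle but supplies neither mechanism, so as written the disjointness step fails and the proof is incomplete.
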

The result of cutting $M$ off by a W-system is called a {\it W-decomposition} of $M$.   

Let $M_1$, $M_2, \ldots, M_m$ be the result of performing 
the W-decomposition of $M$. 
We set $\partial_0 M_i := \partial M_i \cap \partial M$ and 
$\partial_1 M_i := \partial M_i \setminus \partial_0 M_i$ ($i = 1, 2, \ldots, m$). 
We say that $(M_i, \partial_0 M_i)$ (or simply $M_i$) is {\it simple} if any essential annulus or torus 
$(S, \partial S) \subset (M_i, \partial_0 M_i)$ is parallel to $\partial_1M_i$. 

\begin{lemma}[Neumann-Swarup \cite{NS}] 
\label{prop-simple}
If $M_i$ is not simple, then $M_i$ is either a Seifert fibered space or an $I$-bundle. 
\end{lemma}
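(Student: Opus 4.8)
The plan is to recognize the W-decomposition as the decomposition of $M$ along the frontier of its characteristic submanifold in the sense of Jaco--Shalen and Johannson, and then to read off the conclusion from the structure theorem for that submanifold. Recall that a boundary-irreducible Haken manifold $M$ contains a characteristic submanifold $\Sigma$, a codimension-zero submanifold, unique up to isotopy, each component of which carries a Seifert fibering or an $I$-bundle structure, whose frontier $\mathrm{Fr}(\Sigma)$ is a union of essential annuli and tori, and with the property (the Enclosing Theorem) that every essential annulus or torus in $M$ can be isotoped into $\Sigma$. The first step is to prove that, up to isotopy, $\mathrm{Fr}(\Sigma)$ is exactly a W-system: every component of $\mathrm{Fr}(\Sigma)$ is canonical, since any essential annulus or torus can be pushed into $\Sigma$ and hence off the frontier; and conversely a canonical surface, being isotopable off every essential annulus and torus, must come to lie parallel to a frontier component. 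By the uniqueness in Lemma~\ref{W-system} I may therefore assume that the $M_i$ are precisely the components of $\Sigma$ and of $E(\Sigma)$.

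With this identification the lemma reduces to two points. First I would verify that an essential annulus or torus $(S,\partial S)\subset(M_i,\partial_0 M_i)$ remains essential in $M$: incompressibility survives regluing along the incompressible W-system by an innermost-disk argument, and since $\partial_0 M_i\subset\partial M$ the boundary of $S$ stays on $\partial M$, so $S$ is not accidentally boundary-parallel. Now suppose $M_i$ were a component of $E(\Sigma)$. By the Enclosing Theorem $S$ could be isotoped into $\Sigma$; but an essential annulus or torus lying in the complement $E(\Sigma)$ and isotopic into $\Sigma$ must be parallel to a component of $\mathrm{Fr}(\Sigma)=\partial_1 M_i$, contradicting the hypothesis that $M_i$ is not simple. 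Hence $M_i$ is a component of $\Sigma$, and the classification of characteristic submanifolds gives that $M_i$ is a Seifert fibered space or an $I$-bundle, as claimed.

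The step I expect to be the genuine obstacle is not any of the cut-and-paste above but the structure theorem itself: the assertion that each component of $\Sigma$ is Seifert fibered or an $I$-bundle, equivalently that the essential annuli and tori threading a non-simple piece organize into an honest fibered structure on the \emph{entire} piece. This is the heart of the Characteristic Pair Theorem of Jaco--Shalen and Johannson, and I would invoke it (in the streamlined form of Neumann--Swarup) rather than reprove it. A secondary point demanding care is the identification of canonical surfaces with frontier components in degenerate cases, for instance when a Seifert piece is a solid torus or a thickened torus, or when an $I$-bundle is twisted so that a vertical annulus is one-sided; in such cases ``canonical'' and ``frontier'' can diverge, and I would dispose of these small Seifert and $I$-bundle exceptions by direct inspection.
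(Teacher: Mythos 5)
The paper offers no argument for Lemma \ref{prop-simple}: it is quoted directly from Neumann--Swarup \cite{NS}. So the only question is whether your derivation from the Jaco--Shalen/Johannson theory is itself correct, and it has a genuine gap at its very first step: the identification of the W-system with the frontier of the characteristic submanifold $\Sigma$. One direction of your identification is fine --- every component of $\mathrm{Fr}(\Sigma)$ is canonical, by the enclosing property exactly as you say. But the converse assertion, that every canonical surface is parallel to a frontier component, is false, and not only in the ``small'' degenerate cases you set aside at the end. Take $M$ to be the exterior of a trefoil (or any torus) knot: $M$ is Seifert fibered, so $\Sigma = M$ and $\mathrm{Fr}(\Sigma) = \emptyset$; yet the cabling annulus is the unique essential annulus up to isotopy and there are no essential tori, so it is canonical and the W-system is $\{A\}$, not empty. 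Consequently your reduction ``the $M_i$ are precisely the components of $\Sigma$ and of $E(\Sigma)$'' fails: here the W-pieces are two solid tori, which are proper sub-pieces of the single Seifert component $\Sigma = M$. Your subsequent dichotomy (pieces of $E(\Sigma)$ are simple, pieces of $\Sigma$ are fibered) therefore never engages the actual W-pieces, and had your identification been right it would have predicted an empty W-system for every Seifert fibered $M$, which is wrong.

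The relationship between the two decompositions is genuinely asymmetric: since all frontier components are canonical and canonical surfaces are pairwise disjoinable, the W-system \emph{refines} $\mathrm{Fr}(\Sigma)$ (modulo parallelism), and the extra canonical surfaces live inside the fibered components of $\Sigma$. A repair along your lines would have to show (i) that no canonical surface subdivides a simple complementary piece --- this part is essentially your argument and is fine, since a canonical annulus or torus isotopes into $\Sigma$, hence is frontier-parallel if it lies in $E(\Sigma)$ --- and (ii) that any additional canonical annulus or torus inside a Seifert fibered or $I$-bundle component of $\Sigma$ can be isotoped to be vertical (with the horizontal exceptions handled separately), so that cutting along it again yields Seifert fibered pieces or $I$-bundles, as in the trefoil example where the sub-pieces are solid tori. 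Point (ii) is precisely the comparison between the two decompositions that Neumann--Swarup carry out in \cite{NS}, and it is the content your sketch assumes away by positing the identification; invoking the Characteristic Pair Theorem alone, as you propose, does not supply it.
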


\section{Essential annuli in the exterior of a genus-two handlebody in $S^3$}
\label{sec:Essential annuli in the exterior of a handlebody of genus two in S3}

Essential annuli in the exterior of a handlebody of genus two 
embedded in $S^3$ play an important role in our paper. 
We first briefly review the classification of the essential annuli 
in the exterior of a handlebody of genus two embedded in $S^3$ obtained in \cite{KO}. 

\begin{description}

\item[{\rm Type 1}]
Let $\Gamma$ be a handcuff-graph embedded in $S^3$.  
Let $S$ be a sphere in $S^3$ that intersects $\Gamma$ 
in exactly one edge of $\Gamma$ twice and transversely. 
Set $V := \Nbd(\Gamma)$. 
We call $A := S \setminus \Int (V)$ 
a {\it Type $1$ annulus} for $V \subset S^3$. 
See Figure \ref{fig:type1}. 
We note that if $V \subset S^3$ admits an essential annulus of Type 1, then there exists an annulus component $B$ of $\partial V$ cut off by 
$\partial A$, and $A \cup B$ forms an essential torus in $E(V)$. 
\begin{center}
\begin{overpic}[width=12cm,clip]{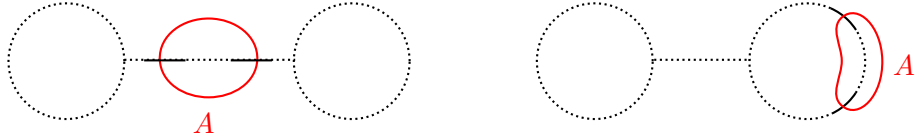}
  \linethickness{3pt}
  \put(75,0){\color{red} $A$}
  \put(340,22){\color{red} $A$}
\end{overpic}
\captionof{figure}{Type 1 annuli.}
\label{fig:type1}
\end{center}

\item[{\rm Type 2}]
Let $\Gamma$ be a handcuff-graph embedded in $S^3$.  
Assume that one of the two loops of 
$\Gamma$ is a trivial knot bounding a disk 
$D$ such that 
$\Int (D)$ intersects $\Gamma$ in an edge $e$ once and transversely.  
Set $V := \Nbd(\Gamma)$ and $A := D \cap E(V)$. 
We call $A$ a {\it Type $2$ annulus} for $V \subset S^3$. 
When $e$ is a loop, we say that $A$ is of {\it Type $2$-$1$}, and 
when $e$ is the cut-edge, we say that  $A$ is of {\it Type $2$-$2$}. 
See Figure \ref{fig:type2}. 
\begin{center}
\begin{overpic}[width=8cm,clip]{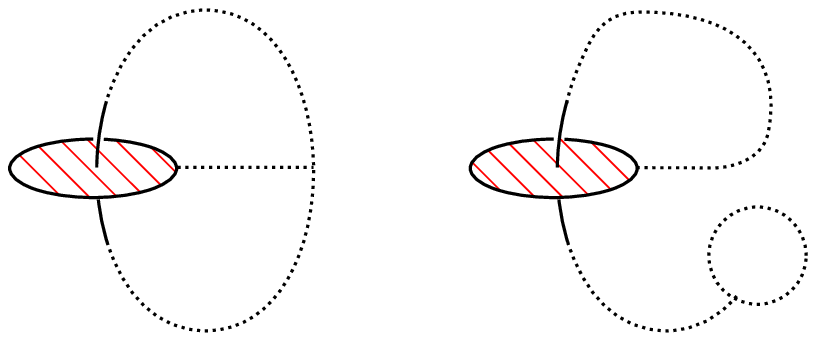}
  \linethickness{3pt}
  \put(45,0){(i)}
  \put(170,0){(ii)}
  \put(15,61){\color{red} $A$}
  \put(140,61){\color{red} $A$}
\end{overpic}
\captionof{figure}{(i) A Type 2-1 annulus, (i) A Type 2-2 annulus.}
\label{fig:type2}
\end{center}

\item[{\rm Type 3}]
Let $X$ be a solid torus embedded in $S^3$. 
Let $A$ be an annulus properly embedded in $E(X)$ so that 
$\partial A \cap \partial X$ consists of 
parallel non-trivial simple closed curves on 
$\partial X$. 
\begin{itemize}
\item
Let $\tau$ be a properly embedded trivial simple arc  
in $X$ with $\partial \tau \cap \partial A = \emptyset$. 
Set $V := X \setminus \Int ( \Nbd(\tau) )$. 
Then we call $A$ a {\it Type $3$-$1$ annulus} for $V \subset S^3$ provided that, 
if $\partial A$ bounds an essential disk in $X$, then any meridian disk of $X$ has non-empty 
intersection with $\tau$. 
We note that if $V \subset S^3$ admits an essential annulus of Type 3-1, 
$E(V)$ is not boundary-irreducible. 
\item
Suppose that $\partial A$ does not bound an essential disk in $X$. 
Let $\tau$ be a properly embedded simple arc 
in $E(X)$ with $\tau \cap A = \emptyset$. 
Set $V := X \cup \Nbd(\tau)$. 
Then we call $A$ a {\it Type $3$-$2$ annulus} for $V \subset S^3$. 
\end{itemize}

Let $X_1$, $X_2$ be two disjoint solid tori embedded in $S^3$. 
Assume that there exists an annulus $A$ properly embedded in 
$E(X_1 \sqcup X_2)$ such that 
$A \cap \partial X_i$ $(i=1,2)$ is a simple closed curve on 
$\partial X_i$ that does not bound a disk in $X_i$. 
Let $\tau \subset E(X_1 \sqcup X_2) \setminus A$ be 
a proper arc connecting 
$\partial X_1$ and $\partial X_2$. 
Set $V := X_1 \cup X_2 \cup \Nbd (\tau)$. 
Then we call $A$ a {\it Type $3$-$3$ annulus} for $V \subset S^3$. 
An annulus $A$ in the exterior of a genus two handlebody $V \subset S^3$ 
is called a {\it Type $3$ annulus} if it is one of 
Types $3$-$1$, $3$-$2$ and $3$-$3$ annuli. 
Figure \ref{fig:type3} shows 
schematic pictures of Type 3 annuli. 
\begin{center}
\begin{overpic}[width=13cm,clip]{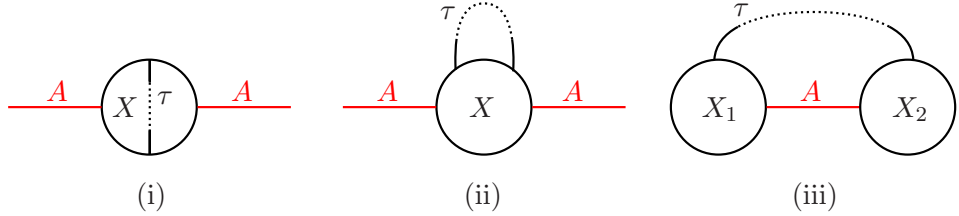}
  \linethickness{3pt}
  \put(54,0){(i)}
  \put(178,0){(ii)}
  \put(302,0){(iii)}
  \put(45,33){$X$}
  \put(180,33){$X$}
  \put(268,33){$X_1$}
  \put(340,33){$X_2$}
  \put(62,38){$\tau$}
  \put(169,70){$\tau$}
  \put(280,71){$\tau$}
  \put(20,39){\color{red} $A$}
  \put(90,39){\color{red} $A$}
  \put(145,39){\color{red} $A$}
  \put(215,39){\color{red} $A$}
  \put(305,39){\color{red} $A$}
\end{overpic}
\captionof{figure}{(i) A Type 3-1 annulus, (ii) A Type 3-2 annulus, (iii) A Type 3-3 annulus.}
\label{fig:type3}
\end{center}

\item[{\rm Type 4}]
Let $K$ be a knot whose exterior $E(K)$ contains 
an essential two-holed torus $P$ such that 
\begin{itemize}
\item 
$P$ cuts $E(K)$ into two handlebodies $V$ and $W$ of genus two; and  
\item
$\partial P$ consists of parallel non-integral slopes on $\partial \Nbd(K)$. 
\end{itemize}
Such a knot is called an {\it Eudave-Mu\~noz knot}.  
See Eudave-Mu\~noz \cite{EM2} for more details of such knots. 
Set $A := \partial \Nbd(K) \setminus 
\Int (\partial \Nbd(K) \cap \partial V)$. 

\begin{itemize}
\item
We call $A$ a {\it Type $4$-$1$ annulus} 
for $V \subset S^3$. 
\item
Let $U \subset S^3$ be a knot or a two component link 
contained in $W$ so that 
$W \setminus \Int (\Nbd(U))$ 
is a compression body. 
Let $i : E(U) \to S^3$ be a re-embedding 
such that $E(i(E(U)))$ is not a solid torus or two solid tori. 
Then we call $i(A)$ 
a {\it Type $4$-$2$ annulus} 
for $i (V) \subset S^3$. 
We note that if $V \subset S^3$ admits an essential annulus of Type 4-2, 
$E(V)$ contains an essential torus. 
\end{itemize}
An annulus $A$ in the exterior of a genus two handlebody $V \subset S^3$ 
is called a {\it Type $4$ annulus} if it is one of 
Types $4$-$1$ and $4$-$2$. 
Figure \ref{fig:type4} depicts schematic picture of 
an essential annulus of Type 4. 
\begin{center}
\begin{overpic}[width=6cm,clip]{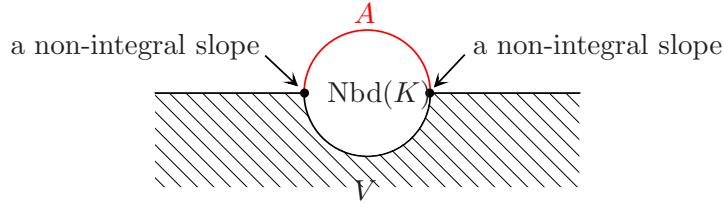}
  \linethickness{3pt}
  \put(70,37){$\Nbd(K)$}
  \put(80,0){$V$}
  \put(-50,55){a non-integral slope}
  \put(125,55){a non-integral slope}
  \put(80,67){\color{red} $A$}
\end{overpic}
\captionof{figure}{A Type 4 annulus.}
\label{fig:type4}
\end{center}
\end{description}

\begin{theorem}[\cite{KO}]
\label{thm:classification of essential annuli}
Let $V$ be a handlebody of genus two embedded in $S^3$. 
Then each essential annulus in the exterior of $V$ 
belongs to exactly one of the four Types 
listed above. 
\end{theorem}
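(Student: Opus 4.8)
The plan is to prove Theorem \ref{thm:classification of essential annuli} by a JSJ/W-decomposition analysis of the exterior $W := E(V)$, guided by the characteristic compression body of $W$ from Theorem \ref{thm:Bonahon}. First I would let $A$ be an arbitrary essential annulus in $W$ and consider the characteristic compression body $C$ of $W$. The intersection $A \cap \partial_- C$ with the interior boundary controls the combinatorics: since $\partial_- C$ is incompressible and $A$ is incompressible, I would isotope $A$ to meet $\partial_- C$ in essential arcs and circles, then analyze how $\partial A$ sits on $\partial_+ W = \Sigma$. Because $V$ has genus two, $\partial_+ C = \Sigma_2$ and $\partial_- C$ falls into the four cases (i)--(iv) enumerated after Figure \ref{fig:ccb}, so the bulk of the argument is a case division driven by the topology of $\partial_- C$ together with whether $A$ is boundary-parallel, compressible in the JSJ pieces, or genuinely essential.

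The main engine is the W-decomposition of $W$ (Lemmas \ref{W-system} and \ref{prop-simple}). The plan is to invoke that an essential annulus, being canonical or isotopic into a Seifert-fibered or $I$-bundle piece, must be carried by one of the characteristic pieces of the JSJ decomposition of $W$. Concretely, I would show: if $W$ is itself boundary-irreducible, then $A$ lies in the W-system, and each canonical annulus is realized as a Type 1, Type 3-3, or Type 4 annulus according to whether the adjacent pieces are $I$-bundles over annuli, Seifert pieces over solid tori, or the exterior of an Eudave-Mu\~noz knot; whereas if $W$ is boundary-reducible (equivalently $\partial_- C \neq \partial_+ C$, the non-generic cases (i)--(iii)), then $V$ is expressed as a neighborhood of a handcuff graph or a solid torus with an arc attached, and the compressions of $\Sigma$ into $W$ produce precisely the Type 2 and Type 3-1/3-2 pictures. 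For each type I would exhibit the defining data (the handcuff graph $\Gamma$, the disk $D$, the arc $\tau$, or the knot $K$) directly from the position of $A$ relative to the meridian system of $V$ and the essential surfaces in $W$, using Lemmas \ref{2.2}, \ref{case2}, and Corollary \ref{primitive} to pin down the relevant meridian disks and primitive curves disjoint from $\partial A$.

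For the exhaustiveness and exclusivity (``exactly one of the four Types''), I would argue that the four types are distinguished by robust topological invariants of the pair $(W, A)$: whether $A \cup B$ closes up to an essential torus in $W$ for some annulus $B \subset \partial V$ (Type 1 and Type 4-2), whether $A$ fails to keep $W$ boundary-irreducible (Type 3-1), and the slope/non-integrality data on the relevant knot exterior (Type 4). Since the W-system is unique up to isotopy by Lemma \ref{W-system}, no essential annulus can be simultaneously carried by two incompatible characteristic pieces, which gives the ``exactly one'' clause once I check the types are pairwise disjoint as configurations.

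The hard part will be the Seifert-fibered and Eudave-Mu\~noz cases, i.e. producing the Type 3 and Type 4 normal forms. Establishing that a canonical annulus adjacent to a Seifert piece over a solid torus forces the Type 3 arc-and-solid-torus picture requires controlling the Seifert fibration of the JSJ piece and showing the fibers match the prescribed slopes on $\partial X_i$; and the Type 4 case hinges on recognizing the essential two-holed torus $P$ and the non-integral boundary slopes characteristic of Eudave-Mu\~noz knots, which is where the delicate Dehn-filling and slope analysis of \cite{EM2} enters. I expect most of the routine work to be the normal-form isotopies reducing $A \cap \partial_- C$, while the genuinely substantial obstruction is verifying that no further essential annuli lie outside this list, i.e. that the enumeration of JSJ pieces of $E(V)$ for genus-two handlebodies in $S^3$ is complete.
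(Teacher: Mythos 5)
First, a point of order: this paper does not prove the statement at all --- Theorem \ref{thm:classification of essential annuli} is imported wholesale from \cite{KO} (where it occupies most of a paper, with an appendix by Cameron Gordon), so there is no internal proof to compare yours against; what follows assesses your sketch on its own terms. Its central mechanism is flawed in two concrete places. You claim that when $E(V)$ is boundary-irreducible an essential annulus $A$ ``lies in the W-system,'' but the W-system consists only of \emph{canonical} annuli and tori; a vertical essential annulus inside a Seifert-fibered piece of the decomposition, or any essential annulus meeting a canonical one essentially, is not in the W-system, and Lemma \ref{W-system} gives you no grip on it --- so uniqueness of the W-system proves neither exhaustiveness nor the ``exactly one'' clause. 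Moreover, your case division misassigns the types: you place Type 2 and Type 3-2 in the boundary-reducible case and Types 1, 3-3, 4 in the boundary-irreducible case, whereas in fact only Type 3-1 forces $E(V)$ to be boundary-reducible (as remarked in Section \ref{sec:Essential annuli in the exterior of a handlebody of genus two in S3}), Types 1 and 4-2 are accompanied by essential tori, and Types 2-1, 2-2, 3-2, 3-3, 4-1 all occur in boundary-irreducible \emph{atoroidal} exteriors --- indeed the proof of Theorem \ref{theo-main} in this paper consists precisely of ruling out those five types inside such an exterior. The dichotomy driving your argument therefore contradicts the classification you are trying to prove.

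The second, deeper gap is the one you flag but do not fill: exhaustiveness, and in particular the Type 4 normal form. The hard content of \cite{KO} is that when none of the simpler configurations occurs, the annulus forces an essential two-holed torus $P$ splitting a knot exterior into two genus-two handlebodies with \emph{non-integral} boundary slopes, which identifies $K$ as an Eudave-Mu\~noz knot. This step genuinely requires Dehn-surgery machinery (Gordon--Luecke-type results and Eudave-Mu\~noz's classification of non-integral toroidal surgeries); no amount of normal-form isotopy of $A \cap \partial_- C$ against the characteristic compression body, which is what your sketch offers, produces the non-integrality of the slopes or excludes further essential annuli outside the list. As written, the proposal reduces the theorem to exactly the assertion it was supposed to establish.
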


The following is straightforward. 

\begin{lemma}
\label{lem:non-existence of Types 1, 3-1 and 4-2}
Let $V$ be a handlebody of genus two embedded in $S^3$ 
so that $E(V)$ is boundary-irreducible and atoroidal.  
Then $E(V)$ does not admit Types $1$, $3$-$1$ or $4$-$2$ annuli. 
\end{lemma}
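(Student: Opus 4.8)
The plan is to prove the contrapositive in a structured way: I will examine each of the three forbidden types (Type 1, Type 3-1, and Type 4-2) in turn, and show that the existence of such an annulus in $E(V)$ contradicts one of the two standing hypotheses, namely that $E(V)$ is boundary-irreducible and atoroidal. The key observation is that the defining remarks for each of these types, recorded in Section \ref{sec:Essential annuli in the exterior of a handlebody of genus two in S3}, already pin down exactly which hypothesis each type violates. So the entire argument is a matter of invoking those remarks and verifying that the resulting surfaces are genuinely essential.

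First I would handle Type 1. By the remark accompanying the definition of a Type 1 annulus, if $V \subset S^3$ admits an essential annulus $A$ of Type 1, then there is an annulus component $B$ of $\partial V$ cut off by $\partial A$ such that $A \cup B$ is an essential torus in $E(V)$. This directly contradicts the assumption that $E(V)$ is atoroidal. Next, for Type 3-1, the remark in its definition states that if $V \subset S^3$ admits an essential annulus of Type 3-1, then $E(V)$ is not boundary-irreducible, contradicting the boundary-irreducibility hypothesis. Finally, for Type 4-2, the remark states that if $V \subset S^3$ admits an essential annulus of Type 4-2, then $E(V)$ contains an essential torus, again contradicting atoroidality.

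Thus each of the three cases reduces immediately to one of the two hypotheses, and the proof amounts to collecting these three implications. The main subtlety, and the only place where any real care is needed, is to make sure that the tori produced in the Type 1 and Type 4-2 cases are essential (incompressible and not boundary-parallel) rather than merely embedded, and that the compressibility phenomenon in the Type 3-1 case is correctly read as a failure of boundary-irreducibility for $E(V)$; but these are precisely the contents of the cited remarks, which the lemma's proof is entitled to assume. Since the statement is flagged as ``straightforward,'' I expect no genuine obstacle: the work has effectively been front-loaded into the definitions of the types. The proof is therefore a short case analysis with no calculation.

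\begin{proof}
Suppose for contradiction that $E(V)$ admits an essential annulus $A$ of one of the Types $1$, $3$-$1$, or $4$-$2$. If $A$ is of Type $1$, then by the remark following the definition of Type $1$ there is an annulus component $B$ of $\partial V$ cut off by $\partial A$ such that $A \cup B$ is an essential torus in $E(V)$, contradicting the assumption that $E(V)$ is atoroidal. If $A$ is of Type $3$-$1$, then by the remark following the definition of Type $3$-$1$, $E(V)$ is not boundary-irreducible, contradicting our hypothesis. If $A$ is of Type $4$-$2$, then by the remark following the definition of Type $4$-$2$, $E(V)$ contains an essential torus, again contradicting atoroidality. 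Hence $E(V)$ admits none of these three types of annuli.
\end{proof}
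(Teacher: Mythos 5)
Your proof is correct and is exactly the argument the paper intends: the lemma is stated as ``straightforward'' precisely because the remarks attached to the definitions of Types $1$, $3$-$1$, and $4$-$2$ annuli already record that each type forces either an essential torus in $E(V)$ or boundary-reducibility of $E(V)$, contradicting the hypotheses. Your three-case reduction to those remarks matches the paper's intended reasoning, so there is nothing to add.
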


Existence of an essential annulus $A \subset E(V)$ sometimes 
obstructs the existence of essential annuli of some other types 
disjoint from $A$. 
In the following part of this section, we will be concerned with 
this feature. 
 
\begin{lemma}
\label{lem:uniqueness of Type 2-1}
Let $V$ be a handlebody of genus two embedded in $S^3$ 
such that $E(V)$ is boundary-irreducible and atoroidal.  
If $E(V)$ admits disjoint essential annuli $A, A'$ of Type $2$-$1$, 
then $A'$ is isotopic to $A$ in $E(V)$. 
\end{lemma}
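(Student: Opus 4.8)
The plan is to reduce the statement to an isotopy of the boundary curves $\partial A$ and $\partial A'$ on $\Sigma := \partial V = \partial E(V)$, and then to upgrade this boundary isotopy to an isotopy of annuli using atoroidality. First I would record the boundary structure of a Type $2$-$1$ annulus. Writing $V = \Nbd(\Gamma)$ for the relevant handcuff graph, the disk $D$ is bounded by an unknotted loop of $\Gamma$ and is punctured once by the other loop; hence $A = D \cap E(V)$ has $\partial A = a_1 \cup a_2$, where $a_1$ is a longitude of the handle carrying the bounding loop (so $a_1$ is primitive in $V$) and $a_2$ is a meridian of the handle carrying the puncturing loop (so $a_2 = \partial m$ for a non-separating disk $m$ of $V$). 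Writing $\partial A' = a_1' \cup a_2'$ and $a_2' = \partial m'$ analogously, and using $A \cap A' = \emptyset$, the four curves $a_1, a_2, a_1', a_2'$ are pairwise disjoint on $\Sigma$; in particular the non-separating disks $m, m'$ can be taken disjoint.

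The first key step is to show $a_2$ is isotopic to $a_2'$. Because $a_1$ is primitive, Corollary \ref{primitive} provides a \emph{unique} non-separating disk in $V$ disjoint from $a_1$, namely $m$. On the other hand $a_1 \cap \partial m' = \emptyset$ (all four boundary curves are disjoint), so $m'$ is also a non-separating disk disjoint from $a_1$; by uniqueness $m'$ is isotopic to $m$, whence $a_2' = \partial m'$ is isotopic to $a_2 = \partial m$. I note that this step also rules out the a priori possibility that $A$ and $A'$ are ``based at opposite handles'': if $a_2'$ were the meridian of the handle carrying $a_1$, then $a_1$ would be forced to meet $a_2'$, contradicting disjointness. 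Thus $A$ and $A'$ are based at the same handle, and after an isotopy we may assume $m = m'$ and $a_2 = a_2'$.

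It remains to show $a_1$ is isotopic to $a_1'$. Cutting $V$ along $m = m'$ yields a solid torus $T$ with two disk scars $\delta^+, \delta^-$ on $\partial T$ coming from $m$; both $a_1$ and $a_1'$ lie on $\partial T$, are disjoint from the scars and from each other, and each meets the meridian of $T$ (the meridian of the remaining handle) exactly once, so each is a longitude of $T$. Two disjoint longitudes are parallel on the torus $\partial T$, so $a_1$ and $a_1'$ cobound an annulus in $\partial T$; to see that they are isotopic on $\Sigma$ I would observe that the two scars are joined by an arc lying in $a_2$ (the curve along which the cut was made), and that this arc is disjoint from both $a_1$ and $a_1'$. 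Hence both scars lie in a single component of $\partial T \setminus (a_1 \cup a_1')$, so the complementary annulus is scar-free and realizes an isotopy from $a_1$ to $a_1'$ within $\Sigma \setminus a_2$, and thus in $\Sigma$. Finally, having isotoped $\partial A$ onto $\partial A'$, the union $A \cup A'$ is a torus in the atoroidal, boundary-irreducible manifold $E(V)$; it is therefore compressible or boundary-parallel, and in either case an innermost-disk and outermost-arc analysis shows that $A$ and $A'$ cobound a solid torus, so they are isotopic in $E(V)$. The main obstacle I anticipate lies in these last two steps: tracking the scar positions carefully enough to guarantee that the boundary isotopy takes place in $\Sigma \setminus a_2$ rather than merely on $\partial T$, and promoting the resulting boundary isotopy to an ambient isotopy of the annuli, where atoroidality of $E(V)$ is essential.
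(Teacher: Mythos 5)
Your first step (using Corollary \ref{primitive} to show the disk-bounding boundary components coincide, i.e.\ that $m$ and $m'$ are isotopic and the two annuli are based at the same handle) agrees with the paper's opening move, and your final step (glue up to a torus and use atoroidality to get a solid torus through which $A'$ slides onto $A$) is also the paper's endgame. The gap is in the middle step, where you try to prove that the two primitive boundary curves $a_1$ and $a_1'$ are isotopic on $\Sigma$ by a purely two-dimensional argument in the cut-open solid torus $T$. Your justification that both scars $\delta^+,\delta^-$ lie in the \emph{same} component of $\partial T \setminus (a_1 \cup a_1')$ rests on ``an arc lying in $a_2$ joining the two scars,'' but no such arc exists: when you cut $V$ along $m$, the curve $a_2 = \partial m$ becomes precisely the two boundary circles $\partial\delta^+ \cup \partial\delta^-$ of the scars, so $a_2$ contributes no arc connecting them, and nothing is left of the argument. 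The configuration you must exclude --- $a_1$ and $a_1'$ disjoint longitudes of $T$ with one scar in each complementary annulus --- is perfectly consistent with all the disjointness you have used; in that configuration $\Sigma \setminus (a_1 \cup a_1')$ is connected, so $a_1$ and $a_1'$ are \emph{not} isotopic on $\Sigma$, and your proof cannot proceed. In other words, disjointness of the four boundary circles alone is not enough; you need genuinely three-dimensional input from the disjointness and essentiality of the annuli themselves together with boundary-irreducibility and atoroidality of $E(V)$.

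This is exactly where the paper does its real work: it forms $\hat{A} = A \cup A'$ glued along the common disk-bounding curve, pushes it into $\Int(E(V))$, shows it is incompressible (Lemma 1.4 of \cite{KO}), and, if it is not boundary-parallel, invokes the classification Theorem \ref{thm:classification of essential annuli}. Boundary-irreducibility and atoroidality exclude Types 1, 3-1 and 4-2 (Lemma \ref{lem:non-existence of Types 1, 3-1 and 4-2}); Type 2 is impossible since both components of $\partial\hat{A}$ are primitive; Types 3-2 and 4-1 have parallel boundary circles, giving the desired isotopy; and the dangerous case --- Type 3-3, which is precisely your scars-in-different-annuli configuration, with $a_1$ and $a_1'$ (in your labels) on opposite sides of a separating disk --- is killed by the uniqueness statement of Lemma \ref{2.2}: the unique non-separating disk disjoint from one primitive curve would have to be the meridian disk of the solid torus containing the other, forcing an intersection with a curve known to be disjoint from it. To repair your proof you would need to supply an argument of this strength; the elementary scar-tracking does not suffice. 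A smaller remark on your last step: after the boundaries agree, you should also verify, as the paper does, that the disk-bounding curve meets the meridian of the solid torus bounded by the torus $A \cup A'$ exactly once (this uses that this curve bounds a disk in $V$ while $E(V)$ is boundary-irreducible); without it, ``cobound a solid torus'' does not by itself yield the isotopy of $A'$ onto $A$ through that solid torus.
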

\begin{proof}
Let $A, A' \subset E(V)$ be disjoint essential annuli of Type $2$-$1$. 
Set $\partial A = a_1 \sqcup a_2$ and $\partial A' = a_1' \sqcup a_2'$, where 
$a_1$ and $a_1'$ bound disks in $V$, and 
$a_2$ and $a_2'$ are primitive curves on $\partial V$. 
Since $a_2$ is primitive, we have $a_1 = a_1'$ by Corollary \ref{primitive}. 
By slightly pushing the interior of the annulus $\hat{A} = A \cup A'$ into $\Int ( E(V) )$, 
$\hat{A}$ becomes an incompressible annulus in $E(V)$ with 
$\partial \hat{A} = a_2 \cup a_2'$ (see Figure \ref{fig:pushing_annulus}). 
We will see that $a_2$ is isotopic to $a_2^\prime$ on $\partial V$. 
If $\hat{A}$ is parallel to $\partial E(V)$, then clearly 
$a_2$ is isotopic to $a_2^\prime$. 
Suppose that $\hat{A}$ is not parallel to $\partial E(V)$.  
By Lemma 1.4 of \cite{KO}, $\hat{A}$ is essential in $E(V)$. 
Recall that each of $a_2$ and $a_2^\prime$ is primitive (and so non-separating) in $\partial V$.  
Since $E(V)$ does not admit none of Types 1, 3-1 and 4-2 annuli by 
Lemma \ref{lem:non-existence of Types 1, 3-1 and 4-2}, 
such an annulus $\hat{A}$ is one of Types 3-2, Type 3-3 and 4-1 annuli. 
If $\hat{A}$ one of Types 3-2 and 4-1 annuli, 
the boundary circles of $\hat{A}$ is parallel on $\partial V$ by the definitions. 
Suppose that $\hat{A}$ is a Type 3-3 annulus. 
By the definition of a Type 3-3 annulus, 
there exists a separating essential disk $E$ in $V$\\ 
\begin{center}
\begin{overpic}[width=9cm,clip]{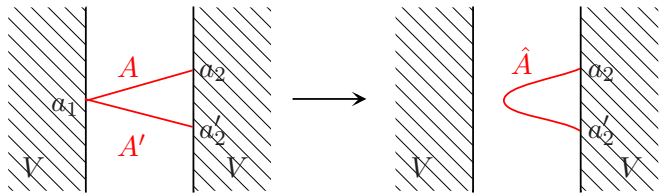}
  \linethickness{3pt}
  \put(10,10){$V$}
  \put(87,10){$V$}
  \put(157,10){$V$}
  \put(236,10){$V$}
  \put(21,35){$a_1$}
  \put(77,48){$a_2$}
  \put(77,25){$a_2'$}
  \put(224,48){$a_2$}
  \put(224,25){$a_2'$}
  \put(46,48){\color{red} $A$}
  \put(46,18){\color{red} $A'$}
  \put(195,49){\color{red} $\hat{A}$}
\end{overpic}
\captionof{figure}{The annulus $\hat{A}$.}
\label{fig:pushing_annulus}
\end{center}
that cuts $V$ off into two solid tori 
$X_1$ and $X_2$ with $a_2 \subset \partial X_1$ and $a_2' \subset \partial X_2$. 
Note that $a_2$ ($a_2'$, respectively) does not bound a disk in $X_1$ ($X_2$, respectively). 
By Lemma \ref{2.2}, the meridian disk $D$ of $X_2$ is the unique non-separating disk in $V$ 
disjoint from $a_2$. 
Thus we have $\partial D = a_1$. 
This implies that $a_1 \cap a_2' \neq \emptyset$, a contradiction. 
Therefore, in any case we have $a_1 = a_1'$ and $a_2 = a_2'$. 
By moving $A'$ by isotopy so that $a_2 = a_2^\prime$ and 
slightly pushing it into $\Int (E(V))$, 
$\hat{A}$ becomes a torus in $E(V)$. 
Since $E(V)$ is atoroidal and irreducible, $\hat{A}$ bounds a solid torus $X$ in $E(V)$. 
Since $a_1$ bounds a disk in $V$, 
$a_1$ intersects the meridian of $X$ once and transversely. 
In consequence $A'$ can be isotoped to $A$ through $X$. 
\end{proof}

\begin{lemma}
\label{lem:Types 2-1 and 3-2}
Let $V$ be a handlebody of genus two embedded in $S^3$.  
If there exists an essential annulus $A$ of Type $2$-$1$ in $E(V)$, 
then $E(V)$ does not admit an essential annulus of Type $3$-$2$ disjoint from $A$. 
\end{lemma}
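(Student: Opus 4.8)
The plan is to argue by contradiction, leveraging the structural constraints imposed by the two annulus types on meridian disks of $V$. Suppose $A$ is an essential annulus of Type $2$-$1$ and $A'$ is an essential annulus of Type $3$-$2$ in $E(V)$ with $A \cap A' = \emptyset$. First I would set up notation for $A$: write $\partial A = a_1 \sqcup a_2$, where $a_1$ bounds an essential disk $E_1$ in $V$ and $a_2$ is a primitive curve on $\partial V$. By Corollary \ref{primitive}, there is a unique non-separating disk $D$ in $V$ disjoint from $a_2$, and it will be important to track how $D$ and $E_1$ sit relative to the data coming from $A'$.

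Next I would unpack the definition of the Type $3$-$2$ annulus $A'$. By definition there is a solid torus $X \subset S^3$ and a properly embedded arc $\tau$ in $E(X)$ with $\tau \cap A' = \emptyset$, such that $V = X \cup \Nbd(\tau)$ and $\partial A'$ consists of curves on $\partial X$ that do not bound a disk in $X$ (this is the defining condition separating Type $3$-$2$ from $3$-$1$). The key geometric observation I want to extract is that $\partial A'$, lying on $\partial X$ and running parallel to the core of $X$, forces a constraint on which curves on $\partial V$ can be disjoint from $A'$: in particular the meridian disk of $X$ survives as an essential disk of $V$ disjoint from $A'$, and its boundary is non-separating. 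I would then compare this with the disk structure forced by $A$, using Lemma \ref{2.2} and Lemma \ref{case2} to pin down the unique disks disjoint from the relevant curves.

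The main obstacle, I expect, is showing that the disjointness $A \cap A' = \emptyset$ cannot actually be realized once both annuli are put in minimal position — one must rule out the coexistence of the ``cut-system'' of $A$ (the separating disk $E := \Cl(\partial \Nbd(a_2 \cup D_0) \setminus \partial V)$ produced as in Corollary \ref{primitive}, splitting $V$ into two solid tori) with the solid-torus-plus-arc decomposition $V = X \cup \Nbd(\tau)$ coming from $A'$. Concretely, I would show that if both annuli are disjoint and essential, then $\partial A'$ must be disjoint from $a_1$ and $a_2$, hence disjoint from the separating disk $E$; an innermost/outermost disk argument (paralleling the ones in Lemmas \ref{2.2} and \ref{case2}) would then locate $\partial A'$ entirely within one of the two solid tori $X_1, X_2$ cut off by $E$, contradicting either the non-disk-bounding condition on $\partial A'$ or the uniqueness of the non-separating disk $D$ disjoint from the primitive curve $a_2$.

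To finish, I would argue that the forced position of $\partial A'$ makes $A'$ either compressible or boundary-parallel in $E(V)$, contradicting the essentiality of $A'$ as a Type $3$-$2$ annulus. The cleanest route is probably to observe that the separating disk $E$ disjoint from $A$ meets $V = X \cup \Nbd(\tau)$ in a way incompatible with $\tau$ being trivial-free relative to $A'$, so that $A'$ can be capped off or isotoped into $\partial V$; this collapses the Type $3$-$2$ structure and yields the contradiction. I anticipate the delicate bookkeeping to be in verifying that the intersection patterns on the four-holed sphere (as in Figure \ref{fig:outermost}) genuinely preclude a disjoint configuration, rather than in any single deep idea.
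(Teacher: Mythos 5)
Your proposal has a genuine gap, and two of its concrete assertions fail. First, the ``key geometric observation'' that the meridian disk of the solid torus $X$ in the Type $3$-$2$ structure is disjoint from $A'$ is false (unless you mean disjointness of a disk in $V$ from an annulus in $E(V)$, which is vacuous): by definition $\partial A'$ does not bound a disk in $X$, so its slope on $\partial X$ is not meridional and therefore meets the boundary of every meridian disk of $X$; for instance, when $A'$ is a cabling annulus (case (i) of Lemma \ref{lem:A is an essential annulus in E(X)}) the slope is integral and meets the meridian once. The unique non-separating disk of $V$ disjoint from $\partial A'$ is the cocore of the $1$-handle $\Nbd(\tau)$, not a meridian disk of $X$ (Corollary \ref{primitive}, Lemma \ref{lem:uniqueness of X for Type 3-2}). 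Second, the inference ``$\partial A'$ is disjoint from $a_1 \cup a_2$, hence disjoint from the separating disk $E$'' is a non sequitur (a curve avoiding $a_1\cup a_2$ can cross $\partial E$ essentially), and even granting it, locating the components of $\partial A'$ in one of the solid tori cut off by $E$ yields no contradiction: both components of $\partial A'$ are primitive curves, and curves parallel to $a_2$ in the genus-one side containing $a_2$ are disjoint from $a_1$, $a_2$ and $\partial E$ while bounding no disk in $V$. So your plan of showing that the disjoint position ``cannot be realized'' by intersection bookkeeping on the four-holed sphere is aimed at a false statement: disjoint boundary configurations do exist, and the contradiction must come from global information about the embedding in $S^3$, which your outline never invokes.

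The missing idea is exactly the mechanism of the paper's proof. Let $D \subset V$ be the disk with $\partial D = a_1$. Cutting $V$ along $D$ produces a solid torus $X_0$ (a neighborhood of the trivial constituent loop of the handcuff graph), and $A$ together with a parallel copy of $D$ is a disk in $E(X_0)$ bounded by $a_2$; hence $X_0$ is unknotted and $a_2$ is its preferred longitude. The curves $a_1'$ and $a_2'$ survive the cut, are essential on the torus $\partial X_0$, and are disjoint from $a_2$ (this is the only place where $A \cap A' = \emptyset$ is used), so they are preferred longitudes as well; by Kobayashi's lemma $A'$ is then a boundary-parallel annulus in the solid torus $E(X_0)$. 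Exactly as in the proof of Lemma \ref{lem:A is an essential annulus in E(X)}, $A'$ now admits a compressing disk in the complement of the parallelism region and a boundary-compressing disk inside it, and the single $1$-handle $\Nbd(D)$ that is re-attached to $X_0$ to recover $V$ can obstruct only one of the two; hence $A'$ remains compressible or boundary-compressible in $E(V)$, contradicting its essentiality. Nothing in your outline produces this longitudinal control on $\partial A'$ --- in particular the cabling case (i) cannot be excluded by the disk-system combinatorics of Lemmas \ref{2.2} and \ref{case2} alone --- so the final step of your plan (``$A'$ can be capped off or isotoped into $\partial V$'') is asserted rather than proved.
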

\begin{proof}
Let $A \subset E(V)$ be an essential annulus $A$ of Type $2$-$1$. 
Suppose that there exists an essential Type 3-2 annulus $A' \subset E(V)$. 
We will show that $A \cap A' \neq \emptyset$. 
Set $\partial A = a_1 \sqcup a_2$ and $\partial A' = a_1' \sqcup a_2'$, 
where $a_1$ bounds a disk in $V$ and 
$a_2$ is a primitive curve on $\partial V$. 
We note that by the definition of a Type 3-2 annulus, 
$a_1'$ and $a_2'$ are parallel non-separating curves on $\partial V$. 
Suppose for a contradiction that $A \cap A' = \emptyset$. 
Let $D$ be a disk in $V$ with $\partial D = a_1$. 
By cutting $V$ along $D$, we get an unknotted solid torus $X$ in $S^3$. 
Then $a_1$, and so $a_i^\prime$ ($i = 1, 2$), is the preferred longitude of 
$\partial X$. 
We note that $A'$ is a boundary-parallel annulus in $E(X)$. 
Now $V$ is obtained by attaching a 1-handle $\Nbd(D)$ to $X$, however, 
anyhow we attach such a 1-handle to $X$, 
$A^\prime$ becomes boundary-compressible in $E(V)$. 
This contradicts the assumption that $A^\prime$ is essential in $E(V)$. 
\end{proof}

\begin{lemma}
\label{lem:A is an essential annulus in E(X)}
Let $V$ be a handlebody of genus two embedded in $S^3$, 
and let $A \subset E(V)$ be an essential annulus of Type $3$-$2$. 
Let $X$ be a solid torus for $A$ in the definition of a Type $3$-$2$ annulus. 
Then exactly one of the following $\mathrm{(i)}$ and $\mathrm{(ii)}$ holds: 
\begin{enumerate}
\renewcommand{\labelenumi}{(\roman{enumi})}
\item
$X$ is a regular neighborhood of a torus knot or a cable knot, 
and $A$ is the cabling annulus for $X$. 
\item
$A$ is an incompressible annulus in $E(X)$ parallel to an annulus $A'$ on $\partial X$. 
There exists and arc $\tau$ contained in the parallelism
region between $A$ and $A'$, with endpoints on $A'$, such that 
$V = X \cup \Nbd (\tau)$. 
\end{enumerate}
\end{lemma}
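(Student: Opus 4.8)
The plan is to analyse $A$ directly inside the solid-torus exterior $E(X)$, using as leverage the fact that $A$ is essential in the smaller manifold $E(V)=\Cl(E(X)\setminus\Nbd(\tau))$. Throughout, recall that $E(X)$ is the exterior of a solid torus in $S^3$, hence irreducible, and that $V=X\cup\Nbd(\tau)$ is obtained from $X$ by attaching the $1$-handle $\Nbd(\tau)$, where $\tau$ is a proper arc of $E(X)$ with $\tau\cap A=\emptyset$ and $\partial A$ is essential but non-meridional on $\partial X$.

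First I would show that $A$ is incompressible in $E(X)$. Suppose $D\subset E(X)$ is a compressing disk for $A$, so $\partial D$ is the core of $A$ and $\partial D\cap\Nbd(\tau)=\emptyset$. Let $D_\tau$ be the cocore disk of the $1$-handle $\Nbd(\tau)$. Since $\partial D\subset A$ is disjoint from $\Nbd(\tau)$, the intersection $D\cap D_\tau$ consists only of circles; an innermost such circle bounds a disk on $D_\tau$ and a disk on $D$, whose union is a sphere that, by irreducibility of $E(X)$, bounds a ball, allowing us to reduce $D\cap D_\tau$. After finitely many steps $D\cap D_\tau=\emptyset$, so $D$ can be isotoped off $\Nbd(\tau)$ into $E(V)$, producing a compressing disk for $A$ in $E(V)$ and contradicting essentiality there. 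Hence $A$ is incompressible in $E(X)$; as $\partial A$ is essential on the torus $\partial X$, an incompressible annulus that is not boundary-parallel is automatically boundary-incompressible, so in that event $A$ is essential in $E(X)$.

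Next I would split into two cases according to whether $A$ is boundary-parallel in $E(X)$. If $A$ is not boundary-parallel, then $A$ is an essential annulus in the exterior of the solid torus $X$ with non-meridional boundary slope. By the classification of essential annuli in the exterior of a solid torus in $S^3$ — equivalently, of essential annuli in a knot exterior with non-meridional boundary slope (see \cite{Jo1}) — the core knot of $X$ must be a nontrivial torus knot or a cable knot and $A$ must be its cabling annulus, which is alternative (i). Note that if $X$ were unknotted, $E(X)$ would be a solid torus, which admits no essential non-boundary-parallel annulus, so this case indeed forces $X$ to be knotted.

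Finally, suppose $A$ is boundary-parallel in $E(X)$, so that $A$ cobounds a parallelism region $R\cong A\times I$ with an annulus $A'\subset\partial X$, and set $Q=\Cl(E(X)\setminus R)$. Since $A$ separates $E(X)$ into $R$ and $Q$ and $\tau$ is connected and disjoint from $A$, the arc $\tau$ lies entirely in $R$ or entirely in $Q$. If $\tau\subset Q$, then $R$ is disjoint from $\Nbd(\tau)$, so $R\subset E(V)$ and $A$ would remain boundary-parallel in $E(V)$, contradicting the essentiality of $A$ in $E(V)$. Therefore $\tau\subset R$, and after a small isotopy its endpoints lie in the interior of $A'=R\cap\partial X$, which is alternative (ii). Since a cabling annulus is never boundary-parallel, the two alternatives are mutually exclusive, yielding the claimed ``exactly one''. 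I expect the main obstacle to be the classification step invoked in the non-boundary-parallel case, which rests on the Seifert-fibered structure of torus- and cable-knot exteriors and on characteristic submanifold theory; by contrast, the essentiality of $A$ in $E(V)$ is precisely what makes the boundary-parallel case fall into place, by forbidding $\tau$ from lying on the far side of $A$.
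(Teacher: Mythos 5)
Your handling of the boundary-parallel case (forcing $\tau$ into the parallelism region, since otherwise the region survives in $E(V)$ and exhibits $A$ as boundary-parallel there) is correct, and in fact makes explicit a point the paper leaves implicit. But your first step contains a genuine gap, and it is exactly the step where the paper does real work. You claim that a compressing disk $D$ for $A$ in $E(X)$ can be isotoped off $\Nbd(\tau)$ once $D\cap D_\tau=\emptyset$, where $D_\tau$ is the cocore. This does not follow: after removing the circles of $D\cap D_\tau$, the pieces of $D$ inside the two half-handles (between $D_\tau$ and the attaching disks of the $1$-handle on $\partial X$) need not be removable. For instance, a component of $D\cap\Nbd(\tau)$ can be a disk parallel to the cocore; it is disjoint from $D_\tau$, its boundary is essential on the side annulus of the handle, and pushing it out of the handle would force $D$ to cross the attaching disk, i.e.\ to cross $\partial X$, which a properly embedded disk in $E(X)$ cannot do. More fundamentally, compressibility does not automatically descend under drilling: the core of $A$ can a priori bound a disk in $E(X)$ that every isotopy forces through $\Nbd(\tau)$, so ``$A$ incompressible in $E(V)$ $\Rightarrow$ $A$ incompressible in $E(X)$'' is a substantive claim, not a general-position fact. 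The paper's proof concedes this: it compresses $A$ to see that both components of $\partial A$ bound disks in $E(X)$, uses the Type $3$-$2$ definition (essential, non-meridional boundary on $\partial X$) to force $X$ to be unknotted with $\partial A$ the preferred longitudes, invokes Lemma 3.1 of \cite{Kobayashi} to get a parallelism region $Y$, and then plays a compressing disk in $E(X)\setminus Y$ against a boundary-compressing disk in $Y$: since $\tau$ misses $A$, it lies on one side, and whichever side it lies on, the disk on the other side survives in $E(V)$ and contradicts essentiality. Nothing in your sketch replaces this two-sided argument.

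There is a second, smaller flaw in your dichotomy. The assertion that an incompressible, non-boundary-parallel annulus with essential boundary is ``automatically boundary-incompressible'' fails when $E(X)$ is a solid torus: the vertical (cabling-type) annuli of a solid torus are incompressible and not boundary-parallel, yet boundary-compressible, since every incompressible and boundary-incompressible surface in a handlebody is a disk. Your parenthetical remark that a solid torus ``admits no essential non-boundary-parallel annulus'' is true only with the boundary-incompressibility built into ``essential,'' so it does not dispose of these annuli, and your appeal to the classification of essential annuli in knot exteriors does not reach them. The paper treats this separately: when $A$ is incompressible but not essential in $E(X)$, it cites \cite{Kobayashi} in the solid-torus case and Lemma 15.18 of \cite{Burde-Zieschang} in the knotted case to conclude boundary-parallelism, rather than relying on the false implication above.
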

\begin{proof}
Set $\partial A = a_1 \sqcup a_2$. 
We first show that $A$ is incompressible in $E(X)$. 
Suppose that there exists a compressing disk $D$ for $A$. 
We note that $\partial D$ is the core of $A$. 
If $E(X)$ is not the solid torus, 
$a_i$ ($i=1,2$) is a non-essential simple closed curve on $\partial X$. 
In this case each component of $\partial A$ bounds a disk in $X$, so does in $V$. 
This contradicts the assumption that $A$ is a Type 3-2 annulus. 
Thus $E(X)$ is the solid torus. 
If at least one of $a_1$ and $a_2$ are non-essential on $\partial X$, 
we get a contradiction by the same reason as above. 
Thus both $a_1$ and $a_2$ are the preferred longitudes of $X$. 
Then $A$ is parallel to an annulus $A'$ on $\partial X$ by 
Lemma 3.1 of Kobayashi \cite{Kobayashi}.
Let $Y$ be the parallelism region between $A$ and $A'$. 
There exist a boundary-compressing disk $D'$ for $A$ in $E(X)$. 
We note that $D$ lies in $E(X) - Y$ while $D'$ lies in $Y$. 
Thus $A$ remains incompressible or boundary-compressible even in $E(V)$, 
which is a contradiction. 
The annulus $A$ is thus incompressible in $E(X)$. 

If $A$ is essential in $E(X)$, 
the the property $\mathrm{(i)}$ follows from Lemma 15.26 of Burde-Zieschang \cite{Burde-Zieschang}.
Suppose $A$ is not essential in $E(X)$. 
Then again by Kobayashi \cite{Kobayashi} in the case where $E(X)$ is the solid torus, and 
by  Lemma 15.18 of Burde-Zieschang \cite{Burde-Zieschang} in the other case, 
$A$ is parallel to $\partial X$. Hence the the property $\mathrm{(ii)}$ follows. 
\end{proof}

Lemma \ref{lem:A is an essential annulus in E(X)} indicates that  
the boundary of a Type 3-2 annulus in the exterior of a handlebody $V$ 
in $S^3$ consists of parallel primitive curves on $\partial V$. 

\begin{lemma}
\label{lem:uniqueness of X for Type 3-2}
Let $V$ be a handlebody of genus two embedded in $S^3$, 
and let $A \subset E(V)$ be an essential annulus of Type $3$-$2$. 
Then, for $A$, the solid torus $X$ in the definition of a Type $3$-$2$ annulus is 
uniquely determined. 
\end{lemma}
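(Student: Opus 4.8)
The plan is to prove uniqueness by analyzing the two structural possibilities for $X$ provided by Lemma~\ref{lem:A is an essential annulus in E(X)} and showing that in each case the solid torus $X$ is recovered canonically from the annulus $A$ together with the handlebody $V$. Suppose $X$ and $X'$ are two solid tori, each serving as the solid torus in the definition of a Type $3$-$2$ annulus for the same $A \subset E(V)$. By Lemma~\ref{lem:A is an essential annulus in E(X)}, each of $X, X'$ falls into case (i) (a regular neighborhood of a torus or cable knot, with $A$ its cabling annulus) or case (ii) ($A$ boundary-parallel in $E(X)$ with $V = X \cup \Nbd(\tau)$). The first thing I would do is observe that the core curve of $A$, pushed into $E(V)$, is a knot in $S^3$ whose isotopy type is intrinsic to $A$; in case (i) this core is exactly the torus/cable knot whose neighborhood is $X$, so $X = \Nbd(\text{core})$ is determined, while in case (ii) the arc $\tau$ and the parallelism region $Y$ are what distinguishes $X$ from the ambient $V$.

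The key steps, in order: first I would rule out that $X$ and $X'$ fall into different cases. If $X$ is of type (i), then $A$ is a cabling annulus, hence essential in $E(X)$ and \emph{not} boundary-parallel in $E(X)$; if $X'$ were of type (ii) then $A$ would be boundary-parallel in $E(X')$, and I would derive a contradiction by comparing the incompressible-but-essential versus boundary-parallel behavior of the \emph{same} annulus $A$, using that both $X$ and $X'$ are recovered from $A$ by filling in a solid torus on one side of $A$ in $E(V)$. Second, in the case where both $X, X'$ are of type (i), the solid torus is forced to be the regular neighborhood of the core of $A$, which is a well-defined knot up to isotopy, so $X = X'$. Third, in the case where both are of type (ii), I would use that $X$ is obtained from $V$ by removing the handle $\Nbd(\tau)$, equivalently $V \setminus X$ deformation retracts onto $\tau$; since Lemma~\ref{lem:A is an essential annulus in E(X)}(ii) pins down the parallelism region $Y$ from $A$ and $\partial V$, the decomposition $V = X \cup \Nbd(\tau)$ is determined, whence $X = X'$.

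The main obstacle I expect is the mixed case and, within the type-(ii) case, showing that the arc $\tau$ (equivalently the $1$-handle removed from $V$ to obtain $X$) is isotopically unique. A priori there could be inequivalent ways of writing $V$ as a solid torus plus a tunnel; I would control this by exploiting that the annulus $A$ is essential in $E(V)$ but becomes boundary-parallel precisely in $E(X)$, so the parallelism region $Y$ in $\partial X$ together with $A$ must contain the cocore disk of the removed handle. Concretely, the boundary-compressing disk $D'$ for $A$ (which exists in $E(X)$ but not in $E(V)$, as in the proof of Lemma~\ref{lem:A is an essential annulus in E(X)}) detects $\tau$, and I would argue that any two such data give isotopic tunnels. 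Together with the irreducibility of $E(V)$ and the uniqueness of the knot type of the core of $A$, this forces $X = X'$ up to isotopy, completing the proof.
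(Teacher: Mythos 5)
Your outline leaves the crux unproved, and it misses the one fact that makes the lemma easy: primitivity of $\partial A$. As the paper notes immediately after Lemma \ref{lem:A is an essential annulus in E(X)}, the two boundary circles $a_1, a_2$ of a Type $3$-$2$ annulus are parallel \emph{primitive} curves on $\partial V$. By Corollary \ref{primitive}, a primitive curve on the boundary of a genus-two handlebody is disjoint from a \emph{unique} non-separating disk in that handlebody. Since $V = X \cup \Nbd(\tau)$ with $A \cap \tau = \emptyset$, the cocore of the $1$-handle $\Nbd(\tau)$ is a non-separating disk in $V$ disjoint from $a_1 \cup a_2$; by the uniqueness just quoted it is the only such disk, and cutting $V$ along it recovers $X$. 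That is the paper's entire proof — the dichotomy (i)/(ii) of Lemma \ref{lem:A is an essential annulus in E(X)} plays no role in it — and your proposal, which never uses primitivity, cannot reach this conclusion by the route you sketch.

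Concretely, each of your three steps has a genuine gap. In the type-(i) case, knowing that the core of $A$ is a torus or cable knot whose isotopy class in $S^3$ is intrinsic to $A$ does not determine $X$ as required: $X$ sits \emph{inside} $V$ (with $E(X) = E(V) \cup \Nbd(\tau)$), and the lemma asks for uniqueness of $X$ relative to the configuration $(V, A)$, so you would still need to produce an isotopy between $X$ and $X'$ respecting that pair — which is the actual content, not a consequence of the knot type of the core. In the mixed case, your intended contradiction compares "$A$ is essential in $E(X)$" with "$A$ is boundary-parallel in $E(X')$", but these are statements about the same annulus in two \emph{different} manifolds, and your bridging claim that $X$ and $X'$ are "recovered from $A$ by filling in a solid torus on one side of $A$ in $E(V)$" is false: $X$ does not lie on a side of $A$ in $E(V)$ at all, so no contradiction follows as stated. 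Finally, in the type-(ii) case you correctly flag the real danger — inequivalent tunnel decompositions $V = X \cup \Nbd(\tau)$ — but your resolution ("the boundary-compressing disk $D'$ detects $\tau$, and any two such data give isotopic tunnels") is precisely the statement to be proved and is nowhere argued. The uniqueness of the cocore disk, i.e., Corollary \ref{primitive} (itself resting on Lemma \ref{2.2}) applied to the primitive curve $a_1$, is what closes this gap, and without it your outline cannot be completed.
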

\begin{proof}
Set $\partial A = a_1 \sqcup a_2$. 
Recall that $a_1$ and $a_2$ are parallel primitive curves on $\partial V$. 
By Corollary \ref{primitive}, there exists a unique non-separating disk 
disjoint from $a_1 \cup a_2$. 
This implies that the solid torus $X$ in the definition of a 
Type 3-2 annulus is uniquely determined. 
Indeed, $X$ is obtained by cutting $V$ off by that the non-separating disk.  
\end{proof}

Let $V$ be a handlebody of genus two embedded in $S^3$, 
and let $A \subset E(V)$ be an essential annulus of Type 4-1. 
The two circles $\partial A$ are parallel on $\partial V$, thus 
cobound an annulus $A'$ on $\partial V$. 
We note that the union $A \cup A^\prime$ is the boundary of 
a regular neighborhood $\Nbd(K)$ of an 
Eudave-Mu\~noz knot $K$. 
The annulus $A$ cuts $E(V)$ off into a solid torus $\Nbd(K)$ and 
a handlebody $W$ of genus two. 
The core $k_{A'}$ of the annulus $A'$ is an essential simple closed curve on $V$. 
Set $P := \Cl (\partial V \setminus A')$. 
$P$ is a two-holed torus. (Recall Figure \ref{fig:type4}).

\begin{lemma}
\label{Case3-lemm}
The relative handlebody $(V, k_{A^\prime})$ admits no non-separating essential annuli. 
\end{lemma}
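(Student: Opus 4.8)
The plan is to argue by contradiction: suppose $(V,k_{A'})$ admits a non-separating essential annulus $B$, and derive a contradiction with the structure of the Eudave-Mu\~noz knot $K$. First I would record two structural facts about the configuration. Since $P$ is essential in $E(K)=V\cup_P W$, a compressing disk for $P$ lying inside $V$ would compress $P$ in $E(K)$; hence $P$ is incompressible in $V$. Consequently $k_{A'}$ bounds no disk in $V$, and moreover $k_{A'}$ is \emph{not} primitive: were it primitive, the separating disk $E=\Cl(\partial\Nbd(k_{A'}\cup D_0)\setminus\partial V)$ built from a dual disk $D_0$ as in the proof of Corollary \ref{primitive} would be a compressing disk for $P$, a contradiction. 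In particular $\partial V\setminus k_{A'}$ is incompressible, so $(V,k_{A'})$ is a boundary-irreducible relative handlebody and Lemma \ref{rel-handlebody} applies.

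Next I would analyze $\partial B=c_1\cup c_2$. Using that the boundary map $H_2(V,\partial V;\mathbb{Z}/2)\to H_1(\partial V;\mathbb{Z}/2)$ is injective, $B$ is non-separating precisely when $[c_1]+[c_2]\neq 0$ in $H_1(\partial V;\mathbb{Z}/2)$. Each component of $\partial B$ is essential on $\partial V$ and disjoint from $k_{A'}$; a component lying in $A'$ is therefore parallel to $k_{A'}$, and if both components were of this kind we would get $[c_1]+[c_2]=0$. Hence at least one component of $\partial B$ is an essential curve carried by the incompressible two-holed torus $P$.

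The naive way to finish is to boundary-compress $B$: since $V$ is a handlebody one can find a meridian disk meeting $B$ in a single spanning arc, and boundary-compressing along it produces a meridian disk $\tilde D$ with $[\partial\tilde D]=[c_1]+[c_2]\neq 0$, that is, a \emph{non-separating} disk. A non-separating disk disjoint from $k_{A'}$ would finish the argument at once, since its boundary, being carried by $P$ (or parallel to $k_{A'}$), would either compress $P$ or show that $k_{A'}$ bounds a disk, both impossible by the first paragraph. However, the condition that $B$ be essential \emph{rel} $k_{A'}$ is exactly what forbids a $k_{A'}$-avoiding boundary-compression: every spanning-arc disk for $B$ must cross $k_{A'}$, so the disk $\tilde D$ produced this way meets $k_{A'}$ and yields no contradiction. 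This is the crux of the lemma.

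To break the impasse I would instead exploit the simplicity of the Eudave-Mu\~noz exterior: $E(K)$ is atoroidal and anannular (indeed hyperbolic, by Eudave-Mu\~noz \cite{EM2}), so in the language of Section \ref{subsec:W-decompositions} its W-system is empty and $P$ is its essential splitting surface. Viewing $B$ as an incompressible annulus in $E(K)=V\cup_P W$ with $\partial B$ essential on $P$, I would show that $\partial B$ must cobound an annulus in $P$; then $c_1$ and $c_2$ are parallel on $\partial V$, forcing $[c_1]+[c_2]=0$ and contradicting that $B$ is non-separating. The hard part, which I expect to be the main obstacle, is precisely this reduction: the boundary of $B$ lies on the \emph{interior} surface $P$ rather than on $\partial E(K)$, so anannularity cannot be quoted directly. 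I would make it rigorous by capping or doubling $B$ along $P$ — using that $P$ is incompressible on both the $V$- and the $W$-side to build from $B$ a genuine closed essential torus, or a genuine essential annulus, in $E(K)$ — and by invoking the non-integral slope condition on $\partial P$ from \cite{EM2} to rule out the residual degenerate cases in which $c_1$ and $c_2$ fail to be parallel on $P$.
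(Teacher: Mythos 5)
Your preliminary observations are correct (the $\mathbb{Z}/2$-homology criterion for non-separation, the fact that at least one component of $\partial B$ must lie essentially in $P$, and the accurate diagnosis of why a naive $k_{A'}$-avoiding boundary-compression is unavailable), but the proof has a genuine gap exactly where you flag it: the reduction from ``$c_1$, $c_2$ non-parallel in $P$'' to a contradiction is only a plan, and in the proposed form it is unlikely to close. Doubling or capping $B$ along sub-surfaces of $P$ produces a closed surface contained in the handlebody $V$, and atoroidality (or anannularity) of $E(K)$ gives no contradiction there: a torus inside a handlebody is always compressible in the irreducible manifold $E(K)$, so it merely bounds a solid torus or lies in a ball, and the ``residual degenerate cases'' you defer are in fact the entire content of the lemma. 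More fundamentally, you are invoking the wrong property of Eudave-Mu\~noz knots: hyperbolicity of $E(K)$ (from \cite{EM2}) constrains only properly embedded essential annuli and tori, whereas $\partial B$ lies on the interior surface $P$; no amount of anannularity of $E(K)$ by itself rules out such an annulus.

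The paper's proof instead changes the ambient manifold by Dehn surgery, which is the missing idea. Let $Y$ be the result of surgery on $K$ along the slope $k_{A'}$, i.e.\ attach a solid torus $X$ to $E(K)$ so that $k_{A'}$ bounds a meridian disk of $X$. Then the two boundary circles of $P$ bound disks $D_1, D_2 \subset X$, and $T := P \cup (D_1 \cup D_2)$ becomes a \emph{closed} essential torus in $Y$. The relevant structural input is Proposition 5.4 of \cite{EM} (the paper on Seifert fibered Dehn surgeries, not \cite{EM2}): each component $Y_1 \supset V$, $Y_2 \supset W$ of $Y$ cut along $T$ is a Seifert fibered space over the disk with two exceptional fibers. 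Your hypothetical annulus $B$, being non-separating in $V$, remains non-separating in $Y_1$, and this contradicts Theorem VI.34 of \cite{Jaco}, which forbids non-separating essential annuli in such Seifert pieces. Note how the surgery simultaneously solves both of your obstacles: it converts the troublesome interior surface $P$ into a closed essential torus to which classification results apply, and it replaces the inconclusive hyperbolicity of $E(K)$ by the rigid Seifert structure of the surgered pieces, which is the defining feature of the Eudave-Mu\~noz construction at the slope determined by $\partial P$.
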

\begin{proof}
We suppose for a contradiction that 
there exists a non-separating essential annulus 
$B$ in $(V, k_{A'})$. 
Set $\partial A$ = $a_1 \sqcup a_2$ and $\partial B$ = $b_1 \sqcup b_2$. 
Let $Y$ be the result of Dehn surgery on $K$ along the slope $k_{A'}$. 
That is, $Y$ is obtained by attaching a solid torus $X$ to $E(K)$ along their 
boundaries so that $k_{A'}$ bounds a disk in $X$.  
Let $D_i$ ($i=1,2$) be the disks in $X$ bounded by $a_i$. 
Then $T := P \cup (D_1 \cup D_2)$ is an essential torus in $Y$. 
By Proposition 5.4 of Eudave-Mu\~noz \cite{EM} 
each component $Y_i$ ($i=1,2$) of $Y$ cut off by $T$ is a Seifert fibered space with 
the base space a disk and two exceptional fibers. 
Without loss of generality we can assume that 
$V \subset Y_1$ and $W \subset Y_2$. 
Since $B$ is non-separating in $V$, so is in $Y_1$.
This contradicts Theorem {\rm V\hspace{-1pt}I}.34 of Jaco \cite{Jaco}, which 
asserts that there is no non-separating essential annuli in $Y_1$. 
\end{proof}

\section{Automorphisms extendable over $S^3$}
\label{sec:Automorphisms extendable over S3}
It is well known that every closed orientable $3$-manifold $M$ 
can be decomposed into two handlebodies $V$ and $W$ of the same genus $g$ for some $g \geq 0$.
That is, $V \cup W = M$ and $V \cap W = \partial V = \partial W = \Sigma_g$, a 
closed orientable surface of genus $g$.
Such a decomposition is called a {\it Heegaard splitting} for the manifold $M$.
The surface $\Sigma_g$ is called the {\it Heegaard surface} of the splitting, and 
$g$ is called the {\it genus} of the splitting.
By Waldhausen \cite{W} there exists a unique Heegaard splitting of a given genus for $S^3$ up to isotopy.
We say that an embedding $\iota: \Sigma_g \hookrightarrow S^3$ is {\it standard} 
if the image $\iota (\Sigma_g)$ is a Heegaard surface in $S^3$. 

\begin{definition}
\label{defi-standard}
Let $f$ be an automorphism of a closed orientable surface $\Sigma_g$. 
Then $f$ is said to be {\it extendable over $S^3$} if there exist an embedding 
$\iota: \Sigma_g \hookrightarrow S^3$ and an automorphism 
$\hat{f} : (S^3, \iota(\Sigma_g)) \rightarrow (S^3, \iota(\Sigma_g))$ 
satisfying 
$\hat{f} \circ \iota = \iota \circ f$. 
In particular, if we can choose the above $\iota$ to be standard, we say that 
$f$ is {\it standardly extendable over $S^3$}. 
\end{definition}
It is easy to see that the Dehn twist $T_{c}$ along a separating simple closed curve $c$ 
on $\Sigma_g$ is standardly extendable over $S^3$. 
Indeed, there exists a standard embedding $\iota : \Sigma_g \hookrightarrow S^3$ 
such that $\iota (c)$ bounds disks on both sides. 
On the other hand, the Dehn twist $T_{c'}$ along a non-separating simple closed curve $c'$ 
on $\Sigma_g$ is not extendable over $S^3$. 
This is because if $T_{c'}$ is extendable over $S^3$, there should exist a non-separating 
2-sphere in $S^3$ by Theorem \ref{McC} below, which is a contradiction. 

\begin{theorem}[McCullough \cite{McC}] \label{McC}
Let $M$ be a compact orientable $3$-manifold, and $c_1$, $c_2 , \ldots, c_n$ be 
mutually disjoint simple closed curves on $\partial M$. 
If $($powers of$)$ Dehn twists along $c_1$, $c_2 , \ldots, c_n$ extend to 
an automorphism of $M$, then for each $i \in \{1, 2, \ldots, n\}$, 
either $c_i$ bounds a disk in $M$, or for some
$j \neq i$, $c_i$ and $c_j$ cobound an incompressible annulus in $M$. 
\end{theorem}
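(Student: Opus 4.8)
The content of the statement is the forward implication; I would begin by recording the reverse constructions, since they pin down exactly what is to be found. We may assume every exponent is nonzero, so that $f|_{\partial M}$ is isotopic to $\tau = \prod_{j} T_{c_j}^{k_j}$ with $k_j \neq 0$, realized as the identity outside disjoint annular neighborhoods $N_j$ of the $c_j$ and as the $k_j$-fold twist on $N_j$. If $c_i$ bounds a disk $D$ in $M$, the twist supported in $\Nbd(D)$ extends $T_{c_i}^{k_i}$; if $c_i$ and $c_j$ cobound an incompressible annulus $A$, the spin along $\Nbd(A)$ extends $T_{c_i}^{k}T_{c_j}^{-k}$. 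Thus, fixing an index $i$ and assuming $c_i$ does \emph{not} bound a disk in $M$, the goal is to produce $j \neq i$ for which $c_i$ and $c_j$ cobound an incompressible annulus.

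My first reduction would be to the case where $M$ is irreducible and the relevant boundary component is incompressible. Using the prime decomposition of $M$, one may make the essential spheres disjoint from the $N_j$ and from the compressing disks and annuli under discussion, so the problem localizes to an irreducible summand; a curve $c_i$ that becomes compressible here simply bounds a disk, which is the first alternative. Next I would apply Bonahon's characteristic compression body (Theorem \ref{thm:Bonahon}) to the component of $\partial M$ carrying the curves. The compressible twisting curves are absorbed into the compression body $W$ and bound disks (the first alternative again), while the incompressible ones can be isotoped off the compressible part and into $\partial_- W$. This reduces matters to the boundary-irreducible Haken manifold $M' = \Cl(M \setminus W)$, on which, after composing $f$ with the inverses of the disk twists already accounted for, $f$ still restricts to a product of twists along the surviving $c_j$.

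In the boundary-irreducible case I would invoke the W-decomposition and Johannson's rigidity. By Lemmas \ref{W-system} and \ref{prop-simple}, the canonical annuli and tori cut $M'$ into simple pieces together with Seifert-fibered and $I$-bundle pieces, and by Johannson's theory $f$ may be isotoped to preserve this decomposition. The decisive point is that the twisting of $f$ cannot be supported on a simple piece: modulo twists along its essential annuli the mapping class group of such a piece is finite (the general form of Lemma \ref{rel-handlebody}), so a nontrivial power of a Dehn twist can be realized there only as an annulus twist. Hence the boundary twisting is concentrated in the $I$-bundle and Seifert pieces, where it is produced by twists along vertical essential annuli. Each such annulus is incompressible and has both boundary curves among the $c_j$; tracking which boundary curve carries the factor $T_{c_i}^{k_i}$ exhibits the required partner $c_j$ with $j \neq i$.

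The main obstacle I anticipate is precisely this support-and-pairing step: rigorously ruling out that a net twist $\prod_j T_{c_j}^{k_j}$ leaks into a simple piece, and then matching the annulus boundaries to the \emph{listed} curves rather than to arbitrary curves on $\partial M'$. This needs Johannson's ``homotopy implies isotopy'' theorem together with the finiteness of the mapping class group modulo annulus twists, and careful bookkeeping of the Seifert-fibered pieces (exceptional fibers, and bases that may be non-orientable). The reductions in the first two paragraphs, while standard, also require some care to ensure that making surfaces disjoint does not destroy the property that $f|_{\partial M}$ is a product of twists along the prescribed curves.
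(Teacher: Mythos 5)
First, a point of comparison that constrains this review: the paper does not prove Theorem \ref{McC} at all---it is imported verbatim from McCullough \cite{McC} and used as a black box---so your proposal can only be measured against McCullough's published argument and on its own internal merits, not against anything in this paper. Measured that way, there is a genuine gap, and it sits exactly where the theorem is hardest: the compressible case. Your reduction asserts that the twisting curves which do not bound disks ``can be isotoped off the compressible part and into $\partial_- W$.'' This is false, and it silently discards the heart of the theorem. A curve on $\partial_+ W$ that is incompressible in $M$ need not be isotopic into $\partial_- W$; for a handlebody one has $\partial_- W = \emptyset$, yet the statement there is nontrivial. Concretely, let $V = S \times I$ for $S$ a once-punctured torus (a genus-two handlebody), let $a \subset S$ be essential, and set $c_1 = a \times \{0\}$, $c_2 = a \times \{1\}$: the twist along the vertical annulus $a \times I$ extends $T_{c_1} T_{c_2}^{-1}$ over $V$, neither curve bounds a disk, and the incompressible annulus the theorem demands lives entirely inside the compression body---precisely the region your scheme throws away. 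Note also that this paper applies Theorem \ref{McC} with compressible sides (e.g.\ to both sides of an embedding $\Sigma_g \hookrightarrow S^3$, one of which may be a handlebody, in ruling out extendability of the twist along a non-separating curve), so the discarded case is not peripheral even for the application at hand.

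Two further steps are real but lesser problems. In the boundary-irreducible Haken case, Johannson's finiteness gives that some power of the extension is a product of twists along essential annuli and tori; restricting to $\partial M$ (torus twists restrict trivially, and an annulus twist restricts to opposite twists along its two boundary circles) and using that twists along disjoint, pairwise non-isotopic essential curves generate a free abelian subgroup of $\MCG(\partial M)$, you obtain that $c_i$ is isotopic to a boundary circle of \emph{some} essential annulus $A$---but the other boundary circle of $A$ need not be among the listed $c_j$, since its contribution can cancel against other annuli in the product. Passing from ``cobounds an incompressible annulus with some curve'' to ``cobounds one with some $c_j$, $j \neq i$'' requires an additional argument (for instance, modifying the homeomorphism by annulus twists and inducting on the collection), which you flag as the main obstacle but do not supply; flagging it is not resolving it. Finally, the prime-decomposition step needs more care than ``make the spheres disjoint from the $N_j$'': the extending homeomorphism need not preserve your system of essential spheres, so localizing it to the summand carrying the curves requires an equivariance argument rather than a general-position one.
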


In \cite{GWWZ}, Guo-Wang-Wang-Zhang determined all periodic automorphisms 
of the closed surface $\Sigma_2$ of genus two that are extendable over $S^3$.  
They showed that among the twenty-one 
(conjugacy classes of) periodic maps of $\Sigma_2$, 
exactly thirteen maps are extendable over $S^3$. 
They described those maps explicitly, and as a direct consequence, 
we have the following:   
\begin{theorem}[Guo-Wang-Wang-Zhang \cite{GWWZ}]
\label{finite}
Let $f$ be a periodic automorphism of a closed surface of genus two. 
If $f$ is extendable over $S^3$, then $f$ is standardly extendable over $S^3$. 
\end{theorem}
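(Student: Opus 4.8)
The plan is to deduce the statement from the explicit determination of the extendable periodic maps carried out in \cite{GWWZ}. First I would recall the list of the twenty-one conjugacy classes of periodic automorphisms of $\Sigma_2$, organized by order and by the data of the quotient orbifold $\Sigma_2 / \langle f \rangle$. Among these, \cite{GWWZ} single out exactly the thirteen that are extendable over $S^3$, each presented concretely as a rotational symmetry of a symmetrically positioned copy of $\Sigma_2$ in $S^3$. The corollary then reduces to checking, class by class, that the embedding realizing each of these thirteen symmetries is (isotopic to) a Heegaard embedding, i.e.\ that $\Sigma_2$ bounds a handlebody on each side of it; since the models in \cite{GWWZ} are built from standard symmetric pictures, this verification is direct and the statement follows from their tables.

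The conceptual reason I expect this to go through is the following. After replacing $f$ by a finite-order representative (possible for a periodic class by the Nielsen realization for cyclic groups) and arranging the extension $\hat f$ to be periodic as well, the cyclic group $G = \langle \hat f \rangle$ acts on $S^3$ with $\Sigma := \iota(\Sigma_2)$ an invariant surface. By the geometrization theorem, every finite-order homeomorphism of $S^3$ is conjugate to an orthogonal one, so we may assume $G$ is a subgroup of $O(4)$ acting on the round $S^3$. One then straightens $\Sigma$ equivariantly: the invariant genus-two surface can be pushed, $G$-equivariantly, onto a standard Heegaard surface, so that the two complementary regions become $G$-invariant handlebodies. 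Restricting the resulting action to $\Sigma$ exhibits $f$ as standardly extendable.

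The main obstacle is twofold. First, one must know that the extension $\hat f$ of a periodic $f$ can itself be chosen periodic, so that an honest finite group action on $(S^3,\Sigma)$ is available rather than an arbitrary automorphism of the pair; this is precisely what the finite-group framework of \cite{WWZZ13, WWZZ15, GWWZ} supplies, since there the extensions are constructed as genuine symmetries. Second, the equivariant straightening of the invariant surface would require a delicate equivariant Heegaard-type argument in general. The virtue of the explicit approach of \cite{GWWZ} is that it sidesteps both difficulties by exhibiting, for each of the thirteen maps, a standard model directly. Accordingly, the proof I would write simply invokes their explicit description and confirms that each of the thirteen listed models is standard, from which Theorem~\ref{finite} is immediate.
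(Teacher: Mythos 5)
Your proposal matches the paper's treatment exactly: the paper also derives this theorem as a direct consequence of the explicit classification in \cite{GWWZ}, where the thirteen extendable periodic maps (out of twenty-one conjugacy classes) are exhibited as symmetries of standardly embedded genus-two surfaces, so standard extendability is read off from their models. Your auxiliary sketch via geometrization and equivariant straightening is correctly flagged as unnecessary, since the explicit description already does the work.
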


The following is our main theorem: 

\begin{theorem}
\label{theo-main}
Let $f$ be an automorphism of a closed surface of genus two. 
If $f$ is extendable over $S^3$, then $f$ is standardly extendable over $S^3$. 
\end{theorem}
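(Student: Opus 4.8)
The plan is to begin with an arbitrary embedding $\iota\colon \Sigma_2 \hookrightarrow S^3$ over which $f$ extends, extract from the two complementary pieces a canonical structure that the extension must respect, and then deform the embedding---equivariantly with respect to the extension---until both pieces become handlebodies. Write $\Sigma := \iota(\Sigma_2)$ and let $M_1, M_2$ be the closures of the two components of $S^3 \setminus \Sigma$, so $M_1 \cup_\Sigma M_2 = S^3$ and $\partial M_i = \Sigma$. Each $M_i$ is irreducible: a $2$-sphere in $\Int M_i$ bounds a ball in $S^3$, and since $\Sigma$ is connected it lies on one side of that sphere, so the ball on the other side lies in $M_i$. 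Let $\hat f$ be the given extension; being an automorphism of $(S^3,\Sigma)$ it either preserves each $M_i$ or interchanges $M_1$ and $M_2$, and in either case $\hat f|_\Sigma$ realizes $f$. First I would attach to each side its characteristic compression body: by Theorem \ref{thm:Bonahon} there is a unique compression body $W_i \subset M_i$ with $\partial_+ W_i = \Sigma$ and $Q_i := \Cl(M_i \setminus W_i)$ boundary-irreducible, $W_i$ being one of the four types of Figure \ref{fig:ccb}. By the uniqueness in Theorem \ref{thm:Bonahon}, together with the uniqueness of the W-system of each $Q_i$ (Lemma \ref{W-system}), the whole decomposition of $S^3$ into $W_1, Q_1, Q_2, W_2$ and the canonical annuli and tori inside the $Q_i$ are carried to themselves, up to isotopy, by $\hat f$. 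If both $M_i$ are handlebodies then $\Sigma$ is a Heegaard surface and $\iota$ is already standard; so I would assume that some side, say $M_2$, is not a handlebody.

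The heart of the argument is to control, through essential annuli, both the shape of the non-handlebody side and the twisting carried by $\hat f$ along $\Sigma$. By McCullough's Theorem \ref{McC}, the only Dehn twists occurring in $f$ are along curves that either bound disks or cobound incompressible annuli in $M_1$ or $M_2$; in the basic case where the opposite side $M_1$ is a genus-two handlebody $V$ we have $M_2 = E(V)$, and Theorem \ref{thm:classification of essential annuli} classifies every such annulus into Types $1$--$4$. The coexistence lemmas---Lemmas \ref{lem:non-existence of Types 1, 3-1 and 4-2}, \ref{lem:uniqueness of Type 2-1}, \ref{lem:Types 2-1 and 3-2}, \ref{lem:uniqueness of X for Type 3-2} and \ref{Case3-lemm}, read together with Lemma \ref{prop-simple} for the Seifert-fibered and $I$-bundle pieces of the W-decomposition---force the canonical annuli to occur in a short, rigid list of configurations. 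Since $\hat f$ permutes this finite canonical collection, it induces a symmetry of a tightly constrained model; and Lemma \ref{rel-handlebody} shows that, modulo the twists along these canonical annuli, only finitely much mapping-class data on $\Sigma$ remains.

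With the configuration described, I would then produce a standard embedding. The failure of $M_2$ to be a handlebody is concentrated in $Q_2$ and in the knotting and linking exhibited by the Type $3$ and Type $4$ descriptions (the solid tori, the cabling data, the Eudave-Mu\~noz knot), and each such piece can be replaced by a trivial unknotted filling without altering a collar of $\Sigma$ on which $f$ is recorded. Carrying out these replacements simultaneously on the whole $\hat f$-orbit of pieces, and noting that the twists along the canonical annuli extend over the new handlebodies, yields a standard embedding $\iota'$ together with an extension $\hat f'$ satisfying $\hat f'|_\Sigma = f$; the residual periodic part is standardly extendable by Theorem \ref{finite}, into which the whole situation degenerates once the annular twisting has been normalized.

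The step I expect to be the main obstacle is the equivariance of this re-embedding and the exact matching of the induced boundary map. Unknotting a single side in isolation is routine, but doing so while (i) keeping the restriction to $\Sigma$ equal to $f$, (ii) respecting the possibility that $\hat f$ swaps the two sides, and (iii) normalizing every piece in an $\hat f$-orbit at once is delicate. This is precisely where the rigidity furnished by Theorem \ref{thm:classification of essential annuli} and the coexistence lemmas is indispensable: they make the canonical annuli unique up to isotopy and the surrounding pieces Seifert-fibered with controlled base orbifolds, so that $\hat f$ has no room to obstruct a symmetric unknotting, and whatever twisting survives is absorbed by twists along annuli that automatically extend over the trivialized side.
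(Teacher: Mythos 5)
Your setup (characteristic compression bodies on both sides, uniqueness forcing $\hat f$ to respect the decomposition, reduction to the case where one side is a genus-two handlebody $V$ with $E(V)$ boundary-irreducible, and then the W-system of essential annuli together with the Type $1$--$4$ classification) matches the paper's skeleton. But your central step---that each piece responsible for the knotting ``can be replaced by a trivial unknotted filling without altering a collar of $\Sigma$,'' carried out equivariantly, with ``the twists along the canonical annuli extend[ing] over the new handlebodies''---is a genuine gap, and it is not how the paper proceeds. The refilling trick is sound only for \emph{torus} boundary pieces: a torus in $S^3$ bounds a solid torus, the preferred longitude is canonical (so $\hat f$ preserves it), and regluing a solid torus along the preferred longitude returns $S^3$ with the extension intact. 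That is exactly the content of Claims \ref{claim1} and \ref{claim2}, which make $M$ boundary-irreducible and atoroidal. For pieces bounded by \emph{essential annuli} no analogous canonical trivial filling exists: cutting along a Type $2$, $3$ or $4$ annulus and substituting an ``unknotted'' piece neither obviously yields $S^3$ nor gives any reason for $\hat f$ to extend, and your claim that the residual annular twisting is ``absorbed'' by twists extending over the trivialized side is false in general---Theorem \ref{McC} says a twist along a curve on $\Sigma$ extends over a side only if the curve bounds a disk or cobounds an incompressible annulus there, and the whole thrust of the paper's case analysis is to show the twist exponents are forced to be \emph{zero}, not that the twists extend.

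What the paper does instead, and what your proposal is missing, is a contradiction argument whose engine is finiteness of order: assuming $f$ is not standardly extendable, each annulus type in the W-system is ruled out (Claims \ref{claim:Type 2-2}--\ref{claim:Type 3-2}) by showing that some power $\hat f^{\,n}|_V$ lies in an explicitly finite group and that any residual Dehn twists along $\partial A$ are killed by Theorem \ref{McC} or by the solid-torus fillings---using Cho's generators $\alpha,\beta_1,\beta_2,\gamma$ of the relevant genus-two mapping class groups, Johannson's finiteness for simple pairs (Proposition 27.1 of \cite{Jo2}) and for relative handlebodies (Lemma \ref{rel-handlebody}), and Cho--McCullough \cite{Cho-McC} for the non-primitive disk case---whence $f$ has finite order and Lemma \ref{lem:finite order} (Nielsen realization plus the Guo--Wang--Wang--Zhang periodic result, Theorem \ref{finite}) yields the contradiction. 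Your sketch gestures at Theorem \ref{finite} only as a final ``degeneration,'' but without the finite-order computations in each annulus case there is no route to invoking it, and without them your equivariant unknotting has no mechanism at all; the coexistence lemmas you cite constrain which annuli coexist, but they do not by themselves trivialize the twisting or produce the re-embedding you need.
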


We remark that in \cite{WWZZ15}, Wang-Wang-Zhang-Zimmermann 
found some finite, non-cyclic subgroups of $\Homeo (\Sigma_g)$ that 
extend to subgroups of $\Homeo (S^3, \Sigma_g )$ 
with respect to some $\Sigma_g \hookrightarrow S^3$, 
but do not extend to those of $\Homeo (S^3, \Sigma_g )$ 
with respect to any standard embedding $\Sigma_g \hookrightarrow S^3$.

The extendability (standardly extendability, respectively) over $S^3$ 
is preserved under conjugation. 
Indeed, suppose that an automorphism $f$ of $\Sigma_g$ 
extends to an automorphism $\hat{f}$ of the pair $(S^3, \iota(\Sigma_g))$ 
with respect to an embedding (a standard embedding, respectively) 
$\iota : \Sigma_g \hookrightarrow S^3$.  
Then for any automorphism $h$ of $\Sigma_g$, 
$f^\prime := h^{-1} \circ f \circ h$ extends to 
$\hat{f}$ 
with respect to the embedding (a standard embedding, respectively) $\iota \circ h$. 

The extendability and standardly extendability, respectively over $S^3$ are preserved 
under isotopy as well. 
In the remaining of the paper, we will not distinguish 
homeomorphisms 
from their isotopy classes in their notation unless otherwise mentioned.
Here we recall the cyclic Nielsen realization theorem: 
\begin{theorem}[Nielsen \cite{Nielsen}]\label{Nielsen}
Every element $f \in \MCG ( \Sigma_g )$ $(g \geq 2)$ of finite order $k$ 
has a representative $\phi \in \Homeo ( \Sigma_g )$ of order $k$.
\end{theorem}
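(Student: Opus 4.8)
The plan is to realize $f$ geometrically, by producing a hyperbolic metric on $\Sigma_g$ that is preserved by a representative of $f$. For $g \geq 2$ the mapping class group $\MCG(\Sigma_g)$ acts on the Teichm\"uller space $\mathcal{T}(\Sigma_g)$ of marked hyperbolic (equivalently, conformal) structures, and this action is by isometries with respect to both the Teichm\"uller and the Weil--Petersson metrics. Writing $G = \langle f \rangle \cong \Integer / k$, I would reduce the whole theorem to the single assertion that $G$ has a fixed point $X \in \mathcal{T}(\Sigma_g)$. Granting such an $X$, the passage to an order-$k$ homeomorphism is then formal, so the substance of the proof is concentrated in the existence of the fixed point.

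Assume $X = [(\Sigma_g, \sigma)]$ is fixed by $f$, where $\sigma$ is a marked hyperbolic structure, and choose any homeomorphism $\phi_0$ representing $f$. The equality $f \cdot X = X$ in $\mathcal{T}(\Sigma_g)$ says precisely that the marked surfaces $(\Sigma_g, \sigma)$ and $(\Sigma_g, \phi_0^{*}\sigma)$ are equivalent, which produces an isometry $\phi : (\Sigma_g, \sigma) \to (\Sigma_g, \sigma)$ isotopic to $\phi_0$; thus $\phi \in \Homeo(\Sigma_g)$ represents $f$. It remains to see that $\phi$ has order exactly $k$. The isometry group $\mathrm{Isom}(\Sigma_g, \sigma)$ is finite for $g \geq 2$, and the natural map $\mathrm{Isom}(\Sigma_g, \sigma) \to \MCG(\Sigma_g)$ is injective: an isometry isotopic to the identity induces an inner automorphism of $\Gamma := \pi_1(\Sigma_g) \subset \mathrm{PSL}(2,\mathbb{R})$, hence centralizes the non-elementary Fuchsian group $\Gamma$ and so is the identity. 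Since order is preserved under an injective homomorphism, the order of $\phi$ equals the order $k$ of its image $f$, and $\phi$ is the desired representative.

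The hard part will be the existence of the $G$-fixed point $X$, where the genuine content lies. One route is to work with the Weil--Petersson metric, which is negatively curved but incomplete; its metric completion $\overline{\mathcal{T}}(\Sigma_g)$ (the augmented Teichm\"uller space) is a complete $\mathrm{CAT}(0)$ space on which $G$ still acts by isometries, so Cartan's fixed-point theorem, applied to the circumcenter (or barycenter) of a $G$-orbit, furnishes a fixed point $\bar{X} \in \overline{\mathcal{T}}(\Sigma_g)$. The delicate point I expect to dominate the work is to show $\bar{X}$ can be taken in the interior $\mathcal{T}(\Sigma_g)$ rather than in a boundary stratum: a boundary fixed point corresponds to an $f$-invariant multicurve being pinched, and one must either rule this out or promote such a configuration to an honest interior fixed point. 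Alternatively, following Nielsen's original approach, I could analyse the action of a lift of $\phi$ on the circle at infinity $\partial \mathbb{H} = S^1$ and show that the cyclic group it generates is topologically conjugate to a finite group of rotations; Kerckhoff's convexity of hyperbolic length functions along earthquake paths gives yet another self-contained construction of the fixed point. In every case the heart of the matter is producing the invariant structure, after which the passage to an order-$k$ homeomorphism is the routine step carried out above.
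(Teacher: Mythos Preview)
The paper does not prove this statement at all: Theorem~\ref{Nielsen} is quoted as a classical result of Nielsen~\cite{Nielsen} and used as a black box (to pass from Theorem~\ref{finite} to Lemma~\ref{lem:finite order}). So there is no ``paper's own proof'' to compare against.

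That said, your outline is a reasonable sketch of the modern approaches to the Nielsen realization theorem. The reduction to a fixed point in $\mathcal{T}(\Sigma_g)$ and the subsequent extraction of an order-$k$ isometry via injectivity of $\mathrm{Isom}(\Sigma_g,\sigma)\to\MCG(\Sigma_g)$ are correct and standard. Of the three routes you mention for the fixed point, Kerckhoff's earthquake-convexity argument is the cleanest and is self-contained; the Weil--Petersson/$\mathrm{CAT}(0)$ route genuinely requires the extra work you flag (ruling out fixed points in the boundary strata of augmented Teichm\"uller space), and Nielsen's original circle-at-infinity argument is historically first but technically the most delicate. If you were asked to supply a proof here, committing to Kerckhoff's method and filling in the convexity statement would be the most efficient choice; as written, your proposal is an honest survey of strategies rather than a proof.
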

By this theorem, we have the mapping class version of Theorem \ref{finite} as follows: 
\begin{lemma}
\label{lem:finite order}
Let $f \in \MCG ( \Sigma_2 )$ be an element of finite order.  
If $f$ is extendable over $S^3$, then $f$ is standardly extendable over $S^3$. 
\end{lemma}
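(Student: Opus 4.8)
The plan is to deduce this mapping-class statement from the periodic-homeomorphism statement of Theorem \ref{finite} by means of the cyclic Nielsen realization theorem. The key point is that both extendability and standard extendability over $S^3$ have just been shown to be invariant under isotopy, so that each is genuinely a property of the mapping class $f$ and not merely of a chosen representative homeomorphism. This is exactly what lets us pass freely between the homeomorphism-level result of \cite{GWWZ} and the present $\MCG$-level formulation.

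First I would apply Theorem \ref{Nielsen}. Since $f \in \MCG(\Sigma_2)$ has finite order, say $k$, it admits a representative $\phi \in \Homeo(\Sigma_2)$ with $\phi^k = \id$; that is, $\phi$ is a periodic automorphism of $\Sigma_2$ in the sense required by Theorem \ref{finite}. Because $\phi$ is isotopic to $f$ and extendability over $S^3$ is isotopy-invariant, the hypothesis that $f$ is extendable over $S^3$ yields that the periodic automorphism $\phi$ is itself extendable over $S^3$. I would then invoke Theorem \ref{finite} of Guo-Wang-Wang-Zhang, applied to $\phi$, to conclude that $\phi$ is standardly extendable over $S^3$. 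Finally, since standard extendability is also isotopy-invariant and $\phi$ is isotopic to $f$, it follows that $f$ is standardly extendable over $S^3$, which is the desired conclusion.

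There is essentially no substantive obstacle in this argument: its entire content is the bridge between the finite-order \emph{homeomorphism} result of \cite{GWWZ} and the finite-order \emph{mapping class} assertion here, and that bridge is furnished by Theorem \ref{Nielsen} together with the isotopy-invariance of the two extendability notions recorded immediately above. The single point deserving attention is to confirm that the notion of a periodic automorphism appearing in Theorem \ref{finite} coincides with a finite-order representative produced by Theorem \ref{Nielsen}; since Nielsen's theorem delivers a homeomorphism realizing the order of $f$ exactly, this match is immediate, and the proof is complete.
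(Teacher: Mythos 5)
Your proposal is correct and coincides with the paper's own argument: the authors likewise obtain Lemma \ref{lem:finite order} by taking a finite-order representative via Nielsen's realization theorem (Theorem \ref{Nielsen}), applying Theorem \ref{finite} to that periodic homeomorphism, and passing back to the mapping class using the isotopy-invariance of extendability and standard extendability recorded just before the lemma. Nothing further is needed.
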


\begin{remark}
The same consequence of Theorem \ref{theo-main} 
holds for a closed surface $\Sigma_g$ with $g \leq 1$ as well. 
The case of $\Sigma_0$ is trivial. 
In fact, every automorphism of $\Sigma_0$ is standardly extendable 
over $S^3$ by the Sch\"onflies Theorem. 
Suppose that an automorphism $f$ of $\Sigma_1$ 
extends to an automorphism $\hat{f}$ of the pair $(S^3, \iota(\Sigma_1))$ 
with respect to an embedding 
$\iota : \Sigma_1 \hookrightarrow S^3$.  
Note that at least one of the two components of $S^3$ cut off by 
$\iota (\Sigma_0)$ is a solid torus $V$. 
Then $E(V)$ is the exterior of a knot. 
If $E(V)$ is also a solid torus, there is nothing to prove. 
Suppose that $E(V)$ is not a solid torus. 
We note that in this case we have $\hat{f}(V) = V$. 
Let $\mu$ and $\lambda$ be the meridian and preferred longitude of $V \subset S^3$. 
By the uniqueness of $\mu$ and $\lambda$, the automorphism $\hat{f}$ preserves $\mu$ and $\lambda$. 
Thus, it is straightforward that $f$ is standardly extendable 
over $S^3$. 
\end{remark}

\section{Proof of Theorem \ref{theo-main}.}
\label{sec:Proof of Theorem}

Let $f$ be an automorphism of a closed surface $\Sigma$ of genus two. 
Suppose that $f$ extends to an automorphism $\hat{f}$ of the pair 
$(S^3, \iota (\Sigma))$ 
with respect to an embedding $\iota : \Sigma \hookrightarrow S^3$. 
The surface $\iota (\Sigma)$ separates $S^3$ into two compact 3-manifolds $M_1$ and $M_2$, 
i.e. $S^3 = M_1 \cup_{\iota(\Sigma)} M_2$. 
Suppose for a contradiction that $f$ is not standardly extendable over $S^3$.

\begin{claim}
\label{claim1}
After changing $\iota$ $($and so $\hat{f})$ if necessary, 
we can assume that $M_1$ is a handlebody and 
$M_2$ is boundary-irreducible. 
\end{claim}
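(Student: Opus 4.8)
The plan is to reduce the given embedding to a normal form by analysing the two sides with Bonahon's characteristic compression body and then trivialising the boundary-irreducible cores by an equivariant re-embedding. I would organise everything around a minimal-complexity choice: among all embeddings $\iota$ extending $f$ (together with an extension $\hat f$), fix one minimising, say, the total genus of the interior boundaries of the characteristic compression bodies of the two sides. For any such embedding two structural facts hold. First, each $M_i$ is irreducible: since $S^3$ is irreducible, a $2$-sphere in $M_i$ bounds two balls in $S^3$, and the ball not containing the complementary piece lies in $M_i$, so the sphere is inessential. Second, $\iota(\Sigma)$ must compress into at least one side, since otherwise it would be incompressible on both sides in the irreducible manifold $S^3$, hence $\pi_1$-injective into $\pi_1(S^3)=1$, forcing genus $0$. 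As we may name the sides when choosing $\iota$, assume $\iota(\Sigma)$ compresses into $M_1$.

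Applying Theorem \ref{thm:Bonahon} to $M_1$ yields its characteristic compression body $W$, with $\partial_+ W = \iota(\Sigma)$, incompressible interior boundary $F := \partial_- W$, and boundary-irreducible core $N := \Cl(M_1 \setminus W)$. By the uniqueness in Theorem \ref{thm:Bonahon} these are preserved by $\hat f$ when $\hat f$ fixes the two sides; when $\hat f$ interchanges them, $M_1 \cong M_2$, both sides compress, and $\hat f$ carries the characteristic compression body of $M_1$ onto that of $M_2$, so the configuration is again $\hat f$-equivariant. Now suppose $F \neq \emptyset$. Writing $Q := \Cl(S^3 \setminus N) = W \cup_{\iota(\Sigma)} M_2$, the surface $F = \partial N$ is incompressible in $N$ but, as a closed positive-genus surface in $S^3$, compresses into $Q$. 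I would then replace the (possibly knotted or linked) core $N$ by an $\hat f$-equivariant trivial filling $H$ of $F$ --- solid tori for torus components and a genus-two handlebody for a genus-two component --- chosen so that $W \cup_F H$ is a genus-two handlebody; this is possible because capping the interior boundary of a compression body by handlebodies recovers a handlebody. The filling strictly lowers the complexity while still extending $f$, contradicting minimality unless $F = \emptyset$ already. Hence $M_1$ is a handlebody.

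It remains to see that $M_2$ is boundary-irreducible. If $\iota(\Sigma)$ compressed into $M_2$ as well, then amalgamating the characteristic compression body of $M_2$ with the handlebody $M_1$ and trivialising the resulting core would again produce a strictly simpler embedding extending $f$, against minimality. The degenerate possibility that this process leaves both sides handlebodies is excluded precisely by the standing hypothesis that $f$ is \emph{not} standardly extendable: by Waldhausen's uniqueness of the genus-two Heegaard splitting of $S^3$ \cite{W}, two handlebody sides would make $\iota$ standard. Thus $\iota(\Sigma)$ is incompressible into $M_2$, i.e.\ $M_2$ is boundary-irreducible. Note that once $M_1$ is a handlebody and $M_2$ is boundary-irreducible the two sides are non-homeomorphic, so the extension automatically preserves the sides.

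The step I expect to be the main obstacle is the equivariant filling used to trivialise the cores. One must choose the fillings $H$ and their gluings so that three requirements hold simultaneously: the re-embedded closed manifold is again $S^3$ (this is where the compressibility of $F$ into $Q$ is essential), the restriction of $\hat f$ to $F$ extends over the chosen fillings so that the extension of $f$ survives, and the complexity genuinely drops. Reconciling the $S^3$-condition with the $\hat f$-equivariance of the filling on each invariant (or swapped) component of $F$ is the delicate point, and it is to guarantee that these can be met at once --- without sacrificing the extension of $f$ --- that I phrase the whole argument in terms of a minimal-complexity embedding.
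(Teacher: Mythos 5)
Your overall strategy matches the paper's: apply Bonahon's Theorem \ref{thm:Bonahon} to each side, use its uniqueness statement for $\hat{f}$-equivariance, and re-embed by discarding the boundary-irreducible cores and capping off; your minimal-complexity bookkeeping is a reasonable substitute for the paper's case analysis over the four compression-body types. But there is a genuine gap exactly where you predict one: you never construct the ``$\hat{f}$-equivariant trivial filling,'' and that construction is the entire content of the claim --- minimality can only exploit a simpler embedding once one is shown to exist. The paper's mechanism is concrete. If $T$ is a torus component of $F = \partial_- W_1$, then $T$ bounds a solid torus on at least one side of $S^3$; since the core $N = \Cl(M_1 \setminus W_1)$ is boundary-irreducible, it is not a solid torus, so the \emph{complementary} side $U = W_1 \cup M_2$ is the solid torus. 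One then glues a fresh solid torus $X$ to $U$ along $T$ sending the meridian of $X$ to the preferred longitude $\lambda$ of $U$. This meets your three requirements simultaneously: the result is $S^3$ because $\lambda$ meets the meridian disk of $U$ once (so $U \cup_T X$ is the genus-one Heegaard splitting); $W_1 \cup X$ is a genus-two handlebody regardless of slope, so the new embedding is simpler; and equivariance is automatic, with nothing to reconcile, because $\lambda$ is the \emph{unique} slope on $T$ that is nullhomologous in $N$, hence is preserved by $\hat{f}$, so $\hat{f}|_{W_1 \cup W_2}$ extends over the filling tori. When $F$ consists of two tori, or when both sides carry tori --- in particular in your side-swapping case --- the same canonical filling is performed on all tori at once, as in Figure \ref{fig:re-embedding}, with $\hat{f}$ preserving the union of the preferred longitudes. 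Without this preferred-longitude input, your assertion that a filling exists satisfying all three conditions is precisely the statement to be proved, and an arbitrary filling slope fails both the $S^3$-condition and equivariance.

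Two smaller points. Your mention of capping a genus-two component of $F$ by a genus-two handlebody is misplaced: if $\partial_- W$ has a genus-two component then $W$ is a product and that side is already boundary-irreducible --- this is the desired conclusion for $M_2$, not a configuration to fill (and under your standing assumption that $\iota(\Sigma)$ compresses into $M_1$, the component $F$ is empty, one torus, or two tori). Second, once the preferred-longitude filling is in hand, the minimality scaffolding becomes unnecessary: as in the paper, each configuration with tori in $\partial_- W_1 \cup \partial_- W_2$ directly yields either the desired normal form or a standard embedding extending $f$, contradicting the standing assumption that $f$ is not standardly extendable. The remainder of your argument --- irreducibility of the $M_i$, compressibility of $\iota(\Sigma)$ into at least one side, and the use of \cite{W} to rule out two handlebody sides --- is correct and agrees with the paper.
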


\begin{proof}[Proof of Claim $\ref{claim1}$]
By Theorem \ref{thm:Bonahon} there exist 
the characteristic compression bodies $W_1$ and $W_2$ of 
$M_1$ and $M_2$, respectively. 
Then each of $W_1$ and $W_2$ is homeomorphic to 
one of the four compression bodies shown in Figure \ref{fig:ccb}. 

Since $\iota ( \Sigma )$ is compressible in $S^3$, both $\partial_- W_1$ and $\partial_- W_2$ cannot be a closed surface of genus two. 
Both $\partial_- W_1$ and $\partial_- W_2$ cannot be empty as well, otherwise $f$ is standardly extendable over $S^3$, 
which contradict our assumption. 
If $\partial_- W_1 = \emptyset$ ($\partial_- W_2 = \emptyset$, respectively) and 
$\partial_- W_2$ ($\partial_- W_1$, respectively) is a closed surface of genus two, 
there is nothing to prove. 
Thus we only need to consider the case where at least one of 
$\partial_- W_1$ and $\partial_- W_2$ is a torus or the disjoint union of two tori. 

Suppose for example that each of $\partial_- W_1$ and $\partial_- W_2$ is a torus. 
Set 
$T_i := \partial_- W_i = \partial W_i \setminus \iota (\Sigma)$ ($i=1,2$).
Then we can write  
$S^3 =  ( W_1 \cup M_2 ) \cup_{T_1} \Cl (S^3 \setminus (W_1 \cup M_2))$. 
By the uniqueness (Theorem \ref{thm:Bonahon}) 
of the characteristic compression bodies $W_1$ and $W_2$ of 
$M_1$ and $M_2$, respectively, 
we may assume that $\hat{f}$ preserves $W_1 \cup W_2$ setwise. 
Further, by the definition of a characteristic compression body, 
$E (W_1 \cup M_2) = \Cl (M_1 \setminus W_1)$ is 
boundary-irreducible. 
In particular, $E (W_1 \cup M_2)$ is not a solid torus. 
Since any torus in $S^3$ bounds a solid torus, $W_1 \cup M_2$ is a solid torus. 
By the same reason, $W_2 \cup M_1$ is a solid torus. 
Let $\lambda_1$ and $\lambda_2$ be the preferred longitudes of $W_1 \cup M_2$ 
and $W_2 \cup M_1$, respectively. 
We attach two solid tori $X_1$ and $X_2$ to $W_1 \cup W_2$ by 
homeomorphisms $\phi_i : \partial X_i \to T_i$ ($i=1,2$) that 
send the meridian of $\partial X_i$ to $\lambda_i$. 
Then the resulting manifold $X_1 \cup_{\phi_1} (W_1 \cup W_2) \cup_{\phi_2} X_2$ is again 
the 3-sphere $S^3$ and $\iota (\Sigma)$ is now a Heegaard surface. 
(This operation corresponds to a re-embedding of $\Sigma$ into $S^3$).  
See Figure \ref{fig:re-embedding}. 
\begin{center}
\begin{overpic}[width=12cm,clip]{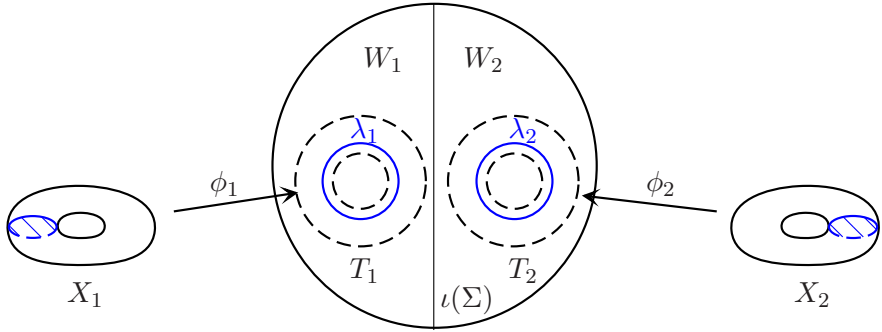}
  \linethickness{3pt}
  \put(28,16){$X_1$}
  \put(303,16){$X_2$}
  \put(82,58){$\phi_1$}
  \put(247,58){$\phi_2$}
  \put(135,78){\color{blue} $\lambda_1$}
  \put(195,78){\color{blue} $\lambda_2$}
  \put(140,105){$W_1$}
  \put(178,105){$W_2$}
  \put(135,25){$T_1$}
  \put(195,25){$T_2$}
  \put(169,14){$\iota(\Sigma)$}

\end{overpic}
\captionof{figure}{The manifold $X_1 \cup_{\phi_1} (W_1 \cup W_2) \cup_{\phi_2} X_2 \cong S^3$.}
\label{fig:re-embedding}
\end{center}
By the uniqueness of the preferred longitude, 
$\hat{f}|_{W_1 \cup W_2}$ preserves $\lambda_1 \sqcup \lambda_2$ setwise. 
Therefore $\hat{f}|_{W_1 \cup W_2}$ extends to an automorphism of 
$X_1 \cup_{\phi_1} (W_1 \cup W_2) \cup_{\phi_2} X_2$. 
This implies that $f$ is standardly extendable over $S^3$, which is a contradiction. 

In a similar way, we can show that 
if $\partial_- W_1$ (say) is empty and $\partial_- W_2$ is a torus; 
$\partial_- W_1$ (say) is empty and $\partial_- W_2$ consists of two tori; 
$\partial_- W_1$ (say) is a torus and $\partial_- W_2$ consists of two tori; or 
each of $\partial_- W_1$ and $\partial_- W_2$ consists of two tori, 
we can show that $f$ is standardly extendable over $S^3$, a contradiction. 

When $\partial_- W_1$ (say) is a torus and $\partial_- W_2$ is a closed surface of gens two; 
or $\partial_- W_1$ (say) consists of two tori and $\partial_- W_2$ is a closed surface of gens two, a similar argument as above concludes the assertion. 
\end{proof}

By virtue of Claim \ref{claim1}, in the following we assume that 
$S^3 = V \cup_{\iota(\Sigma)} M$, 
where $V$ is a handlebody of genus two, and $M$ is boundary-irreducible. 

\begin{claim}
\label{claim2}
After changing $\iota$ $($and so $\hat{f})$ if necessary, 
we can assume that $M$ is atoroidal as well. 
\end{claim}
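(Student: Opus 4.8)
The plan is to mirror the strategy of Claim \ref{claim1}, replacing the characteristic compression bodies by the canonical tori of the W-decomposition. First I would record that $\hat{f}$ cannot interchange the two sides $V$ and $M$ of $\iota(\Sigma)$: since $V$ is a handlebody, $\iota(\Sigma)$ compresses in $V$, whereas $M$ is boundary-irreducible, so necessarily $\hat{f}(V) = V$ and $\hat{f}(M) = M$. If $M$ is already atoroidal there is nothing to do, so suppose $M$ contains an essential torus. The key structural observation is that every essential torus $T$ in $M$ bounds a solid torus in $S^3$ on the side containing $V$. Indeed, $T$ is incompressible in $M$, so the complementary side $N$ of $T$ not containing $V$ (which lies entirely in $M$) admits no compressing disk for $T$ and hence is not a solid torus; since every torus in $S^3$ bounds a solid torus on at least one side, the side $N_T$ containing $V$ is a solid torus, and $N$ is the exterior $E(\kappa)$ of the nontrivial knot $\kappa$ that forms the core of $N_T$.

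Next I would pass to the W-decomposition of the boundary-irreducible Haken manifold $M$. By Lemma \ref{W-system} its W-system is unique up to isotopy, so after an isotopy fixing $\iota(\Sigma)$ we may assume $\hat{f}$ preserves the union of canonical tori setwise and merely permutes them. Since the canonical tori are pairwise disjoint and each bounds a solid torus containing $V$, any two of the associated solid tori are nested; hence these solid tori are totally ordered by inclusion and there is a unique outermost one $N_{T_0} = \Nbd(\kappa)$. As $\hat{f}(V) = V$, the map $\hat{f}$ respects this ordering, so it must fix $T_0$, and therefore preserves both $N_{T_0}$ and the knot exterior $E(\kappa) = \Cl(S^3 \setminus N_{T_0})$.

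I would then re-embed exactly in the spirit of Claim \ref{claim1}. Let $\mu$ and $\lambda$ be the meridian and preferred longitude of $\kappa$ on $T_0$. Cut $S^3$ along $T_0$, discard the knotted part $E(\kappa)$, and glue back a solid torus $U$ whose meridian is identified with $\lambda$; the result is again $S^3$, now with $N_{T_0}$ unknotted, and this defines a new embedding $\iota'$ of $\Sigma$ whose exterior $M'$ is obtained from $\Cl(N_{T_0} \setminus V)$ by gluing $U$ along $T_0$. Because the meridian and preferred longitude of a knot in $S^3$ are canonical, the restriction of $\hat{f}$ to $E(\kappa)$ carries $\{\mu, \lambda\}$ to itself; consequently $\hat{f}$ is compatible with the Dehn filling producing $U$ and extends to an automorphism of $(S^3, \iota'(\Sigma))$. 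After this move $T_0$ bounds the solid torus $U$ and is no longer essential, so the number of canonical tori strictly drops; iterating the construction finitely many times yields an embedding for which $M$ is atoroidal.

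The main obstacle I anticipate is bookkeeping rather than a single deep step: I must verify that the regluing genuinely reproduces $S^3$ (so that standard extendability remains the right target) and that $\hat{f}$ extends across it, and, above all, that the re-embedding neither destroys the boundary-irreducibility secured in Claim \ref{claim1} nor creates new essential tori, so that the inductive complexity — the number of canonical tori in the W-system — really decreases and the process terminates. Confirming that the inner canonical tori stay essential and that $\iota'(\Sigma)$ remains incompressible in $M'$ after unknotting the outermost solid torus is the most delicate point.
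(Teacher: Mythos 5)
Your re-embedding mechanism is exactly the paper's: apply the operation of Claim \ref{claim1} to a torus bounding a solid torus on the $V$-side, fill with a solid torus $U$ whose meridian is glued to the preferred longitude, and use the canonicity of $\mu$ and $\lambda$ to extend $\hat{f}$ across the filling. Your structural observations (every essential torus in $M$ bounds a solid torus on the side containing $V$, and these solid tori are totally ordered by inclusion) are correct and in fact more detailed than the paper's sketch. The genuine gap is your choice of torus family. The paper does not use the tori of the Neumann--Swarup W-system here; it uses the JSJ family of Jaco--Shalen and Johannson, i.e., the minimal family of essential tori cutting $M$ into Seifert fibered and simple pieces. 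A W-system torus must be \emph{canonical}: disjoinable from \emph{every} other essential annulus and torus. An essential torus of $M$ need not be isotopic to any canonical torus --- for instance, a vertical essential torus inside a Seifert fibered piece of the W-decomposition (Lemma \ref{prop-simple}) that essentially intersects another essential torus or annulus is excluded from $\mathcal{W}$. Consequently $M$ can be toroidal while $\mathcal{W}$ contains no tori at all, in which case your iteration has nothing to act on and terminates without achieving atoroidality. Relatedly, your termination criterion --- "the number of canonical tori strictly drops" --- is not justified: canonicity is relative to the full family of essential annuli and tori, which changes under the Dehn filling, so previously non-canonical tori surviving inside $N_{T_0}$ can become canonical in the new exterior. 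You flag this as the delicate point but do not resolve it.

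The JSJ family repairs both defects simultaneously, which is why the paper uses it. First, it is nonempty whenever $M$ is toroidal: an empty JSJ family forces $M$ to be Seifert fibered or simple, and $M$ cannot be Seifert fibered since $\partial M$ is a genus-two surface. Second, since all JSJ tori here bound nested solid tori containing $V$, a single application of the filling to the torus $T$ whose solid-torus side is innermost (the "outermost" torus of the paper, as seen from $\partial M$) discards all the other JSJ tori at once, and the new exterior is $M' = (M \cap N_T) \cup_T U$, where $M \cap N_T$ is the JSJ piece adjacent to $\partial M$. That piece is simple --- again it cannot be Seifert fibered, as its boundary contains the genus-two surface --- hence atoroidal; and an incompressible torus in $M'$ can be isotoped off the filling solid torus $U$ (incompressible annuli in a solid torus are boundary-parallel), after which it is parallel either to $\partial M$ or to $T$, and in the latter case it bounds the solid torus $U$ plus a collar in $M'$ and is therefore compressible. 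So atoroidality holds after finitely many such moves without any induction on a count of canonical tori. To salvage your version one would have to prove separately that in this setting every essential torus of $E(V)$ is canonical (equivalently, that essential tori are pairwise disjoinable and disjoinable from essential annuli), which is precisely the statement your argument is missing.
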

\begin{proof}[Proof of Claim $\ref{claim2}$]
According to the torus decomposition theorem 
by Jaco-Shalen \cite{JS} and Johannson \cite{Jo2}, 
there exists a unique minimal family $ \mathcal{T} = \{ T_1, T_2, \ldots, T_m \}$ of mutually disjoint, 
mutually non-parallel, essential tori such that 
each component of the manifolds obtained by cutting $M$ along $\cup_{i=1}^m T_i$ is 
either a Seifert fibered space or a simple manifold. 
By the uniqueness of $\mathcal{T}$, we may assume that 
$\hat{f}$ preserves $\cup_{i=1}^m T_i$ setwise. 
By the same argument as in Claim \ref{claim1}, 
we can eliminate $ \mathcal{T} = \{ T_1, T_2, \ldots, T_m \}$ 
by re-embedding $\Sigma$ into $S^3$. 
Note that if some tori are nested, we can eliminate several tori at once by applying 
the operation in Claim \ref{claim1} to the outermost one. 
\end{proof}

Due to Claim \ref{claim2}, in the remaining of the proof we assume that 
$S^3 = V \cup_{\iota(\Sigma)} M$, 
where $V$ is a handlebody of genus two, and $M$ is boundary-irreducible and atoroidal. 

Suppose that $M$ does not contain essential annuli. 
Then by Johannson \cite{Joh79}, $\MCG(M)$ is a finite group. 
Note that this fact can also be checked as follows. 
By Thurston's hyperbolization theorem \cite{Kap01} and 
equivariant torus theorems \cite{Hol91}, $M$ admits 
a hyperbolic structure with totally geodesic boundary.  
Thus, due to the well-known fact that 
the isometry group of a compact hyperbolic manifold is finite, 
see Kojima \cite{Koj88}, $\MCG(M)$ is finite. 
Consequently, the order of $f \in \MCG(\Sigma)$ is finite. 
Now by Lemma \ref{lem:finite order}, $f$ is standardly extandable over $S^3$, 
a contradiction. 

Suppose that $M$ contains an essential annulus. 
By Lemma \ref{W-system} 
there exists a unique W-system $\mathcal{W}$ of $M$. 
This set $\mathcal{W}$ is not empty. 
To show this, suppose for a contradiction that 
$\mathcal{W} = \emptyset$. 
Then the result of performing the W-decomposition of $M$ is $M$ itself. 
Since $M$ contains an essential annulus, clearly $M$ is not simple. 
Hence by Lemma \ref{prop-simple}, $M$ is either a Seifert fibered space or an $I$-bundle.
Since $\partial M$ is a closed surface of genus two, $M$ is not a Seifert fibered space. 
Thus $M$ is an $I$-bundle over a compact surface $S$. 
If $\partial S \neq \emptyset$, then $M$ is a handlebody, which is a contradiction. 
If $S$ is an orientable closed surface, then $\partial M$ is not connected, 
a contradiction as well. 
If $S$ is a non-orientable closed surface, then 
$H_1(M)$ contains a non-trivial torsion. 
This contradicts $H_1(M) \cong \Integer \oplus \Integer$. 

\vspace{1em}

Since $M$ is atoroidal, $\mathcal{W}$ consists of essential annuli. 
We will use the classification of the essential annuli in $W$ introduced in 
Section \ref{sec:Essential annuli in the exterior of a handlebody of genus two in S3}. 
Since $M$ is boundary-irreducible and atoroidal, $M$ admits 
none of Types 1, 3-1 and 4-2 essential annuli. 
In the following, we will prove the non-existence of essential 
annuli of the other Types. 

\begin{claim}
\label{claim:Type 2-2}
The W-system $\mathcal{W} $ does not contain Type $2$-$2$ annuli.  
\end{claim}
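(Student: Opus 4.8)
The plan is to argue by contradiction, assuming that $\mathcal{W}$ contains an essential annulus $A$ of Type $2$-$2$ and deriving a contradiction with the standing hypotheses that $M$ is boundary-irreducible and atoroidal. First I would record the combinatorial data attached to $A$: writing $V=\Nbd(\Gamma)$ for a handcuff graph $\Gamma=\ell_1\cup\ell_2\cup e_0$ with cut-edge $e_0$, the loop $\ell_1$ is unknotted and bounds a disk $D$ with $\Int(D)\cap\Gamma$ a single transverse point of $e_0$, and $A=D\cap E(V)$. Writing $\partial A=a_1\sqcup a_2$, the circle $a_1$ is a meridian of $e_0$, so it bounds the separating disk $E\subset V$ cutting $V$ into solid tori $X_1\supset\ell_1$ and $X_2\supset\ell_2$, while $a_2$ is parallel to $\ell_1$ and is therefore primitive. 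By Corollary~\ref{primitive} (equivalently Lemma~\ref{2.2}) the unique non-separating disk of $V$ disjoint from $a_2$ is the meridian disk of $X_2$, and it is disjoint from $E$; this uniqueness lets me recognize $X_1$ and $X_2$ intrinsically and keep the subsequent cut-and-paste arguments under control.

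The next step is to convert the hypotheses on $M$ into restrictions on $\Gamma$. Since $D$ misses $\ell_2$, the loops $\ell_1,\ell_2$ are unlinked and $\ell_1$ meets $e_0$ geometrically once. I would then use atoroidality to show that $\ell_2$ is unknotted: a knotted $\ell_2$ would produce a companion torus in $M$ (the frontier of a regular neighbourhood of the knotted handle, pushed into $\Int(E(V))$), which is incompressible because $M$ is contained in the exterior of that handle and is not boundary-parallel, hence essential. In the same spirit I would use the essentiality of $A$ to promote $D$ from a potentially inessential (meridional) disk to one that genuinely detects the embedding: were $D$ isotopic into a collar of $e_0$, the annulus $A$ would be boundary-parallel, so essentiality forces $D$ to be an honest spanning disk for $\ell_1$ across $e_0$. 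Feeding this into the classification of Theorem~\ref{thm:classification of essential annuli}—which, by Lemma~\ref{lem:non-existence of Types 1, 3-1 and 4-2}, already excludes Types $1$, $3$-$1$ and $4$-$2$ in $M$—I would pin down the embedding of $\Gamma$ up to isotopy.

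The heart of the argument, and the step I expect to be the main obstacle, is to show that these restrictions force $\Gamma$ to be the standard handcuff, so that $V$ is standardly embedded and $M=E(V)$ is a genus-two handlebody; since a handlebody is boundary-reducible, this contradicts the boundary-irreducibility of $M$, and hence no Type $2$-$2$ annulus can lie in $\mathcal{W}$. Making this precise requires controlling the cut-edge $e_0$: the delicate point is that $a_1$ bounds a disk on the $V$-side but, by boundary-irreducibility, none on the $M$-side, so every compression must be charged against $e_0$, and one must rule out residual knotting of $e_0$ surviving while $\ell_1,\ell_2$ remain unknotted and $A$ stays essential. I would dispose of this by the outermost-disk and boundary-counting technique already used in Lemmas~\ref{case2}, \ref{lem:uniqueness of Type 2-1} and~\ref{lem:Types 2-1 and 3-2}: pushing $A$ against $E\cup D$ and analysing the outermost sub-disks cut off on the $X_2$ side should yield either a compressing disk for $\partial M$ or an essential torus, in each case contradicting a standing hypothesis.
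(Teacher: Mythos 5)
Your proposal has a fatal structural gap: it never uses the automorphism $\hat{f}$, nor the standing assumption that $f$ is not standardly extendable. What you attempt to prove is the stronger, purely topological statement that no essential Type $2$-$2$ annulus can exist at all when $M=E(V)$ is boundary-irreducible and atoroidal --- but that statement is false. The Type $2$-$2$ condition constrains only the loop $\ell_1$ and its spanning disk $D$; the cut edge $e_0$ and the loop $\ell_2$ may be knotted and entangled arbitrarily, and the classification of \cite{KO} lists Type $2$-$2$ annuli precisely because they do occur as essential annuli in boundary-irreducible, atoroidal genus-two handlebody exteriors. Consequently your ``heart of the argument'' --- forcing $\Gamma$ to be the standard handcuff so that $M$ is a handlebody --- cannot be completed, and no outermost-disk or boundary-counting computation will yield the promised compressing disk or essential torus, since in genuine examples neither exists. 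The intermediate steps are also unsound as stated: the frontier of a regular neighborhood of a knotted $\ell_2$-handle cannot be ``pushed into $\Int(E(V))$,'' because the cut-edge handle is attached to it, so that torus meets $\partial V$ in a disk, and tubing it along $e_0$ raises its genus; and even granting $\ell_1,\ell_2$ unknotted, this would not pin down the embedding of $\Gamma$, since the knotting of $e_0$ is unconstrained.

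The paper's actual proof is dynamical rather than topological, and this is unavoidable: the claim concerns the W-system of the specific $M$ supporting the extension $\hat{f}$, so the contradiction must pass through $f$. By the uniqueness of the W-system (Lemma \ref{W-system}), some power $\hat{f}^n$ preserves $A$, hence preserves the separating disk $E$, the meridians $\mu_1,\mu_2$, and the longitude $\lambda_2$ (since $a_0$ is separating and $a_1$ is not, the boundary circles cannot be exchanged). Cho's description of the relevant group \cite{Cho} then gives $\hat{f}^n|_V=\alpha^{\epsilon}\circ\beta_1^{n_1}\circ\beta_2^{n_2}$, so that $(\hat{f}^n|_{\partial V})^2=T_{a_0}^{\,n_1+n_2}$. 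Theorem \ref{McC}, combined with the boundary-irreducibility of $M$ (note $a_0$ is separating, so it cannot cobound an annulus with a second twisting curve), forces $n_1+n_2=0$; using $\beta_1\circ\beta_2^{-1}=\alpha$, the map $\hat{f}^n|_V$ is the identity or the hyperelliptic involution, so $f$ has finite order, contradicting Lemma \ref{lem:finite order} together with the standing assumption. None of this machinery --- invariance of $A$ under a power of $\hat{f}$, Cho's generators, McCullough's twist theorem, and the reduction to the finite-order case --- appears in your proposal, and without it the claim is out of reach.
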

\begin{proof}[Proof of Claim $\ref{claim:Type 2-2}$]
Suppose that $\mathcal{W}$ contains a Type 2-2 annulus $A$. 
Let $a_0$ be the component of $\partial A$ that is the boundary of 
an essential disk $E$ in $V$, and let $a_1$ be the other component. 
By cutting $V$ along $E$, we have two solid tori $X_1$ and $X_2$. 
Without loss of generality we can assume that $a_1 \subset \partial X_1$. 
By the definition of a Type 2-2 annulus, $a_1$ is the preferred longitude of $X_1$. 
Let $\mu_i$ ($i=1,2$) be the meridian of $X_i$, and 
$\lambda_2$ be the preferred longitude of $X_2$ 
(see Figure \ref{fig:type2-2_claim} (i)). 
By the uniqueness of $\mathcal{W}$ (Lemma \ref{W-system}), 
it holds $\hat{f}^n (A) = A$ for some natural number $n$. 
Using $2n$ instead of $n$ if necessary, 
we can assume that 
$\hat{f}^n|_V$ is orientation-preserving. 
Since $a_0$ is separating on $\partial V$ whereas $a_1$ is non-separating, 
$\hat{f}^n$ does not exchange $a_0$ and $a_1$. 
Thus $\hat{f}^n$ preserves each of $\mu_1$, $\mu_2$ and $\lambda_2$. 
It follows from Cho \cite{Cho} that $\hat{f}^n |_V$ is 
a composition of $\alpha$, $\beta_1$, $\beta_2$ defined as follows. 
The map $\alpha$ comes from the hyperelliptic involution of $\partial V$. 
The maps $\beta_1$ and $\beta_2$ are the half-twists of handles shown in 
Figure \ref{fig:type2-2_claim} (ii). 
We note that $(\beta_1 \circ \beta_2)|_{\partial V} = T_{a_0}$, 
where $T_{a_0}$ is the Dehn twist along $a_0$. 
Then we have ${\beta_1}^2|_{\partial V} = {\beta_2}^2|_{\partial V} =T_{a_0}$ and  $\beta_1 \circ {\beta_2}^{-1} = \alpha$. 
Since $\alpha$, $\beta_1$ and $\beta_2$ are mutually commutative, 
we can write $\hat{f}^n|_V$ in the form: 
\[ \hat{f}^n|_V = \alpha^\epsilon \circ {\beta_1}^{n_{1}} \circ {\beta_2}^{n_{2}},~
(\epsilon \in \{ 0,1\}, ~ n_i \in \Integer).\]
Since 
$(\hat{f}^n|_V)^2 = {\beta_1}^{2n_{1}} \circ {\beta_2}^{2n_{2}}$, 
it holds $(\hat{f}^n|_{\partial V})^2 = {T_{a_0}}^{n_{1} + n_{2}}$. 
Suppose that $n_{1} + n_{2} \neq 0$.
Sine $(\hat{f}^n|_{\partial V})^2$ is extendable over $S^3$, 
$a_0$ bounds a disk in $M$ by Theorem \ref{McC}. 
This contradicts the boundary-irreducibility of $M$. 
Hence $n_1 + n_2 = 0$, and thus we have 
$\hat{f}^n|_V = \alpha \circ {\beta_1}^{n_1} \circ {\beta_2}^{-n_1}$. 
Recalling that $\beta_1 \circ {\beta_2}^{-1} = \alpha$, 
we see that $\hat{f}^n|_V$ is either the identity or $\alpha$. 
By Lemma \ref{lem:finite order}, $f$ is standardly extendable over $S^3$, 
which is a contradiction. 
\end{proof}
\begin{center}
\begin{overpic}[width=14cm,clip]{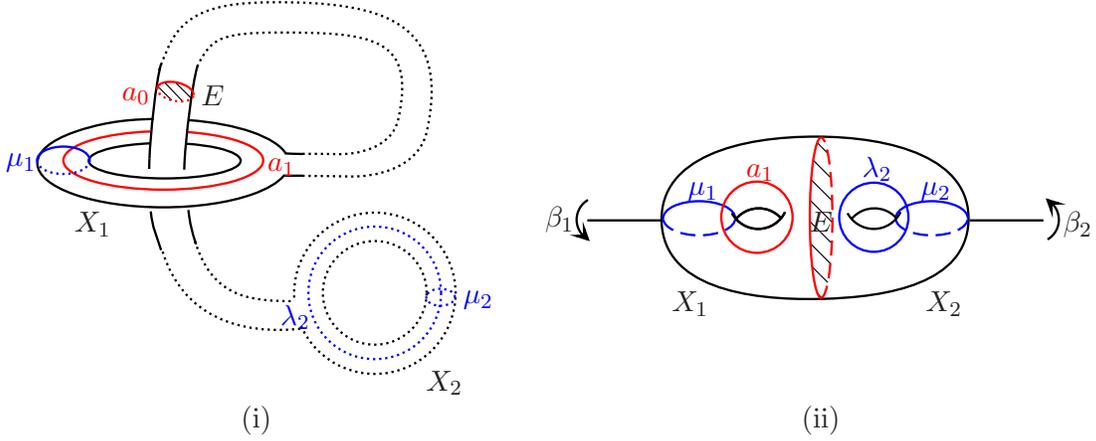}
  \linethickness{3pt}
  \put(83,0){(i)}
  \put(295,0){(ii)}
  \put(38,124){\color{red} $a_0$}
  \put(93,97){\color{red} $a_1$}
  \put(274,95){\color{red} $a_1$}
  \put(68,122){$E$}
  \put(298,74){$E$}
  \put(198,76){$\beta_1$}
  \put(394,74){$\beta_2$}
  \put(20,74){$X_1$}
  \put(153,14){$X_2$}
  \put(246,44){$X_1$}
  \put(342,44){$X_2$}
  \put(-6,99){\color{blue} $\mu_1$}
  \put(167,47){\color{blue} $\mu_2$}
  \put(98,39){\color{blue} $\lambda_2$}
  \put(252,88){\color{blue} $\mu_1$}
  \put(340,88){\color{blue} $\mu_2$}
  \put(318,95){\color{blue} $\lambda_2$}
\end{overpic}
\captionof{figure}{(i) The curves $a_0$, $a_1$, $\mu_1$, $\mu_2$ and $\lambda_2$ on $\partial V$; 
(ii) The maps $\beta_1$ and $\beta_2$.}
\label{fig:type2-2_claim}
\end{center}

\begin{claim}
\label{claim:Type 3-3}
The W-system $\mathcal{W} $ does not contain Type $3$-$3$ annuli.  
\end{claim}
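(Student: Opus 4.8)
The plan is to mimic the structure of the proof of Claim \ref{claim:Type 2-2} and derive a contradiction by showing that if $\mathcal{W}$ contained a Type $3$-$3$ annulus, then $f$ (or a suitable power of $\hat f$) would force either the boundary-reducibility of $M$ or the finiteness of the order of $f$, both of which contradict our standing assumptions. First I would fix a Type $3$-$3$ annulus $A \in \mathcal{W}$. By the definition of a Type $3$-$3$ annulus, $A$ is associated to an essential separating disk $E$ in $V$ that cuts $V$ into two solid tori $X_1$ and $X_2$, with the two boundary curves $\partial A = a_1 \sqcup a_2$ lying on $\partial X_1$ and $\partial X_2$ respectively, each being an essential (non-disk-bounding) curve on the corresponding solid torus. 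By Lemma \ref{W-system} (uniqueness of the W-system), there is a natural number $n$ with $\hat f^n(A) = A$; replacing $n$ by $2n$ if necessary, I would arrange that $\hat f^n|_V$ is orientation-preserving and fixes each of $a_1$ and $a_2$ (they cannot be exchanged if, say, their slopes on $X_1$ and $X_2$ differ, but in any case an even power fixes each boundary circle).

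Next I would pin down the meridian structure. The key observation is that $A$ being a W-system annulus means $\hat f^n$ preserves $A$ and hence preserves the separating disk $E$ together with the two solid tori $X_1, X_2$ — here I would invoke the uniqueness results from Section \ref{sec:Preliminaries}, specifically Lemma \ref{case2}, which tells us that $E$ is the \emph{unique} essential separating disk in $V$ disjoint from $a_1 \cup a_2$, so $\hat f^n(E) = E$. Consequently $\hat f^n$ preserves the meridians $\mu_1, \mu_2$ of $X_1, X_2$ and preserves (up to isotopy, by the fixing of $a_1, a_2$) the longitudinal data on each solid torus. As in Claim \ref{claim:Type 2-2}, I would then appeal to Cho \cite{Cho} to write $\hat f^n|_V$ explicitly as a composition of the hyperelliptic involution $\alpha$ and the two handle half-twists $\beta_1, \beta_2$, giving $\hat f^n|_V = \alpha^\epsilon \circ \beta_1^{\,n_1} \circ \beta_2^{\,n_2}$.

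The endgame is the same Dehn-twist obstruction argument: squaring gives $(\hat f^n|_{\partial V})^2 = T_{a_0}^{\,n_1 + n_2}$ (where $a_0 = \partial E$), and since this power of $\hat f$ extends over $S^3$, Theorem \ref{McC} would force $a_0$ to bound a disk in $M$ whenever $n_1 + n_2 \neq 0$ — contradicting the boundary-irreducibility of $M$. Hence $n_1 + n_2 = 0$, which collapses $\hat f^n|_V$ to the identity or $\alpha$, so $\hat f^n|_V$ has finite order, whence $f$ has finite order in $\MCG(\Sigma)$; Lemma \ref{lem:finite order} then yields that $f$ is standardly extendable over $S^3$, the desired contradiction.

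The main obstacle I anticipate is the step establishing that $\hat f^n$ actually preserves the separating disk $E$ and the solid-torus decomposition $X_1 \cup_E X_2$, since Cho's normal-form theorem presupposes an invariant handle structure on $V$ adapted to $E$. For the Type $2$-$2$ case this was immediate because the two boundary curves $a_0, a_1$ were distinguished topologically (one separating, one non-separating) and $a_0 = \partial E$ lay directly on $\partial V$. For Type $3$-$3$ neither boundary curve bounds a disk in $V$, so I cannot read off $E$ from $\partial A$ alone; instead I must first reconstruct $E$ from the data $a_1 \cup a_2$, which is exactly what Lemma \ref{case2} provides — it guarantees $E$ is the unique such separating disk, and uniqueness immediately upgrades to $\hat f^n$-invariance. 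Verifying that the hypotheses of Lemma \ref{case2} genuinely hold here (that each $c_i = a_i$ is essential and non-trivial in $\pi_1(V)$, which follows from the Type $3$-$3$ definition) is the delicate bookkeeping I would need to get right before the rest of the argument runs in parallel to Claim \ref{claim:Type 2-2}.
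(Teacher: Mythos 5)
Your proposal is correct and takes essentially the same route as the paper: uniqueness of the W-system to get $\hat{f}^n(A)=A$, Lemma \ref{case2} to upgrade invariance of $a_1\cup a_2$ to invariance of the separating disk $E$ (and hence of the meridian--longitude data), Cho's normal form, Theorem \ref{McC} to force the twist power along $\partial E$ to vanish, and Lemma \ref{lem:finite order} for the final contradiction. The only (harmless) deviation is that you pass to a further even power of $\hat{f}$ to prevent the two solid tori from being exchanged and can then square the normal form $\alpha^{\epsilon}\circ\beta_1^{n_1}\circ\beta_2^{n_2}$, whereas the paper keeps the possible exchange, includes the handle-swapping generator $\gamma$ in the normal form, and instead computes the fourth power, $(\hat{f}^n|_{\partial V})^4 = T_{\partial E}^{2(n_1+n_2)}$.
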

\begin{proof}[Proof of Claim $\ref{claim:Type 3-3}$]
Suppose that $\mathcal{W}$ contains a Type 3-3 annulus $A$. 
Then there exist two disjoint solid tori $X_1$ and $X_2$ in $S^3$, and 
an  arc $\tau$ connecting $\partial X_1$ and $\partial X_2$ with 
$V = X_1 \cup X_2 \cup \Nbd (\tau)$ such that 
$A$ is properly embedded in 
$E(X_1 \sqcup X_2)$, and $a_i := A \cap X_i$ ($i=1,2$) is  
a simple closed curve on 
$\partial X_i$ that does not bound a disk in $X_i$. 
Let $E$ be the cocore of the 1-handle $\Nbd(\tau)$, see Figure \ref{fig:type3-3_claim} (i). 
\begin{center}
\begin{overpic}[width=10cm,clip]{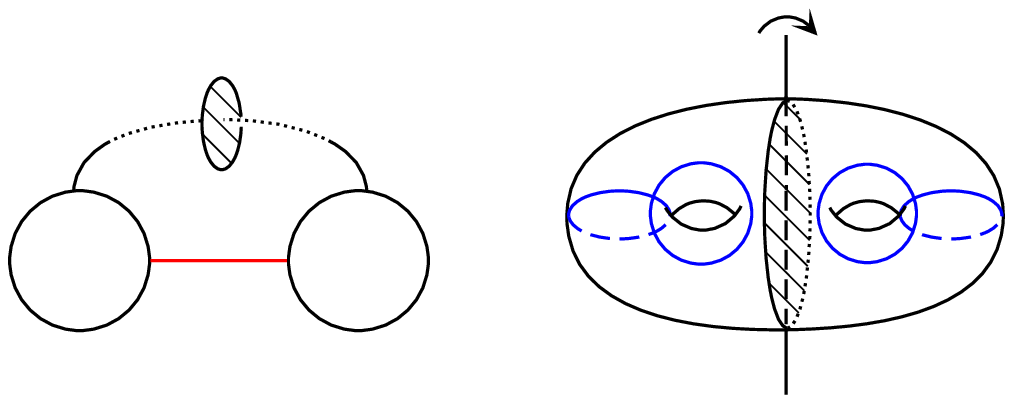}
  \linethickness{3pt}
  \put(57,0){(i)}
  \put(213,0){(ii)}
  \put(19,55){$X$}
  \put(96,55){$X$}
  \put(170,33){$X_1$}
  \put(260,33){$X_2$}
  \put(168,82){\color{blue} $\mu_1$}
  \put(260,82){\color{blue} $\mu_2$}
  \put(190,89){\color{blue} $\lambda_1$}
  \put(235,89){\color{blue} $\lambda_2$}
  \put(27,93){$\tau$}
  \put(216,132){$\gamma$}
  \put(59,114){${E}$}
  \put(214,67){${E}$}
  \put(58,62){\color{red} $A$}
\end{overpic}
\captionof{figure}{(i) The disk $E$ and the preferred longitudes $\lambda_i$ ($i=1,2$) 
of $X_i$; (ii) The map $\gamma$.}
\label{fig:type3-3_claim}
\end{center}
By Lemma \ref{case2}, $E$ is the unique separating disk in $V$ disjoint from 
$a_1 \cup a_2$. 
Let $\mu_i$ and $\lambda_i$ ($i=1,2$) be the meridian and 
the preferred longitude of $X_i$. 
By the uniqueness of $\mathcal{W}$ (Lemma \ref{W-system}), 
there exists a natural number $n$ with $\hat{f}^n (A) = A$. 
Using $2n$ instead of $n$ if necessary, 
we can assume that 
$\hat{f}^n|_V$ is orientation-preserving. 
Since $\hat{f}^n$ preserves $a_1 \cup a_2$, 
it preserves $E$ as well by Lemma \ref{case2}. 
Thus $\mu_1 \cup \mu_2$ and $\lambda_1 \cup \lambda_2$ are also preserved 
by $\hat{f}^n|_V$. 
It follows again from Cho \cite{Cho} that $\hat{f}^n |_V$ is 
a composition of $\alpha$, $\beta_1$, $\beta_2$ defined in Case 1, and one more element 
$\gamma$ shown in Figure \ref{fig:type3-3_claim} (ii). 
The hyperelliptic involution $\alpha$ is commutative with the other maps. 
The maps $\beta_1$, $\beta_2$ and $\gamma$ satisfy 
${\beta_1}^2|_{\partial V} = {\beta_2}^2|_{\partial V} =T_{\partial E}$, 
$\beta_1 \circ {\beta_2}^{-1} = \alpha$ and 
$\gamma \circ \beta_1 = \beta_2 \circ \gamma$. 
Hence, we can write $\hat{f}^n|_V$ in the form
\[
\hat{f}^n|_V = \alpha^{\epsilon_1} \circ {\beta_1}^{n_{1}} \circ {\beta_2}^{n_{2}} \circ \gamma ^{\epsilon_2} ~(\epsilon_i \in \{ 0,1\}, ~ n_i \in \Integer) . \] 
Then it holds 
\begin{eqnarray*}
(\hat{f}^n|_V)^4 = \left\{ 
\begin{array}{ll}
{\beta_1}^{4n_{1}} \circ {\beta_2}^{4n_{2}} & (\mbox{if }\epsilon _2 = 0); \\
{\beta_1}^{2(n_{1} + n_{2})} \circ {\beta_2}^{2(n_{1} + n_{2})} & (\mbox{if }\epsilon_2 = 1),\\
\end{array}
\right.
\end{eqnarray*}
hence $(\hat{f}^n|_{\partial V})^4 = {T_{\partial E}}^{2(n_{1} + n_{2})}$.  
The rest of the proof runs as in Claim \ref{claim:Type 2-2}. 
\end{proof}

\begin{claim}
\label{claim:Type 4-1}
The W-system $\mathcal{W} $ does not contain Type $4$-$1$ annuli.  
\end{claim}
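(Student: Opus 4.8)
The plan is to follow the same strategy that succeeded for Claims \ref{claim:Type 2-2} and \ref{claim:Type 3-3}: assume $\mathcal{W}$ contains a Type $4$-$1$ annulus $A$, use the uniqueness of the W-system (Lemma \ref{W-system}) to arrange that some power $\hat f^n|_V$ fixes $A$ and is orientation-preserving, pin down the structure this forces on $\hat f^n|_V$, and derive a contradiction with the assumption that $f$ is not standardly extendable over $S^3$. The key new feature, compared with the previous two claims, is that the annulus $A$ of Type $4$-$1$ does not bound a compressing disk in $V$; instead its two boundary circles cobound an annulus $A'$ on $\partial V$ whose core $k_{A'}$ is an essential (indeed non-separating) curve, and $\Cl(\partial V\setminus A')$ is the two-holed torus $P$ associated to an Eudave-Mu\~noz knot. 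So the first step is to record these objects and the associated two-holed torus $P$, exactly as in the paragraph preceding Lemma \ref{Case3-lemm}.

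Next I would invoke Lemma \ref{Case3-lemm}: the relative handlebody $(V, k_{A'})$ admits no non-separating essential annulus. This is the crucial structural input that replaces the ``unique separating disk'' arguments of the earlier claims. Since $\hat f^n(A)=A$ and $A'$ is the annulus on $\partial V$ cobounded by $\partial A$, the map $\hat f^n|_V$ preserves $A'$, hence preserves its core curve $k_{A'}$ and the two-holed torus $P$ up to isotopy. The plan is then to read off, via Lemma \ref{rel-handlebody} (Johannson) applied to the boundary-irreducible relative handlebody $(V, k_{A'})$, that $\MCG(V, k_{A'})/\mathcal A(V, k_{A'})$ is finite; combined with Lemma \ref{Case3-lemm}, which says $\mathcal A(V,k_{A'})$ is generated only by twists along separating essential annuli in $(V,k_{A'})$, this should force the relevant mapping class of $\hat f^n|_V$ into a finite group modulo twists whose boundary curves are all \emph{separating} on $\partial V$. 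The idea is that any twist appearing must be along a separating curve, so that by Theorem \ref{McC} (McCullough), if a nontrivial power of such a twist extended over the boundary-irreducible $M$, the corresponding separating curve would have to bound a disk in $M$, contradicting boundary-irreducibility. Hence the twisting exponents vanish and $\hat f^n|_V$ lies in a finite group.

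From finiteness of $\hat f^n|_V$ in the appropriate quotient one concludes that $\hat f^n|_V$, and therefore $\hat f^n|_{\partial V}=f^n$, has finite order in $\MCG(\Sigma)$; then Lemma \ref{lem:finite order} gives that $f$ is standardly extendable over $S^3$, the desired contradiction. I expect the main obstacle to be the bookkeeping that converts the abstract finiteness statement from Lemmas \ref{rel-handlebody} and \ref{Case3-lemm} into the concrete conclusion that the only twists that can occur are along \emph{separating} curves on $\partial V$, so that Theorem \ref{McC} applies cleanly and kills the twist exponents. Equivalently, the delicate point is verifying that $A'$ (and hence $k_{A'}$) really is preserved \emph{up to isotopy} by the chosen power, and that no non-separating twisting can hide in $\hat f^n|_V$; this is precisely where Lemma \ref{Case3-lemm}, ruling out non-separating essential annuli in $(V,k_{A'})$, does the decisive work, so the argument hinges on applying it correctly to the twists generating $\mathcal A(V,k_{A'})$.
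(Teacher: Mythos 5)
Your setup and the first half of your plan coincide with the paper's proof: introduce $A'$, $k_{A'}$ and the two-holed torus $P$, check that $(V,k_{A'})$ is boundary-irreducible (the paper does this by noting $P$ is incompressible in $E(K)$ --- a point you should not skip), apply Lemma \ref{rel-handlebody} to get that $\MCG(V,k_{A'})/\mathcal{A}(V,k_{A'})$ is finite, and use Lemma \ref{Case3-lemm} to reduce to twists along \emph{separating} essential annuli. The genuine gap is in how you dispose of those twists. Your claim that their boundary curves are all separating on $\partial V$ is false: the annulus obtained by pushing the interior of an annular neighborhood of $k_{A'}$ in $\partial V$ slightly into $V$ is incompressible, separating in $V$, essential in $(V,k_{A'})$ (any boundary-compressing disk must cross $k_{A'}$), and its boundary consists of two parallel copies of the \emph{non-separating} curve $k_{A'}$. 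With such curves present, the McCullough step collapses: Theorem \ref{McC} is a dichotomy, and the alternative that $c_i$ and $c_j$ cobound an incompressible annulus in $M$ is actually realized here, since parallel essential curves on $\partial V=\partial M$ cobound a boundary-parallel incompressible annulus in the boundary-irreducible $M$ (likewise the two circles of $\partial A$ cobound $A$ itself). So no contradiction arises and the twist exponents cannot be killed this way. The single-curve McCullough trick worked in Claims \ref{claim:Type 2-2} and \ref{claim:Type 3-3} precisely because there was only one twisting curve, making the annulus alternative vacuous; with several curve classes it gives nothing.

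The paper's actual mechanism is different and purely surface-theoretic, and it is what your argument is missing: since $V$ has genus two, the two boundary circles of \emph{any} incompressible separating annulus $B$ in $V$ are parallel on $\partial V$ (cutting $V$ along $B$ leaves a solid torus whose boundary is $B$ together with an annulus on $\partial V$; the paper phrases this via disjoint separating curves on a genus-two surface being parallel). Hence the restriction $T_B|_{\partial V}$ is already trivial in $\MCG(\partial V)$, and by the injectivity of $\MCG(V)\to\MCG(\partial V)$ the annulus twists contribute nothing, so $\MCG(V,k_{A'})$ itself is finite. Then $\hat{f}^n|_V\in\MCG(V,k_{A'})$, obtained from the uniqueness of $\mathcal{W}$ exactly as you propose, forces $f$ to have finite order, contradicting Lemma \ref{lem:finite order} --- with no appeal to Theorem \ref{McC}, or to the extension over $M$, anywhere in this claim. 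To repair your proof, replace the ``separating boundary curves plus McCullough'' step by this parallelism observation.
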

\begin{proof}[Proof of Claim $\ref{claim:Type 4-1}$]
Suppose that $\mathcal{W}$ contains a Type 4-1 annulus $A$. 
The two circles $\partial A$ are parallel on $\partial V$, thus 
cobound an annulus $A^\prime$ on $\partial V$. 
We note that the union $A \cup A^\prime$ is the boundary of 
a regular neighborhood $\Nbd(K)$ of an 
Eudave-Mu\~noz knot $K$. 
The annulus $A$ cuts $M$ into the solid torus $\Nbd(K)$ and 
a handlebody $W$ of genus two. 
The core $k_{A'}$ of the annulus $A'$ is an essential simple closed curve on $\partial V$. 
Set $P := \Cl (\partial V \setminus A')$. 
$P$ is then a two-holed torus.

Since $P$ is incompressible in $E(K)$, $(V, k_{A'})$ is boundary-irreducible. 
Thus, by Lemma \ref{rel-handlebody}, 
$\MCG (V, k_{A'}) / \mathcal{A}(V, k_{A'})$ is a finite group. 
By Lemma \ref{Case3-lemm}, 
the relative handlebody $(V, k_{A^\prime})$ admits no non-separating essential annuli. 
Since the genus of $V$ is two, the boundary circles of any incompressible, separating annulus $B$ 
in $V$ is parallel on $\partial V$. 
Indeed, each boundary circle of $B$ is essential and separating on $\partial V$, and 
two disjoint, separating, essential, simple closed curves on $\partial V$ are parallel on $\partial V$. 
Hence the restriction $T_B|_{\partial V}$ of the twist $T_B$ along $B$ is trivial 
in $\MCG (\partial V)$. 
It then follows from the well-known fact that 
the natural homomorphism $\MCG (V) \to \MCG(\partial V)$ 
taking $h \in \MCG(V)$ to $h|_{\partial V} \in \MCG(\partial V)$ is injective, 
that $\MCG(V, k_{A'})$ it self is a finite group. 
By the uniqueness of $\mathcal{W}$ (Lemma \ref{W-system}), 
there exists a natural number $n$ with $\hat{f}^n (A) = A$. 
In particular, $\hat{f}^n |_V (k_{A'}) \in \MCG (V, k_{A'})$. 
Therefore, the order of $\hat{f}^n|_V$ (and so the order of $f$) is finite, 
which contradicts Lemma \ref{lem:finite order}.
\end{proof}

\begin{claim}
\label{claim:Type 2-1}
The W-system $\mathcal{W} $ does not contain Type $2$-$1$ annuli.  
\end{claim}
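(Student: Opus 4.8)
The plan is to mirror the proofs of Claims \ref{claim:Type 2-2} and \ref{claim:Type 3-3}: assuming a Type $2$-$1$ annulus lies in $\mathcal{W}$, I will pin down the restriction $\hat{f}^n|_V$ using Cho's generators and then force it to be of finite order, contradicting Lemma \ref{lem:finite order}.

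First I would show that $\mathcal{W}$ consists of the single annulus $A$. Write $\partial A = a_1 \sqcup a_2$, where $a_1 = \partial E$ bounds a (necessarily non-separating) disk $E$ in $V$ and $a_2$ is primitive. Every other annulus of $\mathcal{W}$ is essential and can be isotoped off $A$; since $M$ admits none of Types $1$, $2$-$2$, $3$-$1$, $3$-$3$, $4$-$1$, $4$-$2$ (by Lemma \ref{lem:non-existence of Types 1, 3-1 and 4-2} and Claims \ref{claim:Type 2-2}--\ref{claim:Type 4-1}), such an annulus is of Type $2$-$1$ or $3$-$2$. Lemma \ref{lem:Types 2-1 and 3-2} rules out a disjoint Type $3$-$2$ annulus, and Lemma \ref{lem:uniqueness of Type 2-1} shows any disjoint Type $2$-$1$ annulus is isotopic to $A$, contradicting that members of $\mathcal{W}$ are pairwise non-parallel. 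Hence $\mathcal{W} = \{A\}$, and by uniqueness of $\mathcal{W}$ (Lemma \ref{W-system}) some power $\hat{f}^n$, which we may take to be orientation-preserving on $V$, satisfies $\hat{f}^n(A) = A$.

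Next I would analyze $\hat{f}^n|_V$. Since $a_1$ bounds a disk while $a_2$ is primitive, $\hat{f}^n$ cannot interchange them, so it preserves each of $a_1$, $a_2$, and the disk $E$; here $E$ is the unique non-separating disk of $V$ disjoint from $a_2$ by Corollary \ref{primitive}. Using the primitivity of $a_2$, I would produce the separating disk $F = \Cl(\partial \Nbd(a_2 \cup D_1) \setminus \partial V)$ of Corollary \ref{primitive}, where $D_1$ is a meridian disk of $V$ meeting $a_2$ once, and argue that $\hat{f}^n$ preserves $F$ as well: cutting $V$ along $E$ turns $F$ into the boundary-parallel disk cutting off the two scars of $E$, which together with the preserved longitude $a_2$ determines $F$ up to isotopy. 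With $F$ preserved and its two sides distinguished ($a_2$ being a longitude on one side and $a_1$ a meridian on the other, so the handles cannot be swapped), Cho \cite{Cho} lets me write, exactly as in Claim \ref{claim:Type 2-2},
\[
\hat{f}^n|_V = \alpha^{\epsilon} \circ {\beta_1}^{n_1} \circ {\beta_2}^{n_2}, \quad (\epsilon \in \{0,1\},\ n_i \in \Integer),
\]
with ${\beta_1}^2|_{\partial V} = {\beta_2}^2|_{\partial V} = T_{\partial F}$ and $\beta_1 \circ {\beta_2}^{-1} = \alpha$.

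Finally I would run the computation of Claim \ref{claim:Type 2-2} verbatim: $(\hat{f}^n|_{\partial V})^2 = {T_{\partial F}}^{n_1 + n_2}$, and if $n_1 + n_2 \neq 0$ then a nonzero power of $T_{\partial F}$ extends over $M$, so by Theorem \ref{McC} the essential separating curve $\partial F$ bounds a disk in $M$, contradicting boundary-irreducibility. Hence $n_1 + n_2 = 0$ and $\hat{f}^n|_V = \alpha^{\epsilon} \circ {\beta_1}^{n_1} \circ {\beta_2}^{-n_1}$ is either the identity or $\alpha$, so $f$ has finite order; Lemma \ref{lem:finite order} then makes $f$ standardly extendable, the desired contradiction. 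I expect the main obstacle to be the middle step, namely verifying that $\hat{f}^n$ genuinely preserves the separating disk $F$ and that its stabilizer reduces to Cho's generators $\alpha, \beta_1, \beta_2$; this is where the non-separating nature of $E$ (as opposed to the separating disks in Claims \ref{claim:Type 2-2} and \ref{claim:Type 3-3}) makes the bookkeeping most delicate.
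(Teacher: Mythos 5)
Your reduction to $\mathcal{W}=\{A\}$ is exactly the paper's first step, but the core of your argument fails at the point you yourself flagged: there is no canonical separating disk $F$, and the stabilizer you invoke is not generated by $\alpha,\beta_1,\beta_2$. The uniqueness results available (Lemma \ref{2.2}, Corollary \ref{primitive}) pin down only the \emph{non-separating} disk $E$ disjoint from $a_2$; the uniqueness of a \emph{separating} disk (Lemma \ref{case2}) requires essential curves on \emph{both} solid-torus sides, and in the Type $2$-$1$ configuration the side containing $E$ carries no constraining curve. Concretely, let $\phi$ be the slide of one foot of the handle dual to $E$ around a loop on $\partial V$ parallel to $a_2$. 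Then $\phi$ preserves $a_1$, $a_2$ and $E$ (it can be supported away from $a_2$, and it fixes the cocore $E$ up to isotopy), yet on $\pi_1(V)=\langle x,y\rangle$ (with $a_2\simeq x$) it induces $y\mapsto xy$, an infinite-order element of $\mathrm{Out}(F_2)$, while $\alpha,\beta_1,\beta_2$ induce only $x\mapsto x^{\pm1}$, $y\mapsto y^{\pm1}$. In particular $\phi$ carries your disk $F=\Cl(\partial\Nbd(a_2\cup D_1)\setminus\partial V)$ (splitting $\langle x\rangle * \langle y\rangle$) to a non-isotopic separating disk (splitting $\langle x\rangle * \langle xy\rangle$) still disjoint from $a_2\cup E$: your ``determines $F$ up to isotopy'' step is false, because the arc joining the two scars of $E$ in the cut-open solid torus can wind arbitrarily around the longitude. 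Consequently $\hat{f}^n|_V$ may contain a slide factor, $f^n$ need not be a product of twists along $a_1,a_2$, and Theorem \ref{McC} cannot be applied to conclude finite order. Whether such a slide actually occurs cannot be decided from the $V$-side alone, which is why your proposal, which uses nothing about $M$ beyond boundary-irreducibility, cannot close.

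The paper's proof instead works on the outside: it cuts $M$ along $A$, shows the resulting piece $M_0$ is atoroidal, and splits into two cases. If $M_0$ is boundary-irreducible, then $(M_0,\partial_0 M_0)$ is simple (Lemma \ref{prop-simple}) and Johannson's Proposition 27.1 makes $\MCG(M_0,\partial_0 M_0)$ finite, so $f^n={T_{a_1}}^{m_1}\circ{T_{a_2}}^{m_2}$ on $\partial V$ and only then does Theorem \ref{McC} kill the twists --- note this finiteness on the $M$-side is precisely what excludes the slide factor your argument cannot rule out. If $M_0$ is a genus-two handlebody, $V_0=V\cup\Nbd(A)$ yields a genus-two Heegaard splitting, and the paper argues via primitive disks: if both disks $D_1,D_2$ bounded by $\partial\hat{A}$ are primitive, $M$ would itself be a handlebody, a contradiction; otherwise Cho--McCullough's Proposition 10.1 forces $\hat{f}^4|_{V_0}=\id$ and finiteness follows. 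This second case, which your proposal has no counterpart for, is essential: it is exactly the situation where the $V$-side stabilizer alone is too large.
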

\begin{proof}[Proof of Claim $\ref{claim:Type 2-1}$]
Suppose that $\mathcal{W}$ contains a Type 2-1 annulus $A$. 
By Lemmas \ref{lem:uniqueness of Type 2-1}, \ref{lem:Types 2-1 and 3-2}, and 
Claims \ref{claim:Type 2-2}--\ref{claim:Type 4-1}, we have $\mathcal{W} = \{ A \}$. 

In the following, we show that the order of $f$ is finite, 
which contradicts Lemma \ref{lem:finite order}.
Let $M_0$ be the result of cutting $M$ off by the W-system $\mathcal{W} = \{A\}$. 
Then $M_0$ is also atoroidal.  
Indeed, suppose not, 
that is, there exists an essential torus $T$ in $M_0$. 
Since $M$ is atoroidal, $T$ admits a compressing disk $D$ in $M$ with $D \cap A \neq \emptyset$. 
We isotope $D$ so that $\# (D \cap A)$ is minimal.  
Note that $D \cap A$ is non-empty and consists of simple closed curves on $D$. 
Let $c$ be a component of $D \cap A$ that is innermost in $D$. 
Then the disk $D_0 \subset D$ bounded by $c$ is a compressing disk for $A$, 
which is a contradiction. 
Therefore $M_0$ is atoroidal. 
Now by considering the characteristic compression body of $M_0$, 
we see that $M_0$ is either boundary-irreducible or a handlebody of genus two 
(otherwise $M_0$ contains an essential torus, a contradiction). 
Set $\partial A = a_1 \sqcup a_2$, where 
$a_1$ bounds a disk in $V$, and 
$a_2$ is a primitive curve on $\partial V$. 

\vspace{1em}

\noindent
{\it Case $1$}: The case where $M_0$ is boundary-irreducible. 

By Lemma \ref{prop-simple}, $(M_0 , \partial_0 M_0)$ is simple. 
Thus, by Proposition 27.1 of Johannson \cite{Jo2}, 
$\MCG(M_0, \partial_0 M_0)$ is a finite group, 
where we recall that 
$\partial _0 M_0 := \partial M_0 \cap \partial M$ ($= \partial V \setminus \Nbd(a_1 \cup a_2)$). 
Therefore, we have 
$\hat{f}^n|_{\partial V \setminus (\Nbd (a_1 \cup a_2))} = \mathrm{id}$ 
for a natural number $n$. 
It follows that there exist natural numbers $m_1$ and $m_2$ satisfying 
\[ f^n = \hat{f}^n|_{\partial V} = {T_{a_1}}^{m_1} \circ {T_{a_2}}^{m_2} , \]
where $T_{a_i}$ ($i=1,2$) is the Dehn twist along $a_i$. 
By Theorem \ref{McC}, we have $m_1 = m_2 = 0$. 
Indeed, if $m_1 = 0$ and $m_2 \neq 0$, or if $m_1 \neq 0$ and $m_2 = 0$, 
$M$ is a boundary-reducible, which is a contradiction. 
If $m_1 \neq 0$ and $m_2 \neq 0$, 
either each of $a_1$ and $a_2$ bounds a disk in $V$, or 
$a_1$ and $a_2$ cobound an incompressible annulus in $V$. 
Since $a_2$ is primitive in $\partial V$, the former is impossible. 
The latter is also impossible because 
as an element of $H_1(V)$, $[a_1] = 0$ whereas $[a_2] \neq 0$. 
Now we have $f^n = \mathrm{id}$, that is, the order of $f$ is finite. 

\vspace{1em}

\noindent
{\it Case $2$}: The case where $M_0$ is a handlebody of genus two. 

The union $V_0 := V \cup \Nbd(A)$ is still a handlebody of genus two with $M_0 = E(V_0)$. 
That is, $S^3 = V_0 \cup M_0$ is a Heegaard splitting. 
Define $\hat{A} \subset \Nbd(A) = A \times I$ by 
$\hat{A} = ((S^1 \times \{ 1/2 \}) \times I)$ (see Figure \ref{fig:type2-1_claim}). 
\begin{center}
\begin{overpic}[width=10cm,clip]{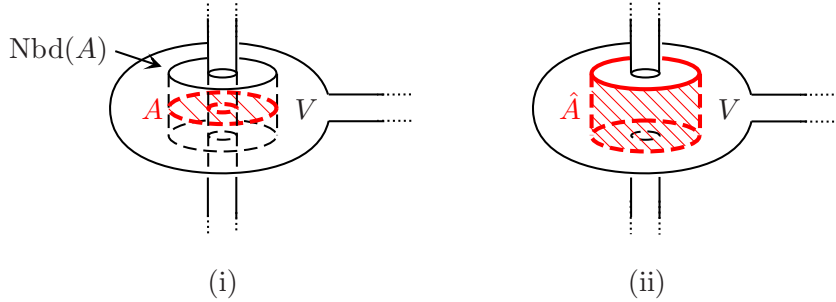}
  \linethickness{3pt}
  \put(42,0){(i)}
  \put(201,0){(ii)}
  \put(-33,88){$\Nbd(A)$}
  \put(75,65){$V$}
  \put(235,65){$V$}
  \put(17,65){\color{red} $A$}
  \put(175,65){\color{red} $\hat{A}$}
\end{overpic}
\captionof{figure}{(i) The annulus $A$; (ii) The annulus $\hat{A}$.}
\label{fig:type2-1_claim}
\end{center} 
Set $\partial \hat{A} = \hat{a}_1 \sqcup \hat{a}_2$. 
The simple closed curve $\hat{a}_i$ ($i=1,2$) bounds a non-separating disk 
$D_i$ in $V_0$. 
We note that $(V_0 \setminus \Nbd(\hat{A}) )\cong (V_0 \setminus \Nbd(A) )\cong V$.

Suppose that both $D_1$ and $D_2$ are primitive disks in $V_0$. 
Since $D_1$ and $D_2$ are disjoint, non-parallel, primitive 
disks in $V_0$, there exist 
disjoint, non-parallel, primitive 
disks $E_1$ and $E_2$ in $V_0$ such that 
$\# (D_i \cap E_j ) = \delta_{ij}$ (see Lemma 2.2 in Cho \cite{Cho}), 
where $\delta_{ij}$ is the Kronecker delta. 
Thus $V$ can be isotoped as shown in Figure \ref{fig:isotopy_of_v} 
(see also Theorem 4.4 of \cite{Koda}). 
It follows that $M = E(V)$ is a handlebody of genus two, which is a contradiction. 

Suppose that at least one of $D_1$ and $D_2$, say $D_1$, is not a primitive disk in $V_0$. 
Since $\hat{f}^2|_{V_0}$ preserves $D_1$, 
$\hat{f}^2$ is an automorphism of $(S^3, V_0, D_1)$. 
By Proposition 10.1 of Cho-McCullough \cite{Cho-McC}, 
we have $\hat{f}^4|_{V_0} =  \mathrm{id}$. 
This implies that $\hat{f}^4 |_ {\partial V \setminus (\Nbd(a_1 \cup a_2))}$ is the identity. 
Now, the finiteness of the order of $f$ follows from the 
same argument as in Case 1. 
\end{proof}
\begin{center}
\begin{overpic}[width=14.5cm,clip]{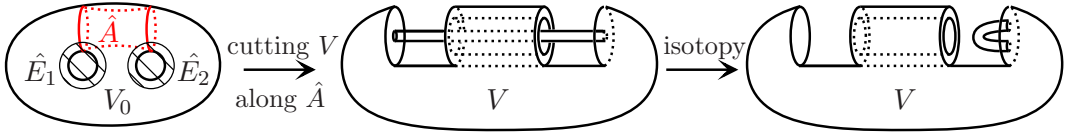}
  \linethickness{3pt}
  \put(41,39.5){\color{red} $\hat{A}$}
  \put(13,25){$\hat{E_1}$}
  \put(71,25){$\hat{E_2}$}
  \put(43,14){$V_0$}
  \put(188,14){$V$}
  \put(342,14){$V$}
  \put(90,35){\small cutting $V$}
  \put(93,15){\small along $\hat{A}$}
  \put(255,35){\small isotopy}
\end{overpic}
\captionof{figure}{The isotopy of $V$.}
\label{fig:isotopy_of_v}
\end{center} 

The following is our final claim. 

\begin{claim}
\label{claim:Type 3-2}
The W-system $\mathcal{W} $ does not contain Type $3$-$2$ annuli.  
\end{claim}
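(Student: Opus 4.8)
The plan is to mimic the structure of the preceding claims: assume $\mathcal{W}$ contains a Type $3$-$2$ annulus $A$ and derive that the order of $f$ is finite, contradicting Lemma \ref{lem:finite order}. First I would invoke the earlier lemmas to pin down the structure around $A$. By Lemma \ref{lem:A is an essential annulus in E(X)} the boundary $\partial A = a_1 \sqcup a_2$ consists of parallel primitive curves on $\partial V$, and by Lemma \ref{lem:uniqueness of X for Type 3-2} the associated solid torus $X \subset V$ (obtained by cutting $V$ along the unique non-separating disk disjoint from $a_1 \cup a_2$, via Corollary \ref{primitive}) is uniquely determined. The uniqueness of $\mathcal{W}$ (Lemma \ref{W-system}) then forces $\hat f^{n}(A)=A$ for some natural number $n$, and after replacing $n$ by $2n$ we may assume $\hat f^{n}|_V$ is orientation-preserving.

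The key step is to constrain $\hat f^{n}|_V$ using the canonical data attached to $A$. Since $\hat f^{n}$ preserves $A$, it preserves $\partial A$, hence the unique non-separating disk $D$ disjoint from $a_1 \cup a_2$, and therefore preserves the solid torus $X$ together with its meridian $\mu$ and preferred longitude $\lambda$. As in Claims \ref{claim:Type 2-2} and \ref{claim:Type 3-3}, I would then apply Cho \cite{Cho} to write $\hat f^{n}|_V$ as a composition of the hyperelliptic involution $\alpha$, the handle half-twists $\beta_1,\beta_2$, and possibly the extra generator $\gamma$, subject to the relations ${\beta_1}^2|_{\partial V}={\beta_2}^2|_{\partial V}=T_{\partial E}$, $\beta_1\circ{\beta_2}^{-1}=\alpha$, and $\gamma\circ\beta_1=\beta_2\circ\gamma$, where $E$ is the separating disk cutting $V$ into $X$ and its complementary solid torus. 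Writing $\hat f^{n}|_V=\alpha^{\epsilon_1}\circ{\beta_1}^{n_1}\circ{\beta_2}^{n_2}\circ\gamma^{\epsilon_2}$ and passing to a suitable power, I would extract that $(\hat f^{n}|_{\partial V})^{4}={T_{\partial E}}^{2(n_1+n_2)}$, exactly as in Claim \ref{claim:Type 3-3}.

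Finally I would run the now-familiar endgame: since $(\hat f^{n}|_{\partial V})^4$ is extendable over $S^3$, Theorem \ref{McC} forces $\partial E$ to bound a disk in $M$ unless $n_1+n_2=0$; as $M$ is boundary-irreducible, the former is impossible, so $n_1+n_2=0$. This collapses $\hat f^{n}|_V$ to $\alpha^{\epsilon_1}\circ\gamma^{\epsilon_2}$ times a power of $\alpha$ via $\beta_1\circ{\beta_2}^{-1}=\alpha$, leaving only finitely many possibilities, whence $\hat f^{n}|_V$ has finite order. By Lemma \ref{lem:finite order} this shows $f$ is standardly extendable over $S^3$, the desired contradiction. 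The main obstacle I anticipate is verifying that the generator-and-relation description of Cho genuinely applies here, i.e. that the stabilizer of the pair $(\mu,\lambda)$ (equivalently of the disk system cutting off $X$) is generated by $\alpha,\beta_1,\beta_2,\gamma$; the bookkeeping for $\gamma$ and the precise power needed to reach $T_{\partial E}$ is the delicate point, but it parallels Claim \ref{claim:Type 3-3} closely enough that the computation should go through.
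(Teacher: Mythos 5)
Your plan breaks down at both of its key steps, and the failure is structural rather than a matter of bookkeeping. First, the Cho generator description you want to import from Claims \ref{claim:Type 2-2} and \ref{claim:Type 3-3} does not apply in the Type $3$-$2$ setting. In those claims the preserved disk is a \emph{separating} disk $E$ cutting $V$ into two solid tori, with $\hat f^n$ preserving the meridian--longitude data of each piece; crucially, in Claim \ref{claim:Type 3-3} the canonicity of $E$ comes from Lemma \ref{case2}, which requires the two curves to lie on the boundaries of the two \emph{different} solid tori. For a Type $3$-$2$ annulus both components $a_1, a_2$ of $\partial A$ are parallel primitive curves lying on the same side (on $\partial X$), so Lemma \ref{case2} gives nothing: there is no canonical separating disk, and your twist $T_{\partial E}$ has no well-defined meaning. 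What is canonical here (Corollary \ref{primitive}, Lemma \ref{lem:uniqueness of X for Type 3-2}) is a \emph{non-separating} disk $D$, and $V$ cut along $D$ is a single solid torus $X$ carrying a $1$-handle --- a different configuration, whose stabilizer contains infinite-order elements outside $\langle\alpha,\beta_1,\beta_2,\gamma\rangle$, e.g.\ sliding a foot of the handle around the annulus $\partial X\setminus(a_1\cup a_2)$, or the twist along the annulus in $V$ cobounded by the parallel curves $a_1$ and $a_2$.

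Second, and more fatally, the McCullough endgame cannot force the residual twisting to vanish, because the residual twists here are along $a_1$ and $a_2$, and these two curves \emph{cobound the essential annulus $A$ in $M$} (as well as an annulus in $V$, being parallel on $\partial V$). Theorem \ref{McC} is therefore satisfied with nonzero twist powers: the composition of the annulus twist along $A$ in $M$ with the annulus twist in $V$ is an infinite-order element of $\Homeo(S^3,\iota(\Sigma))$ preserving $A$, $X$, $D$, and all your data. Consequently no argument confined to the handlebody side, of the kind you propose, can conclude that $f$ has finite order. This is exactly why the paper's proof instead cuts $M$ along $\mathcal{W}$ and analyzes the complementary pieces: it splits according to the two alternatives of Lemma \ref{lem:A is an essential annulus in E(X)} (cabling annulus versus boundary-parallel in $E(X)$), then according to the number $b\le 3$ of boundary-slope classes and whether the piece $M_0$ is boundary-irreducible or a handlebody, invoking Johannson's finiteness for simple pairs, Lemma \ref{rel-handlebody} for relative handlebodies, and the Goeritz-type finiteness results of Cho--McCullough and Cho--Koda for the new Heegaard splitting $S^3=V_0\cup M_0$; in each branch the leftover twist is along a \emph{single} curve $c$ (or a curve meeting $\partial A$), and only then does Theorem \ref{McC} (or preservation of $\partial A$) kill it. Your proposal omits all of this exterior-side analysis, and the contradiction you aim for is unreachable along the route you describe.
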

\begin{proof}[Proof of Claim $\ref{claim:Type 3-2}$]

Suppose first that $\mathcal{W}$ contains a Type 3-2 annulus $A$ 
satisfying the property $\mathrm{(ii)}$ in Lemma 
$\ref{lem:A is an essential annulus in E(X)}$. 
To get a contradiction in this case, we will show that the order of $f$ is finite. 
Set $\partial A = a_1 \sqcup a_2$. 
By Lemma \ref{lem:uniqueness of X for Type 3-2}, 
for $A$, the solid torus $X$ in the definition of a 
Type 3-2 annulus is uniquely determined. 
By the assumption, $A$ is parallel to an annulus $A'$ on $\partial X$. 
Let $Y$ be the parallelism region between $A$ and $A'$, which is the solid torus. 
Note that the simple closed curve $a_1$ (and so $a_2$) is primitive for $Y$. 
Let $\tau$ be an arc contained in $Y$, with endpoints on $A'$, such that 
$V = X \cup \Nbd (\tau)$. 
The annulus $A$ separates $M$ into two components $M_0$ and $M_1$, where 
$M_0 \subset Y$. 
By the same argument as in Claim \ref{claim:Type 2-1}, we can check that both 
$M_0$ and $M_1$ are atoroidal. 
Let $E$ be the cocore of the 1-handle $\Nbd(\tau)$. 
By the uniqueness of $\mathcal{W}$ (Lemma \ref{W-system}), 
it holds $\hat{f}^{n_1} (A) = A$, $f^{n_1} (a_1) = a_1$ and $\hat{f}^{n_1}|_A$ is orientation 
preserving for some natural number $n_1$.  
By Lemma \ref{2.2}, $E$ is the unique non-separating disk in $V$ disjoint from $a_1$. 
Thus we have $f^{n_1} (\partial E) = \partial E$. 

\begin{subclaim}
\label{subclaim:claim 7 parallel case}
The image of the natural map 
$\MCG(M_0, a_1, \partial E) \to \MCG(\partial M_0, a_1, \partial E)$ 
that takes $\varphi \in \MCG(M_0, a_1, \partial E)$ to 
$\varphi|_{\partial M_0}$ is a finite group. 
\end{subclaim}
\begin{proof}[Proof of Subclaim]
Suppose that there exists an incompressible annulus $B$ in $M_0$ whose boundary is disjoint from 
$a_1 \cup \partial E$. 
Remark that $B$ is separating in $Y$, thus $B$ is separating in $M_0$ as well. 
Set $\partial B = b_1 \sqcup b_2$. 
Since $B$ is disjoint from $\partial E$ we may assume that $b_1 \sqcup b_2$ lies in $\partial Y$. 
We will show that $b_1$ and $b_2$ are parallel on $\partial M_0$. 

Assume that $b_1$ and $b_2$ are not parallel on $\partial M_0$ for a contradiction. 
Since $\partial B$ is disjoint from $a_1 \subset \partial Y$, 
$b_1$ and $b_2$ are both essential or both inessential on $\partial Y$. 
The attaching region of the 1-handle $\Nbd(\tau)$ consists of two disks 
$D_+$ and $D_-$ on $\partial X \cup \partial Y$. 
Both $D_+$ and $D_-$ are parallel to $E$. 
If both $b_0$ and $b_1$ are essential on $\partial Y$, they are parallel on $\partial Y$. 
Thus in this case there exists an annulus $F$ on $\partial Y$ with $\partial F = b_1 \sqcup b_2$. 
Note that the closure $F'$ of $\partial Y \setminus F$ is also an annulus, and 
$F$ and $F'$ are parallel in $Y$. 
Since $b_1$ and $b_2$ are not parallel on $\partial M_0$ by the assumption, 
we have $F \cap (D_+ \cup D_-) = D_+$ after changing the names of $F$ and $F'$, 
and $D_+$ and $D_-$ if necessary. See Figure \ref{fig:annulus_B} (i). 
\begin{center}
\begin{overpic}[width=12cm,clip]{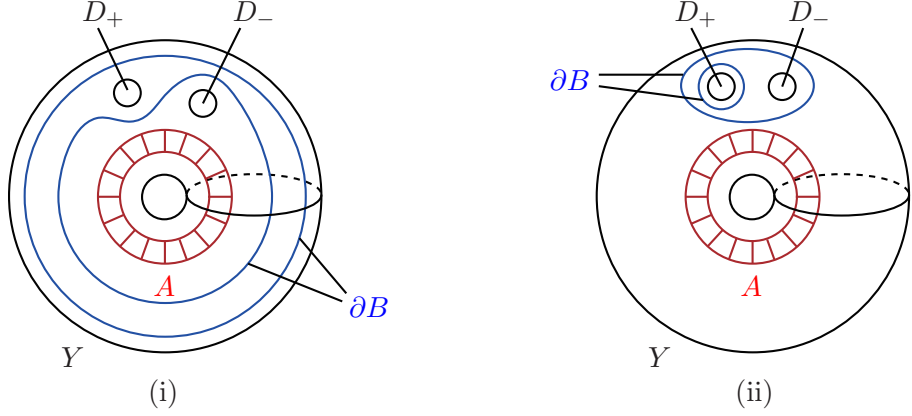}
  \linethickness{3pt}
  \put(53,0){(i)}
  \put(275,0){(ii)}
  \put(20,13){$Y$}
  \put(242,13){$Y$}
  \put(28,144){$D_+$}
  \put(252,144){$D_+$}
  \put(85,144){$D_-$}
  \put(295,144){$D_-$}
  \put(55,40){\color{red} $A$}
  \put(277,40){\color{red} $A$}
  \put(129,32){\color{blue} $\partial B$}
  \put(205,118){\color{blue} $\partial B$}

\end{overpic}
\captionof{figure}{The boundary loops of $B$ on $\partial Y$.}
\label{fig:annulus_B}
\end{center}
This implies that $B$ is non-separating in $M_0$, a contradiction. 
Thus both $b_1$ and $b_2$ are inessential on $\partial Y$. 
Each simple closed curve $b_i$ ($i=1,2$) bounds a disk $D_i$ on $\partial Y$. 
Since $B$ is incompressible in $M_0$, and $b_1$ and $b_2$ are not parallel on $\partial M_0$, 
we have $D_+ \subset D_1 \subset D_2$, $D_- \not\subset D_1$ and $D_- \subset D_2$ 
after changing the names of $b_1$ and $b_2$, 
and $D_+$ and $D_-$ if necessary. See Figure \ref{fig:annulus_B} (ii). 
This implies that $B$ is non-separating in $M_0$, again a contradiction. 
In consequence, $b_1$ and $b_2$ are parallel on $\partial M_0$. 

Now by exactly the same reason 
as Lemma \ref{rel-handlebody} (see Johannson \cite{Jo2}), the order of the group 
$\MCG (M_0, a_1, \partial E) / \mathcal{A} (M_0, a_1, \partial E)$ is finite, 
where $\mathcal{A} (M_0, a_1, \partial E)$ is the subgroup of the mapping class
group $\MCG (M_0, a_1, \partial E)$ generated by all twists along incompressible annuli in $M_0$ 
disjoint from $a_1 \cup \partial E$. 
Hence the restriction $T_B|_{\partial M_0}$ of the twist $T_B$ along any incompressible annulus $B$ in $M_0$ 
disjoint from $a_1 \cup \partial E$ is trivial in $\MCG (\partial M_0)$. 
This implies the assertion. 
\end{proof}
By the above Subclaim, 
there exists a natural number $n_2$ such that 
$\hat{f}^{n_1 n_2} |_{\partial M_0}$ is the identity as an element of $\MCG(\partial M_0, a_1, \partial E)$. 
Since $\MCG_+(A)$ is the trivial group (while $\MCG_+(A ~\rel~ \partial A) = \Integer$), 
we may assume that $\hat{f}^{n_1 n_2} |_{\partial M_0 \cap \partial V}$ is trivial as an element of 
$\MCG (\partial M_0 \cap \partial V)$. 
Therefore, we have 
$f^{n_1 n_2} = T_c^m$, where 
$c$ is the core of the annulus $\Cl ( \partial X \setminus A )$, 
$T_c$ is the Dehn twist along $c$, and 
$m$ is an integer. 
If $m \neq 0$, the simple closed curve $c$ bounds a disk $D$ in $M$ by Lemma \ref{McC}, a contradiction. 
Thus we have $m=0$, which implies that the order of $f$ is finite, which is a contradiction. 

\vspace{1em}

Now the only remaining possibility is that $\mathcal{W}$ consists of only Type 3-2 annuli  
satisfying the property $\mathrm{(i)}$ in Lemma 
$\ref{lem:A is an essential annulus in E(X)}$. 
Let $A \in \mathcal{W}$.  
By Lemma \ref{lem:uniqueness of X for Type 3-2}, 
for $A$, the solid torus $X$ in the definition of a 
Type 3-2 annulus is uniquely determined. 
Further, $X$ is a neighborhood of a torus knot or a cable knot, 
and $A$ is the cabling annulus for $X$. 
The number $b$ of isotopy classes in $\partial V$ of 
the boundary circles of the annuli in $\mathcal{W}$ 
is at most three. 

Suppose first that $b = 2$. 
Then there exists a Type 3-2 annulus $A'$ in $\mathcal{W}$ whose boundary circles are 
not parallel to those of $A$. 
By the uniqueness of $\mathcal{W}$ (Lemma \ref{W-system}), 
it holds $\hat{f} |_V (\partial A \cup \partial A' ) = \partial A \cup \partial A'$. 
Set $\partial A = a_1 \sqcup a_2$ and $\partial A' = a_1' \sqcup a_2'$.  
Since $a_1$ and $a_1'$ are non-separating and not parallel on $\partial V$, 
attaching 2-handles to $V$ along $a_1$ and $a_1'$, and then capping off 
the resulting boundary by a 3-handle, we obtain a closed 3-manifold $N$. 
Suppose first that $N$ is homeomorphic to $S^3$. 
By applying Alexander's tricks step by step, $\hat{f}|_V$ extends to an automorphism of $N$. 
This implies that $f$ is standardly extendable over $S^3$, which is a contradiction. 
Thus $N$ is not homeomorphic to $S^3$ (so $N$ is a lens space or $\Real P^3$); we do not know if this is the case. 
In this case, since both $a_1$ and $a_1'$ are primitive with respect to $V$ and 
$N \not\cong S^3$,  
the relative handleobody $(V, k)$ is boundary-irreducible, 
where $k = a_1 \cup a_1'$. 
By Lemma \ref{rel-handlebody}, $\MCG (V, k) / \mathcal{A} (V, k)$ is a finite group. 
We can further show that $(V, k)$ admits no non-separating essential annuli. 
Indeed, if there exists a non-separating annulus in $(V, k)$, 
it remains to be non-separating in the 3-manifold obtained by attaching a 2-handle to $V$ along $a_1$, 
which is a solid torus. 
This is a contradiction. 
Now, the same argument as in Claim \ref{claim:Type 4-1} shows that the order of $f$ is finite, 
which contradicts Lemma \ref{lem:finite order}.
 
Similar arguments apply to the case of $b=3$. 

In what follows, we suppose that $b=1$. 
Then $\mathcal{W}$ consists of one or two annuli, and in the latter case, 
the two annuli are parallel in $E(X)$ and the simple arc $\tau$ in the definition of a 
Type 3-2 annulus is contained in the parallelism region. 
See Figure \ref{fig:type3-2_claim}. 
\begin{center}
\begin{overpic}[width=11cm,clip]{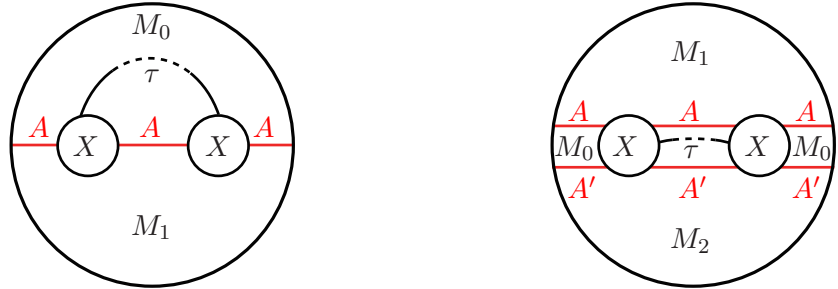}
  \linethickness{3pt}
  \put(24,50){$X$}
  \put(74,50){$X$}
  \put(46,96){$M_0$}
  \put(46,20){$M_1$}
  \put(51,78){$\tau$}
  \put(7,57){\color{red} $A$}
  \put(49,57){\color{red} $A$}
  \put(92,57){\color{red} $A$}

  \put(228,50){$X$}
  \put(278,50){$X$}
  \put(206,50){$M_0$}
  \put(296,50){$M_0$}
  \put(250,86){$M_1$}
  \put(250,15){$M_2$}
  \put(255,49){$\tau$}
  \put(211,63){\color{red} $A$}
  \put(253,63){\color{red} $A$}
  \put(297,63){\color{red} $A$}
  \put(211,34){\color{red} $A'$}
  \put(253,34){\color{red} $A'$}
  \put(296,34){\color{red} $A'$}
\end{overpic}
\captionof{figure}{The W-decompositions for $E(V)$.}
\label{fig:type3-2_claim}
\end{center} 

First, let $\mathcal{W} = \{ A, A' \}$. 
Let $M_0$, $M_1$, $M_2$ be the result of cutting $M$ off by the W-system $\mathcal{W} = \{A, A'\}$, 
where $M_0 \cap \Nbd(\tau) \neq \emptyset$ for the simple arc $\tau$ in $E(X)$ with 
$V = X \cup \Nbd(\tau)$ (see the right-hand side in Figure \ref{fig:type3-2_claim}). 
Both of $M_1$ and $M_2$ are solid tori, otherwise $E(V)$ is toroidal and it contradicts our assumption. 
We note that $\partial M_0$ is a closed surface of genus two.  
By the same argument as in Claim \ref{claim:Type 2-1}, $M_0$ is atoroidal. 
Thus by considering the characteristic compression body of $M_0$, 
we see that $M_0$ is either boundary-irreducible or a handlebody of genus two. 

Suppose that $M_0$ is boundary-irreducible. 
By Lemma \ref{prop-simple}, $(M_0 , \partial_0 M_0)$ is simple. 
Due to Proposition 27.1 of Johannson \cite{Jo2}, 
$\MCG(M_0, \partial_0 M_0)$ is a finite group. 
Thus, there exists a natural number $n$ with 
$\hat{f}^n|_{{\rm Cl}(\partial V \setminus (\partial M_1 \cup \partial M_2))} = 
\mathrm{id}_{{\rm Cl}(\partial V \setminus (\partial M_1 \cup \partial M_2))}$. 
Let $c_i$ ($i=1,2$) be the core of the annulus $\partial V \cap \partial M_i $ 
$(= \partial X \cap \partial M_i)$. 
Then for some $m_1$, $m_2$ we can write $f^n$ in the form 
$f^n = \hat{f}^n|_{\partial V} = {T_{c_1}} ^{m_1} \circ {T_{c_2}} ^{m_2}$. 
If $m_i \neq 0$, this cannot be extended to the solid torus $M_i$. 
Hence $m_1 = m_2 = 0$. 
Therefore the order of $f$ is finite, which is a contradiction. 

Suppose that $M_0$ is a handlebody of genus two. 
Let $Y$ be the parallelism region of $A$ and $A'$ in $E(X)$. 
Since $Y$ is a solid torus, the arc $\tau$ is trivial in $Y$ by Gordon \cite{Gor87}. 
Let $l$ be the boundary circle of the cocore of the 1-handle $\Nbd (\tau)$. 
Note that $l$ is primitive with respect to the handlebody $M_0$. 
Set $\partial A = a_1 \sqcup a_2$, $\partial A' = a_1' \sqcup a_2'$ and $k = a_1 \cup a_1' \cup l$. 
Since $E(V)$ is boundary-irreducible, the relative handlebody $(M_0, k)$ is boundary-irreducible. 
If there exists a non-separating annulus in $(V, k)$, 
it remains to be non-separating in the 3-manifold obtained by attaching a 2-handle to $V$ along $l$, 
which is a solid torus. 
This is a contradiction. 
Now, the same argument as in Claim \ref{claim:Type 4-1} shows that the order of $f$ is finite, 
which contradicts Lemma \ref{lem:finite order}.


Finally, let $\mathcal{W} = \{ A \}$.
Let $M_0$, $M_1$ be the result of cutting $M$ off by the W-system $\mathcal{W} = \{A\}$, 
where $M_0 \cap \Nbd(\tau) \neq \emptyset$ for the simple arc $\tau$ in $E(X)$ with 
$V = X \cup \Nbd(\tau)$ (see the left-hand side in Figure \ref{fig:type3-2_claim}). 
We note that $\partial M_0$ is a closed surface of genus two.  
By the same argument as in Claim \ref{claim:Type 2-1}, $M_0$ is atoroidal. 
Thus by considering the characteristic compression body of $M_0$, 
we see that $M_0$ is either boundary-irreducible or a handlebody of genus two. 

Suppose that $M_0$ is boundary-irreducible. 
By Lemma \ref{prop-simple}, $(M_0 , \partial_0 M_0)$ is simple. 
Due to Proposition 27.1 of Johannson \cite{Jo2}, 
$\MCG(M_0, \partial_0 M_0)$ is a finite group. 
Thus, there exists a natural number $n$ with 
$\hat{f}^n|_{{\rm Cl}(\partial V \setminus \partial M_1)} = \mathrm{id}_{{\rm Cl}(\partial V \setminus \partial M_1)}$. 
Let $c$ be the core of the annulus $\partial V \cap \partial M_1 $ 
$(= \partial X \cap \partial M_1)$. 
Then for some $m$ we can write $f^n$ in the form 
$f^n = \hat{f}^n|_{\partial V} = {T_c} ^m$. 
If $m \neq 0$, then $c$ bounds a disk in $M$ by Theorem \ref{McC}. 
This contradicts the boundary-irreducibility of $M$. 
Hence $m = 0$. 
Therefore the order of $f$ is finite, which is a contradiction. 

Suppose that $M_0$ is a handlebody of genus two. 
Since $A$ is the cabling annulus for $X$, and the annulus $A$ separates $E(X)$ 
into $M_0 \cup \Nbd(\tau)$ and $M_1$, 
at least one of $M_0 \cup \Nbd(\tau)$ and $M_1$ is a solid torus. 

\vspace{1em}

\noindent
{\it Case $1$}: The case where $M_0 \cup \Nbd(\tau)$ is a solid torus and 
$M_1$ is the exterior of a non-trivial knot. 

This is impossible because, in this case, 
$ T := \partial (X \cup \Nbd(A)) \cap M_1$ is an essential torus in $M$ 
(see Figure \ref{fig:v_0} (i)), 
which contradicts the 
assumption that $M$ is atoroidal. 
\begin{center}
\begin{overpic}[width=11cm,clip]{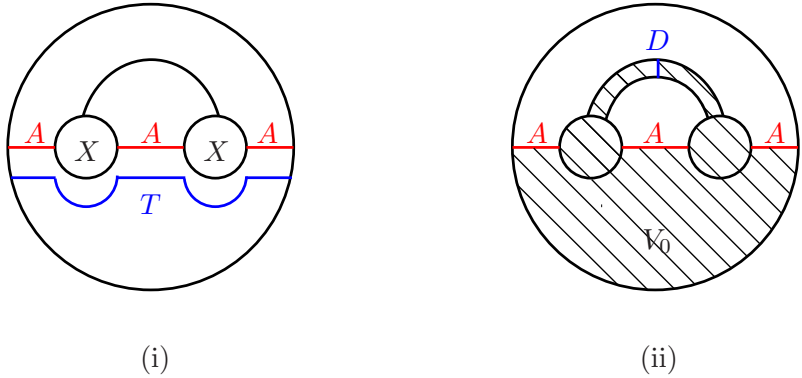}
  \linethickness{3pt}
  \put(57,0){(i)}
  \put(246,0){(ii)}
  \put(57,58){\color{blue} $T$}
  \put(247,45){$V_0$}
  \put(32,78){$X$}
  \put(81,78){$X$}
  \put(13,85){\color{red} $A$}
  \put(57,85){\color{red} $A$}
  \put(101,85){\color{red} $A$}
  \put(203,85){\color{red} $A$}
  \put(247,85){\color{red} $A$}
  \put(293,85){\color{red} $A$}
  \put(248,120){\color{blue} $D$}
\end{overpic}
\captionof{figure}{(i) The essential torus $T$; 
(ii) The handleobody $V_0$ of genus two.}
\label{fig:v_0}
\end{center} 

\vspace{1em}

\noindent
{\it Case $2$}: The case where $M_0 \cup \Nbd(\tau)$ is 
the exterior of a non-trivial knot and 
$M_1$ is a solid torus. 

Set $V_0 := M_1 \cup X \cup \Nbd(\tau)$ (see Figure \ref{fig:v_0} (ii)). 
Let $D$ be the cocore of the 1-handle $\Nbd(\tau)$. 
We note that now $S^3 = V_0 \cup M_0$ is a Heegaard splitting of genus two, and 
$D$ is a non-separating disk in $V_0$.
The solid torus $\Cl ( V_0 \setminus \Nbd(D))$ 
($= E(M_0 \cup \Nbd(A))$ is a regular neighborhood of a non-trivial knot in $S^3$. 
The disk $D$ is thus not a primitive disk in $V_0$ due to Lemma 2.1 of Cho \cite{Cho}. 
By the uniqueness of $\mathcal{W}$, $X$, and $A$, 
$\hat{f}$ preserves $V$, $M_1$ and $D$. 
Thus, we can think of $\hat{f}$ as an automorphism of $ (S^3, V_0, D)$. 
By Proposition 10.1 of Cho-McCullough \cite{Cho-McC}, 
$\MCG (S^3, V_0, D)$ is a finite group. 
This implies that for some $n$, $\hat{f}^n$ is isotopic to the identity 
via an isotopy preserving $V_0$ and $D$. 
Set $B := X \cap M_1$. $B$ is an incompressible annulus in $V_0$. 
Since $\hat{f}$ preserves $V$, $f$ preserves $B$ as well. 
Then clearly the above isotopy from $\hat{f}^n$ to the identity 
also preserves $B$.   
Consequently, $\hat{f}^n$ is isotopic to the identity on $\partial V$ and 
so the order of $f$ is finite, which is a contradiction. 

\vspace{1em}

\noindent
{\it Case $3$}: The case where both $M_0 \cup \Nbd(\tau)$ and $M_1$ 
are solid tori. 

Set $V_0 := M_1 \cup X \cup \Nbd(\tau)$ (see again Figure \ref{fig:v_0} (ii)). 
Let $D$ be the cocore of the 1-handle $\Nbd(\tau)$. 
The solid torus $\Cl ( V_0 \setminus \Nbd(D))$ 
($\cong E(M_0 \cup \Nbd(A))$ is now a regular neighborhood of the trivial knot in $S^3$. 
The disk $D$ is then a primitive disk in $V_0$ (see Lemma 2.1 of Cho \cite{Cho}).
We can again think of $\hat{f}$ as an automorphism of $ (S^3, V_0, D)$. 
By the argument in Lemma 5.5 of \cite{Cho-Koda}, 
$\hat{f}^2|_{V_0}$ is a composition of $\alpha$, $\beta$, $\gamma$ shown in Figure \ref{fig:beta_gamma}. 
\begin{center}
\begin{overpic}[width=11cm,clip]{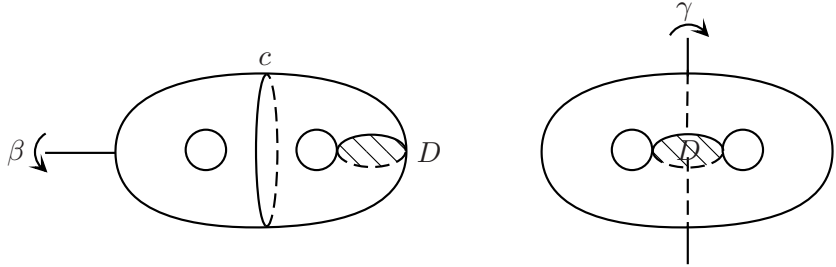}
  \linethickness{3pt}
  \put(-5,45){$\beta$}
  \put(248,100){$\gamma$}
  \put(90,81){$c$}
  \put(150,44){$D$}
  \put(248,45){$D$}
\end{overpic}
\captionof{figure}{The maps $\beta$ and $\gamma$.}
\label{fig:beta_gamma}
\end{center} 
Here $\alpha$ comes from the hyperelliptic involution of $\partial V$, 
$\beta$ is the half-twist of a handle, and $\gamma$ exchanges handles. 
Both $\alpha$ and $\gamma$ are order two elements, while 
the order of $\beta$ is infinite. 
We note that $\beta^2|_{\partial V_0}$ is the Dehn twist along 
a separating simple closed curve $c$ on $\partial V_0$. 
These elements satisfy 
$\alpha \circ \beta = \beta \circ \alpha$, $\alpha \circ \gamma = \gamma \circ \alpha$, 
and $\beta \circ \gamma =  \alpha \circ \gamma \circ \beta$.  
Therefore we can write $\hat{f}^2|_{V_0}$ in the form
\[
\hat{f}^2|_{V_0} = \alpha^{\epsilon_1}   \circ \gamma^{\epsilon_2} \circ \beta^n~
 (\epsilon_1, \epsilon_2 \in \{ 0,1\}, ~ n \in \Integer).  
\]
Thus we have $(\hat{f}^2|_{\partial V_0})^4 = {T_c}^{2n}$. 
Since $c \cap \partial A \neq \emptyset$, 
$n$ should be $0$. 
Consequently, for some $n$, $\hat{f}^n$ is isotopic to the identity 
via an isotopy preserving $V_0$ and $D$. 
The rest of the proof runs as in Case 2. 
\end{proof}

By Claims \ref{claim:Type 2-2}--\ref{claim:Type 3-2}, $M$ can contain no essential annuli, which is a contradiction. 

Consequently, $f$ is standardly extendable over $S^3$. 
This completes the proof of Theorem \ref{theo-main}. 

\section*{Acknowledgments} 
The authors wish to express their gratitude to 
Shicheng Wang and for helpful comments. 
They are also profoundly grateful to Mario Eudave-Mu\~noz for pointing out an error in the 
original draft.


\end{document}